\newtheorem{theorem}{Theorem}[section]
\newtheorem{claim}{Claim}[theorem]
\newtheorem{corollary}[theorem]{Corollary}
\newtheorem{lemma}[theorem]{Lemma}
\newtheorem{proposition}[theorem]{Proposition}
\newtheorem{question}[theorem]{Question}
\theoremstyle{definition}
\newtheorem{definition}[theorem]{Definition}
\theoremstyle{remark}
\newtheorem{remark}[theorem]{Remark}
\newenvironment{poclaim}
 {\begin{proof}[Proof of Claim~\theclaim]
  \expandafter\renewcommand\expandafter\qedsymbol\expandafter
   {\qedsymbol\enspace Claim~\theclaim}}
 {\end{proof}}
\numberwithin{equation}{section}
\newcommand{\RCA}{\mathsf{RCA}_0}
\newcommand{\WKL}{\mathsf{WKL}_0}
\newcommand{\WKLax}{\mathsf{WKL}}
\newcommand{\ACA}{\mathsf{ACA}_0}
\newcommand{\ATR}{\mathsf{ATR}_0}
\newcommand{\WWKL}{\mathsf{WWKL}_0}
\newcommand{\RAN}{\mathsf{RAN}}
\newcommand{\POS}{\mathsf{POS}}
\newcommand{\RT}{\mathsf{RT}}
\newcommand{\SRT}{\mathsf{SRT}}
\newcommand{\TT}{\mathsf{TT}}
\newcommand{\PA}{\mathrm{PA}}
\newcommand{\WPHP}{\mathrm{WPHP}}
\let\leq\leqslant
\let\geq\geqslant
\let\phi\varphi
\renewcommand{\exp}{\mathrm{exp}}
\let\sup\undefined
\let\inf\undefined
\DeclareMathOperator{\sup}{sup}
\DeclareMathOperator{\inf}{inf}
\let\emptyset\varnothing
\let\cross\times
\renewcommand{\times}{\mathbin{\cdot}}
\newcommand{\partref}[1]{\mbox{(\ref{#1})}}
\newcommand{\defeq}{%
 \mathrel{\vcenter{\baselineskip.7ex \lineskiplimit0pt
                   \hbox{\scriptsize.}\hbox{\scriptsize.}}}=%
}
\newcommand{\hyp}{\text{-}}
\newcommand{\defm}[1]{\emph{#1}}
\newcommand{\proves}{\vdash}
\newcommand{\nproves}{\nvdash}
\newcommand{\then}{\rightarrow}
\newcommand{\nsc}{\leftrightarrow}
\newcommand{\compl}{^{\mathrm c}}
\newcommand{\Img}{\mathrm{Im}}
\newcommand{\IN}{\mathbb{N}}
\newcommand{\QQ}{\mathbb{Q}}
\newcommand{\evalin}[1]{^{#1}}
\newcommand{\elemsub}{\preccurlyeq}
\newcommand{\elemext}{\succcurlyeq}
\newcommand{\Def}{\mathrm{Def}}
\newcommand{\Th}{\mathrm{Th}}
\newcommand{\lang}{\mathrm{L}}
\newcommand{\str}[1]{\mathfrak{#1}}
\newcommand{\thoo}[1]{\mathcal{#1}}
\newcommand{\WWKLax}{\mathsf{WWKL}}
\newcommand{\qeq}{\quad=\quad}
\newcommand{\fa}[1]{\forall{#1}\ }
\newcommand{\ex}[1]{\exists{#1}\ }
\newcommand{\exi}[1]{\ex{!#1}\ }
\newcommand{\falt}[2]{\forall{#1{<}{#2}}\ }
\newcommand{\exlt}[2]{\exists{#1{<}{#2}}\ }
\newcommand{\fale}[2]{\forall{#1{\leq}{#2}}\ }
\newcommand{\exle}[2]{\exists{#1{\leq}{#2}}\ }
\newcommand{\fage}[2]{\forall{#1{\geq}#2}\ }
\newcommand{\exge}[2]{\exists{#1{\geq}#2}\ }
\newcommand{\fain}[2]{\forall{#1{\in}{#2}}\ }
\newcommand{\exin}[2]{\exists{#1{\in}{#2}}\ }
\DeclareMathOperator*{\bigvvee}
 {\mathchoice{\bigvee\mkern-16.5mu\bigvee}
             {\bigvee\mkern-12mu\bigvee}
             {\bigvee\mkern-13mu\bigvee}
             {\bigvee\mkern-11mu\bigvee}}
\newcommand{\Tredeq}{\leq_{\mathrm{T}}}
\newcommand{\defd}{\mathclose\downarrow}
\newcommand{\undefd}{\mathclose\uparrow}
\newcommand{\restd}{\mathord{\upharpoonright}}
\newcommand{\Lone}{\lang_1}
\newcommand{\Ltwo}{\lang_2}
\newcommand{\PAminus}{\PA^-}
\newcommand{\ind}{\mathrm{I}}
\newcommand{\bd}{\mathrm{B}}
\newcommand{\IB}{\mathrm{IB}}
\newcommand{\PHP}{\mathrm{PHP}}
\newcommand{\Cd}{\mathrm{C}}
\newcommand{\tuple}[1]{\langle#1\rangle}
\let\seq\tuple
\newcommand{\card}[1]{\mathopen|#1\mathclose|}
\newcommand{\Ackin}[2]{#1\in #2}
\newcommand{\ee}{\mathrm{e}}
\newcommand{\cf}{\mathrm{cf}}
\newcommand{\cfsub}{\subseteq_\cf}
\newcommand{\Tr}{\mathrm{Tr}}
\newcommand{\Con}{\mathrm{Con}}
\newcommand{\Card}{\mathrm{Card}}
\newcommand{\num}{\mathsf{num}}
\newcommand{\floor}[1]{{\left\lfloor#1\right\rfloor}}
\newcommand{\UF}{\mathcal{U}}
\newcommand{\dname}[1]{{\setbox0=\hbox{$#1$}%
 \ooalign{\unhbox0\crcr\hidewidth\lower.5ex\hbox{$.$}\hidewidth}%
}}
\begin{document}

\title{Where Pigeonhole Principles meet K\"{o}nig Lemmas}

\subjclass[2010]{03B30, 03F35, 03F30, 03D32}

\author[Belanger]{David Belanger}
\address{Department of Mathematics: Analysis, Logic and Discrete Mathematics\\
Ghent University}
\email{david.belanger@ugent.be}

\author[Chong]{C.~T.~Chong}
\address{Department of Mathematics\\
National University of Singapore\\Singapore 119076}
\email{chongct@nus.edu.sg}

\author[Wang]{Wei Wang}
\address{Institute of Logic and Cognition and Department of Philosophy\\Sun Yat-Sen University\\Guangzhou, China}
\email{wwang.cn@gmail.com}

\author[Wong]{Tin Lok Wong}
\address{Department of Mathematics\\National University of Singapore\\Singapore 119076}
\email{matwong@nus.edu.sg}

\author[Yang]{Yue Yang}
\address{Department of Mathematics\\National University of Singapore\\Singapore 119076}
\email{matyangy@nus.edu.sg}

\thanks{Chong's research was partially supported by NUS grants C-146-000-042-001 and WBS : R389-000-040-101.
Wang was partially supported by China NSF Grant 11471342.
 Wong was financially supported by
 the Singapore Ministry of Education Academic Research Fund
  Tier~2 grant MOE2016-T2-1-019 / R146-000-234-112
   when this research was carried out.
All the authors acknowledge the support of JSPS--NUS grants
 R146-000-192-133 and R146-000-192-733 during the course of the work.}

\begin{abstract}
We study
 the pigeonhole principle for $\Sigma_2$-definable injections
  with domain  twice as large as the codomain, and
 the weak K\"onig lemma for $\Delta^0_2$-definable trees
  in which every level has at least half of the possible nodes.
We show that the latter implies the existence of $2$-random reals, and
 is conservative over the former.
We also show that the former is strictly weaker than
 the usual pigeonhole principle for $\Sigma_2$-definable injections.
\end{abstract}

\maketitle

\section{Introduction}
As Stephen Simpson maintained in his book~\cite{Simpson:2009.SOSOA},
 the goal of reverse mathematics is to investigate
  which set existence axioms are needed to prove  theorems of ordinary mathematics.
Since a great amount of ordinary mathematics can be formalized
  in the framework of second-order arithmetic through the process of coding,
 one may regard the set existence axioms
  to be those concerned with subsets of $\mathbb{N}$.
Our focus in this paper is on  the following slightly different question:
\begin{quote}
  Which elementary or first-order properties of $\mathbb{N}$ are needed
   to prove  theorems of ordinary mathematics?
\end{quote}
We study one instance of this very broad question.

To motivate this line of thought, we briefly review some recent developments in reverse mathematics.
Traditionally the most prominent axiom systems about the existence of subsets of $\mathbb{N}$
 are the so-called \emph{Big Five} systems,
  i.e., $\RCA$, $\WKL$, $\ACA$, $\ATR$ and $\Pi^1_1\hyp\mathsf{CA}_0$.
The weakest system~$\RCA$,
  whose principal constituents are the induction scheme for $\Sigma^0_1$-formulas and
   the comprehension scheme asserting the existence of all $\Delta^0_1$-definable sets,
 is usually taken---as we do in this paper---to be the base system.
Over $\RCA$, many important theorems in ordinary mathematics
 are known to be equivalent to one of the Big Five.
However, in the last two decades a growing body of exceptions have appeared:
 a number of theorems in ordinary mathematics were found to be inequivalent to any of the Big Five, and
 among them some  were proved to be independent of each other.
Classical computability-theoretic methods over the standard natural numbers
 have been popular and fruitful in driving this development.
They provide powerful tools for constructing models of the form $(\mathbb{N}, \thoo{S})$,
  where $\thoo{S}$ is a subset of the power set of $\mathbb{N}$,
 to establish  independence results.

Recently,
 the study of first-order strength of a given subsystem of second-order arithmetic
  has attracted much attention.
In such  studies,
 model-theoretic and proof-theoretic techniques come in naturally.
This approach has provided insights into reverse mathematics
 that the computability-theoretic approach does not.
It has introduced avenues for answering open questions in reverse mathematics,
  and in fact questions  not answerable using standard models $(\mathbb{N},\thoo{S})$, by  nature of the questions themselves.
The best-known examples are all concerned with Ramsey's theorem for pairs~($\RT^2_2$):
 first, the theorem of Chong, Slaman and Yang~\cite{Chong.Slaman.ea:2014.SRT}
  which separates $\RT^2_2$ from its stable version ($\SRT^2_2$);
 second, the theorem by the same authors \cite{Chong.Slaman.ea:2017}
  that  $\RT^2_2$ does not imply
   the induction scheme for $\Sigma^0_2$-formulas; and
 third, the theorem of Patey and Yokoyama~\cite{Patey.Yokoyama:2018}
  which says that all $\Pi^0_3$-consequences of $\RT^2_2$ are already provable in $\RCA$.

A particularly interesting aspect of the first example above is
 that the statements of $\RT^2_2$ and $\SRT^2_2$
  make no direct reference to the first-order properties of $\mathbb{N}$.
Conceivably, this independence result can be reproduced
 using classical computability-theoretic techniques.
On the other hand, there are a number of second-order statements
 whose strengths can only be understood by studying their first-order consequences.
An example is $\TT^1$,
 which is a tree version of the infinite pigeonhole principle
  asserting,  for every partition of~$2^{<\mathbb{N}}$ into finitely many parts,
   the existence of a monochromatic subtree isomorphic
   to the (infinite) perfect binary tree~$2^{<\mathbb{N}}$.
   Any standard model of $\RCA$
  is trivially  a model of~$\TT^1$,
 because in this case a monochromatic subtree can be computed from the partition.
Indeed, $\RCA + \ind\Sigma^0_2 \vdash \TT^1$.
However, Corduan et al.~\cite{Corduan.Groszek.ea:2010.TT1} proved that
 in the absence of $\ind\Sigma^0_2$
  a finite partition of $2^{<\mathbb{N}}$ may fail to compute
   a monochromatic subtree isomorphic to $2^{<\mathbb{N}}$.
As $\RCA + \TT^1 \vdash \bd\Sigma^0_2$ easily,
 the strength of $\TT^1$ lies between $\bd\Sigma^0_2$ and $\ind\Sigma^0_2$, and
  thus cannot be calibrated in the classical computability-theoretic setting.
Recently, Chong, Li, Wang and Yang~\cite{Chong.Li.ea:2019} proved that $\RCA + \TT^1 \nproves \ind\Sigma^0_2$.

\begin{figure}[tp]\centering
\begin{tikzpicture}[y=-1cm]
\node(2wwkl0)     at (0  ,0  ) {$2\hyp\WWKL$};
\node(bs02)       at (5.7,1  ) {$\bd\Sigma^0_2$};
\node(2wwkl0half) at (0  ,3.5) {$2\hyp\WWKL(1/2)$};
\node(s02wphp)    at (5.7,3.5) {$\Sigma^0_2\hyp\WPHP$};
\node(2ran)       at (0  ,5.5) {$2\hyp\RAN$};
\draw[->] (2wwkl0)    --node[right]{obvious}(2wwkl0half);
\draw[->] (2wwkl0half)--node[right]{Corollary~\ref{cor:2-WWKL-2-RAN}}(2ran);
\draw[->] (2wwkl0half)--node[above]{$\Pi^1_1$-conservative}
                        node[below]{Theorem~\ref{thm:FOT_2-WWKL-half}}(s02wphp);
\draw[->] (2wwkl0)    --node[above right]{Avigad, Dean, Rute~\cite{Avigad.Dean.ea:2012}}(bs02);
\draw[->] (s02wphp)   ..controls+(-125:1.3)and+(125:1.3)..
                        node{|} node[pos=.8,left]{Theorem~\ref{thm:WPHP-BSigma}}(bs02);
\draw[->] (bs02)      ..controls+(55:1.3)and+(-55:1.3)..
                        node[right]{\begin{tabular}{@{\!}l@{\!}}
                         Dimitracopoulos,\\
                         Paris~\cite{Dimitracopoulos.Paris:1986}
                        \end{tabular}}(s02wphp);
\draw[->] (2ran)      ..controls+(-180:2.3)and+(-180:2.3)..
                        node{|} node[pos=.55,right]{Slaman} (2wwkl0);
\end{tikzpicture}
\caption{Some relationships between various subsystems of second-order arithmetic
 investigated in this paper, over~$\RCA$}
\end{figure}
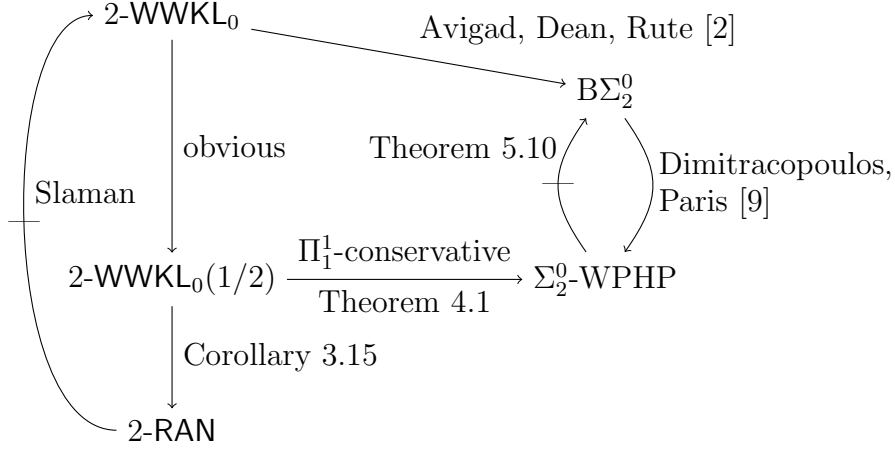
Another example is $2\hyp\WWKL(1/2)$, which we investigate in this paper.
It is related to measure theory and algorithmic randomness.
Recall the principle $\WWKL$, introduced by Yu and Simpson~\cite{Yu.Simpson:1990}, which states that if a binary tree $T$ satisfies
\begin{equation}\label{eq:WWKL}
  \exists m > 0 \forall n (|T \cap 2^n| > 2^n/m),
\end{equation}
where $2^n$ denotes both a natural number (on the right-hand side of the inequality) and the set of all binary strings of length $n$ (on the left-hand side) then there exists an $X \in [T]$, meaning $X$ is an infinite path through $T$.
This principle is known to be strictly weaker than $\WKL$, but independent of $\RCA$.
Avigad et al.\ considered in a more recent paper~\cite{Avigad.Dean.ea:2012}
 the analogue of~$\WWKL$ for $\Delta^0_n$-definable trees,
  which they call $n\hyp\WWKL$.
They showed that $2\hyp\WWKL$ is equivalent to
  a formalized version of the dominated convergence theorem in second-order arithmetic~($\mathsf{DCT}$), and
 $n\hyp\WWKL$ implies the existence of $n$-random reals~($n\hyp\mathsf{RAN}$).
In the same paper, Avigad et al.\ asked whether $n\hyp\RAN$ is equivalent to $n\hyp\WWKL$.
Slaman~[unpublished] answered their question in the negative.
By relativizing an argument in Ku\v{c}era~\cite[Lemma 3]{Kucera:85},
 one can show that, over the standard model $\mathbb{N}$,
  every $n$-random real computes some $X \in [T]$,
   whenever $T$ is a $\Delta^0_n$-definable tree satisfying~\eqref{eq:WWKL}.
Hence the use of classical computability-theoretic tools alone cannot answer the question of Avigad et al.

In this paper we improve on Slaman's result.
We introduce a principle called~$2\hyp\WWKL(1/2)$
 whose strength lies between those of $2\hyp\WWKL$ and~$2\hyp\RAN$.
It states that if $T$ is a $\Delta^0_2$-definable tree satisfying
\begin{equation}\label{eq:2-WWKL-half}
  \forall n (|T \cap 2^n| > 2^{n-1})
\end{equation}
then there exists an $X \in [T]$.
We prove that the first-order theory of $2\hyp\WWKL(1/2)$
 can be axiomatized by $\ind\Sigma_1$ plus the principle $\Sigma_2\hyp\WPHP$,
  which is a variant of the finite pigeonhole principle
   strictly weaker than $\bd\Sigma_2$.
As $2\hyp\WWKL$ implies $\bd\Sigma^0_2$~\cite[Theorem 3.7]{Avigad.Dean.ea:2012},
 we know $2\hyp\WWKL(1/2)$ is strictly weaker than $2\hyp\WWKL$.
From this we conclude that $\ind\Sigma_1+\Sigma_2\hyp\WPHP$ is an upper bound
  for the first-order theory of $2\hyp\RAN$.
We also prove that $\Sigma_2\hyp\WPHP$ is substantially different
 from the usual fragments of first-order arithmetic.

We organize this paper as follows.
In Section~\ref{s:basics},
 we introduce some basic notation and
    set up a few preliminary results about weak pigeonhole principles.
In Section~\ref{s:wwkl},
 we explore the notion of $\Delta^0_2$ trees in the absence of $\bd\Sigma^0_2$.
In particular,
 we define $2\hyp\WWKL(1/2)$, and
  verify that
   $\RCA + 2\hyp\WWKL(1/2)\proves\Sigma^0_2\hyp\WPHP\wedge2\hyp\RAN$ there.
In Section~\ref{s:fo-thy},
 we prove that $2\hyp\WWKL(1/2)$ is $\Pi^1_1$-conservative over $\RCA + \Sigma^0_2\hyp\WPHP$.
In Section~\ref{s:wph-fragments},
 we prove that $\Sigma_{n+1}\hyp\WPHP$ is strictly weaker than $\bd\Sigma_{n+1}$,
  but strictly stronger than the cardinality scheme for $\Sigma_{n+1}$ formulas ($\Cd\Sigma_{n+1}$).
We conclude in Section~\ref{s:further} with
 a discussion of the techniques developed
  in our study of pigeonhole principles, and
 a list of questions.

\section{Basics}\label{s:basics}

Let us start with some notational conventions.
The language ~\defm{$\Lone$} of first-order arithmetic
 has symbols $0,1,{+},{\times},{<}$ and a symbol for equality.
By convention,
 if $\str M,\str N,\str N_k,\dots$ are $\Lone$~structures,
  then their universes are always denoted by $M,N,N_k,\dots$ respectively.
The language ~\defm{$\Ltwo$} of second-order arithmetic
 has a first-order sort and a second-order sort,
  with a copy of~$\Lone$ on the first-order sort, and
   a symbol~$\in$ relating a first-order object to a second-order object.
We only consider $\Ltwo$~structures
 \begin{math}
  \str M=(M,\thoo S,0\evalin{\str M},1\evalin{\str M},
          {+}\evalin{\str M},{\times}\evalin{\str M},
          {<}\evalin{\str M},{\in}\evalin{\str M})
 \end{math}
 where the second-order universe~$\thoo S$
   is a collection of subsets of the first-order universe~$M$, and
  ${\in}\evalin{\str M}={\in}$.
Since there is no risk of ambiguity in this paper,
 we abbreviate an $\Ltwo$~structure $(M,\thoo S,\dots)$ as $(M,\thoo S)$.

Recall that $\PA^-$ is a finite set of axioms
 saying that the model is the non-negative half of a discretely ordered commutative ring.
The \defm{induction axiom} $\ind\varphi$ for a formula $\varphi(x,\vec{Y})$,
  where $\vec{Y}$ is a finite tuple of first- or second-order parameters,
 is the sentence
\[
  \forall \vec{Y} (\varphi(0,\vec{Y}) \wedge \forall x(\varphi(x,\vec{Y}) \to \varphi(x+1,\vec{Y})) \to \forall x \varphi(x,\vec{Y})).
\]
If $\Gamma$ is a set of formulas,
 then $\ind\Gamma$ denotes the \defm{induction scheme} for $\Gamma$,
  i.e.,~the collection of all $\ind\varphi$'s in which $\varphi \in \Gamma$.
The \defm{bounding axiom} $\bd\psi$ for a formula $\psi(w,x,\vec{Y})$ states
\[
  \forall \vec{Y}, u(\forall w < u \exists x \psi(w,x,\vec{Y}) \to \exists v \forall w < u \exists x < v \psi(w,x,\vec{Y})).
\]
If $\Gamma$ is a set of formulas,
 then $\bd\Gamma$ denotes the \defm{bounding scheme} for $\Gamma$,
 i.e.,~the collection of all $\bd\psi$'s such that $\psi \in \Gamma$.

Fix a $\Pi_2$~sentence $\exp$ which asserts the totality of exponentiation
 over~$\ind\Delta_0$.
One can expand a model $\str M\models\ind\Delta_0+\exp$
 with the function $x\mapsto2^x$ provided by~$\exp$.
We sometimes identify $\str M$ with this expansion.
From Gaifman--Dimitracopoulos~\cite[Theorem~3.3]{incoll:fragPA+MRDP},
 we know $\str M\models\ind\Delta_0(\exp)$,
  where \defm{$\Delta_0(\exp)$} denotes the smallest collection of formulas
   that contains all the atomic formulas \emph{in the expanded language}, and
    is closed under Boolean operations and bounded quantification.
We may even allow the exponential function to appear
 in the bounding terms here~\cite[Lemma~I.1.30]{book:hajek+pudlak}.

We say a subset $F\subseteq M$ is \defm{$\str M$-finite} or \defm{coded in $\str M$}
 if there is $c\in M$ such that
  \begin{equation*}
   F=\{x\in M:\str M\models\text{the $x$-th digit in the binary expansion of $c$ is $1$}\}.
  \end{equation*}
For example, the finite set $F=\{0,1,3\}$ is $\str M$-finite
  in all models $\str M\models\ind\Delta_0+\exp$,
 with code $c=2^0+2^1+2^3=11$.
In a model $\str M\models\ind\Delta_0+\exp$ many desirable properties of $\str M$-finite sets are available,
 for example, bounded $\Delta_0$~comprehension~\cite[Theorem~I.1.36]{book:hajek+pudlak},
  the pigeonhole principle for coded functions~\cite[Theorem~I.1.41(2)]{book:hajek+pudlak}, and
  the existence of cardinalities~\cite[Theorem~I.1.41(1)]{book:hajek+pudlak}.
Unless otherwise stated,
 we denote by $\card X$ the cardinality of a set~$X$ in this sense.
Recall that $\ind\Delta_0$ is already enough to prove
 the usual properties of the Cantor pairing function $(x,y)\mapsto\tuple{x,y}$.
Similarly, many properties of
 the usual sequence-coding function $(a_i:i<n)\mapsto\seq{a_i:i<n}$
  can be proved in $\ind\Delta_0+\exp$.
Note that both of these coding functions have $\Delta_0$~graphs, and
 each component of the object being coded
  is bounded above by the code, provably in $\ind\Delta_0$.

The axiom system $\RCA$ consists of $\PA^-$, $\ind\Sigma^0_1$ and
\begin{equation}\label{eq:RCA}
  \forall\vec Y\bigl(\forall x(\varphi(x,\vec Y) \leftrightarrow \psi(x,\vec Y))
   \to \exists Z \forall x(x \in Z \leftrightarrow \varphi(x,\vec Y))
  \bigr)
\end{equation}
for each pair $\varphi$ and $\psi$ of $\Sigma^0_1$ and $\Pi^0_1$ formulas.
From a computability-theoretic viewpoint,
 the scheme~\eqref{eq:RCA} tells us that
  if $\vec Y$ is a tuple of parameters from a model $(M, \thoo{S})\models\RCA$, and
    $\bigoplus\vec Y$ pointwise computes a total, binary-valued function $f\colon M \to 2$,
   then $f^{-1}(1) \in \thoo{S}$.

Since we will sometimes work with sets $Z$ outside of $\thoo S$,
 we must face a troublesome detail documented by Groszek and Slaman~\cite{Groszek.Slaman:1994}:
  if $\bd\Sigma^0_1$ does not hold relative to $Z$,
   then the `pointwise Turing reduction' described in line~(\ref{eq:RCA}) is not transitive,
    i.e.,~there are $X$ and $Y$
     such that $X$ is pointwise computable from $Y$ and $Y$ from $Z$,
      but $X$ is not pointwise computable from $Z$---in symbols,
     $X \in \Delta^{0,Y}_1$ and $Y \in \Delta^{0,Z}_1$, but $X \not \in \Delta^{0,Z}_1$.
For this reason we also introduce a stronger, `setwise' notion of Turing reduction:
 we write $X \Tredeq Y$ if
  the sequence $(X \upharpoonright n : n \in M)$, viewed as a function from $M$ to codes of finite sets,
   is pointwise computable from $Y$;
 here $X \upharpoonright n$ denotes the binary string representing the first $n$ bits of $X$.
In the language of \cite{Groszek.Slaman:1994}, such an $X$ is \emph{strongly recursive in $Y$}.

The classical Shoenfield Limit Lemma states that
 every total $\Delta^0_2$~function $F$
  can be approximated by a recursive total function~$F_0$
 in the sense that $F=\lim_s F_0(\cdot,s)$.
A formalization of it will be used all over this paper.
We formulate the Limit Lemma in the following very general form
 because we want to exploit this extra generality in Section~\ref{s:wph-fragments}.

\begin{lemma}[Limit Lemma]\label{limit-lemma}
Let $n\in\IN$.
For every $\theta(\bar x,y)\in\Sigma^0_{n+1}$,
 there exist $\eta(s,\bar x,y)\in\Sigma^0_n$
         and $\alpha(s,\bar x)\in\Pi^0_n$
  such that $\ind\Sigma^0_n$ proves
  \begin{align*}
   &\fa{\bar x}{\exi y{\theta(\bar x,y)}}\\
   &\then\fa{s,\bar x}{\exi y{\eta(s,\bar x,y)}}\\
   &\wedge\fa{\bar x,y}{\bigl(
     \theta(\bar x,y)
     \nsc\ex s{\fage ts{\eta(t,\bar x,y)}}
    \bigr)}\\
   &\wedge\fa{s,\bar x}{\bigl(
     \alpha(s,\bar x)
     \nsc\fage ts{\fa{y,y'}{\bigl(
      \eta(s,\bar x,y)
      \wedge\eta(t,\bar x,y')
      \then y=y'
     \bigr)}}
    \bigr)}.
  \end{align*}
Here the formula $\theta(\bar x,y)$ may contain undisplayed free variables,
 in which case we allow the same variables
  to appear free in $\eta(s,\bar x,y)$ and $\alpha(s,\bar x)$.
\end{lemma}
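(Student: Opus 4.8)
The plan is to prove the lemma by induction on $n$, exhibiting $\eta$ and $\alpha$ explicitly from a prenex normal form of $\theta$ (the statement is a scheme over $n$, so this is an ordinary induction in the metatheory). Two of the three conjuncts cost little. The first --- that $\eta$ defines a total function --- will be built into the shape of $\eta$, which asserts that $y$ is the first coordinate of the least pair passing a certain bounded search; totality and uniqueness then follow by bounded minimisation, available already in $\ind\Delta_0+\exp$. The $\alpha$-conjunct is, for $n\geq1$, essentially free: one takes $\alpha(s,\bar x)$ to be the right-hand side of its own biconditional, which after pushing negations inward is already $\Pi^0_n$ because $\eta$ occurs there only negatively (for the degenerate case $n=0$ a monotone approximation is used, and then ``stabilised by stage $s$'' collapses to a $\Delta_0$ condition under the hypothesis). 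Finally, the backward direction of the middle biconditional is automatic once the forward direction is known: two stabilisation values of a function must agree, so it follows from the forward direction together with $\fa{\bar x}\exi y\theta(\bar x,y)$. Hence the work is to produce $\eta\in\Sigma^0_n$ and to prove, in $\ind\Sigma^0_n$ and under the hypothesis, that the function $g$ defined by $\eta$ converges pointwise to the function $F$ defined by $\theta$.

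For the construction, write $\theta(\bar x,y)$ as $\exists z\,\pi(\bar x,y,z)$ with $\pi\in\Pi^0_n$. When $n=0$, $\pi$ is $\Delta_0$ and the classical Shoenfield search works outright: $g(s,\bar x)$ is the first coordinate of the least pair $\langle y,z\rangle\leq s$ with $\pi(\bar x,y,z)$, and $0$ if there is none. When $n\geq1$ one cannot afford to test $\pi$ at stage $s$, since $\pi\in\Pi^0_n$ is as complex as what we are trying to produce; instead one applies the lemma one level down --- in a relational form, approximating the characteristic function of a $\Pi^0_n$ set --- to obtain a stage-indexed $\Delta^0_{n-1}$ formula $p(t,\bar x,y,z)$ and an accompanying $\Pi^0_{n-1}$ ``settled'' predicate $\alpha_0(t,\bar x,y,z)$ such that, for each fixed $(\bar x,y,z)$, $\pi(\bar x,y,z)$ holds iff $p(t,\bar x,y,z)$ holds for all sufficiently large $t$. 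Then $g(s,\bar x)$ is the first coordinate of the least $\langle y,z\rangle\leq s$ with $p(s,\bar x,y,z)$, and $\eta(s,\bar x,y)$ expresses $g(s,\bar x)=y$. A routine check, using that $\Sigma^0_{n-1}$ and $\Pi^0_{n-1}$ are closed under bounded quantification (from $\bd\Sigma^0_{n-1}$, a consequence of $\ind\Sigma^0_n$), confirms $\eta\in\Sigma^0_n$.

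The verification follows the classical pattern. Fix $\bar x$, let $y_0$ be the unique witness and $z_0$ the least $z$ with $\pi(\bar x,y_0,z_0)$, so $p(t,\bar x,y_0,z_0)$ holds from some stage $s_0$ on. For every $\langle y,z\rangle$ with $y\neq y_0$ the uniqueness hypothesis makes $\theta(\bar x,y)$, hence $\pi(\bar x,y,z)$, false, so $p(t,\bar x,y,z)$ eventually fails; crucially, the finitely many such pairs below $\langle y_0,z_0\rangle$ have their failure-after-stabilisation stages bounded by a single $S$, by $\bd\Sigma^0_n$ applied to the $\Sigma^0_n$ statement $\ex t(\alpha_0(t,\bar x,y,z)\wedge\neg p(t,\bar x,y,z))$ as $\langle y,z\rangle$ ranges below $\langle y_0,z_0\rangle$. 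Putting $s^*=\max(s_0,S,\langle y_0,z_0\rangle)$, at every stage $t\geq s^*$ the pair $\langle y_0,z_0\rangle$ passes while every smaller pair with first coordinate $\neq y_0$ fails, and pairs with first coordinate $y_0$ are harmless, so $g(t,\bar x)=y_0$. This gives $\theta(\bar x,y)\then\ex s\fage ts\eta(t,\bar x,y)$, and the converse was disposed of above.

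The main obstacle is the complexity bookkeeping that makes the induction close. One must keep $\eta$ in $\Sigma^0_n$ rather than $\Sigma^0_{n+1}$ --- this is precisely why the $\Pi^0_n$ matrix $\pi$ must be treated by the lower-level limit lemma instead of being evaluated directly --- and one must be sure that the ``settled'' predicate handed back one level down is genuinely $\Pi^0_{n-1}$, for it is this that turns the crucial collection step into an instance of $\bd\Sigma^0_n$ rather than $\bd\Sigma^0_{n+1}$. For the same reason the induction has to be phrased for relations (approximating a $\Pi^0_n$ set, with its own ``settled'' predicate), not merely for functions, and the stated functional form is then recovered from the relational one by the least-witness search above. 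This relational form, carrying the auxiliary $\alpha$-predicate, is exactly the ``extra generality'' the statement packages, and it is what makes the step from level $n$ to level $n+1$ go through. The base case and the handling of undisplayed free variables are routine.
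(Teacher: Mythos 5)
Your overall strategy---the Shoenfield least-witness search over a stage-indexed approximation of the $\Pi^0_n$ matrix $\pi$, with $\alpha$ taken to be the right-hand side of its own biconditional---is the right one, and much of it is correct: for $n\geq1$ the $\alpha$-side really is $\Pi^0_n$ once $\eta\in\Sigma^0_n$ (since $\eta$ appears only under a negation), the converse of the middle biconditional does follow from the forward direction together with uniqueness, and the $\bd\Sigma^0_n$ collection step that bounds the failure stages of all ``competitor'' pairs below $\langle y_0,z_0\rangle$ is exactly the crucial use of induction. Note, however, that the paper itself does not give this argument at all for $n\geq1$: it simply cites H\'ajek--Pudl\'ak, Theorem I.3.2, and only spells out the $n=0$ case by explicit formulas. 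So you are attempting a genuinely different route, namely a self-contained proof.

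Within that route there is a real gap. You claim to obtain the stage-indexed approximation $p(t,\bar x,y,z)$, in $\Delta^0_{n-1}$, by ``applying the lemma one level down to the characteristic function of the $\Pi^0_n$ set'' $\pi$, together with a ``settled'' predicate $\alpha_0\in\Pi^0_{n-1}$. This does not typecheck. The graph of the characteristic function of a $\Pi^0_n$ set is $(\Pi^0_n\wedge y{=}1)\vee(\Sigma^0_n\wedge y{=}0)$, which is a Boolean combination of $\Sigma^0_n$ formulas and lies in $\Sigma^0_{n+1}$, not in $\Sigma^0_n$; the lemma one level down applies only to $\Sigma^0_n$ formulas, and the ``relational form'' you gesture at is never stated. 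Moreover, $\Delta^0_{n-1}$ is too strong: the natural approximation of $\pi=\forall u\,\rho$ with $\rho\in\Sigma^0_{n-1}$ by truncating the outer quantifier, $p(s,\cdot):=\forall u\leq s\,\rho$, lies in $\Sigma^0_{n-1}$ (using $\bd\Sigma^0_{n-1}\subseteq\ind\Sigma^0_n$) but not in $\Pi^0_{n-1}$, so not in $\Delta^0_{n-1}$. The good news is that this direct truncation both fixes the gap and removes the need for induction on $n$ entirely: $p$ is monotone decreasing in $s$, $\pi(\bar x,y,z)\leftrightarrow\forall s\,p(s,\bar x,y,z)$ outright, and ``least $\langle y,z\rangle\leq s$ with $p(s,\bar x,y,z)$'' is $\Sigma^0_{n-1}\wedge\Pi^0_{n-1}\subseteq\Sigma^0_n$, so $\eta\in\Sigma^0_n$ as required. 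With a monotone $p$ you also don't need a separate $\alpha_0$: ``eventually fails'' is simply $\exists t\,\neg p(t,\cdot)\in\Sigma^0_n$, and $\bd\Sigma^0_n$ bounds those $t$'s as you describe. One smaller slip: the least-witness minimisation that makes $\eta$ define a total function is over a $\Sigma^0_{n-1}$-definable bounded set, so it needs $\ind\Sigma^0_{n-1}$ (or $\ind\Pi^0_{n-1}$), not merely $\ind\Delta_0+\exp$ as you say---this is harmless since $\ind\Sigma^0_n$ is assumed, but worth getting right.
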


\begin{proof}
For the $n\geq1$ case, see Theorem~I.3.2 in H\'ajek--Pudl\'ak~\cite{book:hajek+pudlak}.
Note that $\bd\Sigma^0_n$ is sufficient for this case.
If $n=0$ and $\theta=\ex u{\theta_0}$ where $\theta_0\in\Sigma^0_0$,
 then we can define $\eta(s,\bar x,y)$ and $\alpha(s,\bar x)$ to be
  \begin{equation*}
   \bigl(y\leq s\wedge\exle us{\theta_0(u,\bar x,y)}\bigr)
   \vee\bigl(\fale{y',u}s{\neg\theta_0(u,\bar x,y')}\wedge y=s\bigr)
  \end{equation*}
  and
  \begin{math}
   \exle{y,u}s{\theta_0(u,\bar x,y)}
  \end{math} respectively.
\end{proof}

To formulate our pigeonhole principles,
 let us borrow the Erd\H{o}s--Rado arrow notation
\[
  \kappa \to (\lambda)^n_c,
\]
 which means that every $c$-coloring of the $n$-element subsets of $\kappa$
  admits a homogeneous subset of cardinality $\lambda$.

\begin{definition}
When $\Gamma$ is a class of functions,
\[
  \Gamma\colon x \to (z)^1_y
\]
 means that if $f\colon x\to y$ is in~$\Gamma$
 then $f$ is constant on a subset of $x$ of cardinality $z$.
Here every number~$x$ is identified with $\{v:v<x\}$ as in set theory.
\end{definition}

For example, in this notation,
 the usual $\Sigma^0_{n+1}$ Pigeonhole Principle
  can be written as $\forall x(\Sigma^0_{n+1}\colon x+1\to(2)^1_x)$.

We first prove several first-order properties of these partition relations.
The first-order results we present here can easily be relativized to the second-order setting.

As is well known,
 there is a level-by-level correspondence
  between the usual Pigeonhole Principle and the collection scheme.
\begin{theorem}[Dimitracopoulos and Paris \cite{Dimitracopoulos.Paris:1986}]\label{thm:B=PHP}
Over $\ind\Delta_0+\exp$,
 \begin{equation*}
  \forall x(\Sigma_{n+1}\colon x+1 \to (2)^1_x)
 \end{equation*} is equivalent to $\bd\Sigma_{n+1}$
 for all $n\in\IN$. \qed
\end{theorem}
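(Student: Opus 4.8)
The plan is to prove both directions over $\ind\Delta_0+\exp$, treating the statement as a scheme indexed by the standard number~$n$ and using external induction on~$n$. For the implication $\bd\Sigma_{n+1}\to\forall x(\Sigma_{n+1}\colon x+1\to(2)^1_x)$ the main tool is a comprehension principle: over $\ind\Delta_0+\exp+\bd\Sigma_{n+1}$, every subset of an interval $[0,x)$ that is simultaneously $\Sigma_{n+1}$- and $\Pi_{n+1}$-definable, with parameters, is coded. Granting this, let $f\colon x+1\to x$ be a function whose graph is $\Sigma_{n+1}$. Since $f$ is total and single-valued on $[0,x+1)$, one has $f(i)=w\nsc w<x\wedge\falt{w'}x(w'=w\vee f(i)\ne w')$, and $f(i)\ne w'$ is $\Pi_{n+1}$; hence the graph of $f\restd[0,x+1)$ is also $\Pi_{n+1}$, and is therefore coded. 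The ordinary pigeonhole principle for coded functions \cite[Theorem~I.1.41(2)]{book:hajek+pudlak} then yields distinct $i,i'\leq x$ with $f(i)=f(i')$, i.e.,~a homogeneous set of size~$2$.

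The comprehension principle is itself proved by external induction on~$n$. For $n=0$: if $A\subseteq[0,x)$ is $\Delta_1$, defined by $\ex y\sigma_0(i,y)$ and equivalently by $\fa y\tau_0(i,y)$ with $\sigma_0,\tau_0\in\Delta_0$, then $\falt ix\ex y(\sigma_0(i,y)\vee\neg\tau_0(i,y))$, so $\bd\Sigma_1$ supplies a bound~$v$ with $A\cap[0,x)=\{i<x:\exle yv\sigma_0(i,y)\}$, which is $\Delta_0$ and hence coded by bounded $\Delta_0$-comprehension \cite[Theorem~I.1.36]{book:hajek+pudlak}. The inductive step is the same computation one quantifier higher: given a $\Delta_{n+1}$ subset~$A$ of $[0,x)$ defined by $\ex y\sigma(i,y)$ with $\sigma\in\Pi_n$ and equivalently by $\fa y\tau(i,y)$ with $\tau\in\Sigma_n$, one applies $\bd\Pi_n$ --- equivalent to $\bd\Sigma_{n+1}$ over $\ind\Delta_0$ --- to $\falt ix\ex y(\sigma(i,y)\vee\neg\tau(i,y))$ to obtain a bound~$v$, and then $A\cap[0,x)$ is at once $\{i<x:\exle yv\sigma(i,y)\}$ and $\{i<x:\fale yv\tau(i,y)\}$, hence $\Delta_n$ over the bounded domain (using that $\Sigma_n$ and $\Pi_n$ are closed under bounded quantification, which follows from $\bd\Sigma_n$); the induction hypothesis then applies.

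For the converse I argue contrapositively, again by induction on~$n$: assuming $\neg\bd\Sigma_{n+1}$, I construct a $\Sigma_{n+1}$ injection $[0,x+1)\to[0,x)$ for some~$x$. If $\neg\bd\Sigma_n$ holds as well, the induction hypothesis already furnishes a $\Sigma_n$ injection of this shape, which a fortiori is $\Sigma_{n+1}$, so we may assume $\bd\Sigma_n$ (this covers the base case $n=0$ too, where nothing need be assumed). Fix $\pi\in\Pi_n$ and~$u$ witnessing the failure of $\bd\Pi_n$: $\falt wu\ex z\pi(w,z)$ holds, but there is no uniform bound on the witnesses as $w$ ranges over $[0,u)$. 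Modulo $\bd\Sigma_n$ one extracts from this a $\Sigma_{n+1}$-definable (indeed $\Delta_{n+1}$) total single-valued function $g\colon[0,u)\to M$ with cofinal range --- essentially the least-witness map, although obtaining a total function here calls for choosing the failing instance with some care, since the relevant least-element principle need not be available --- and the remaining task is to convert the cofinal behaviour of~$g$ into an honest injection of $[0,x+1)$ into $[0,x)$, with~$x$ assembled from~$u$ and the values $g(w)$. This last construction is, I expect, the \emph{main obstacle}: the sets one naturally reaches for --- the fibres of~$g$ and the proper initial segments of its range --- are exactly those that fail to be coded once $\bd\Sigma_{n+1}$ is absent, so they cannot simply be counted or enumerated, and the injection must instead be built through a careful interval/block coding, with~$x$ chosen so that membership in its graph stays a single $\Sigma_{n+1}$-formula. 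Keeping the quantifier complexity pinned at the $\Sigma_{n+1}$ level throughout this bookkeeping is the delicate point; by contrast the other direction, and the comprehension principle in particular, are routine once set up.
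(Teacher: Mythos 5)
The paper does not prove this theorem---it cites Dimitracopoulos and Paris and marks it with \qed---so your attempt has to stand on its own, not be matched against an in-paper argument.

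Your forward direction, $\bd\Sigma_{n+1}\Rightarrow\forall x\,(\Sigma_{n+1}\colon x+1\to(2)^1_x)$, is correct and is essentially the standard argument: a total $\Sigma_{n+1}$ function on a bounded domain has a $\Delta_{n+1}$ graph, bounded $\Delta_{n+1}$ sets are coded over $\ind\Delta_0+\exp+\bd\Sigma_{n+1}$ (your external induction on $n$ to establish this is fine, and your use of $\bd\Sigma_n$ for closure of $\Sigma_n$ and $\Pi_n$ under bounded quantifiers is the right amount of care), and the pigeonhole principle for coded functions then finishes.

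The converse is where the proposal stops being a proof. You correctly reduce to the case $\bd\Sigma_n\wedge\neg\bd\Sigma_{n+1}$, you correctly identify the target as a $\Sigma_{n+1}$ injection $x+1\to x$ built from a cofinal $\Delta_{n+1}$ map $g\colon[0,u)\to M$, and you correctly diagnose why the naive constructions fail: the fibres of~$g$ and the proper initial segments $\{w<u:g(w)<m\}$ are precisely the bounded $\Sigma_{n+1}$ sets that need not be coded once $\bd\Sigma_{n+1}$ is dropped. But the construction that circumvents this is the \emph{entire} mathematical content of this direction, and the proposal leaves it at ``I expect this to be the main obstacle\ldots\ the injection must instead be built through a careful interval/block coding.'' That is an accurate description of the difficulty, not a resolution of it; there is no argument here that would convince a skeptic that the coding can actually be carried out while keeping the graph $\Sigma_{n+1}$. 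There is also a second flagged-but-unresolved point earlier in the same direction: producing a total single-valued $\Delta_{n+1}$ witness map $g$ from the failing instance of $\bd\Pi_n$ needs more than $\bd\Sigma_n$, since the $\Pi_n$ least-element principle (i.e., $\ind\Sigma_n$) is not available over the stated base theory, and you acknowledge but do not close this gap either. Both points must be settled before this counts as a proof of the converse. For the record, the standard route (Dimitracopoulos--Paris, or H\'ajek--Pudl\'ak in their Chapter~IV treatment of collection versus pigeonhole) builds the injection by coding, for each element $m$ of the model, the $\str M$-finite set of indices $w<u$ whose $\pi$-witness has already appeared below~$m$, and checks in $\ind\Delta_0+\exp$ that these codes exist and that the resulting map into a fixed bounded interval is a $\Sigma_{n+1}$ injection; making that verification go through at the stated level of induction is exactly the delicacy you flag without resolving.
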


The next lemma, which can be viewed as a variant of Theorem~\ref{thm:B=PHP},
 offers a number of $\bd\Sigma_{n+1}$-like characterizations of the property
  $\Sigma_{n+1}\colon b \rightarrow (2)^1_a$.
\begin{lemma}\label{lem:WPH-vs-convergence}
Fix $n\in\IN$.
Suppose that $\str M \models \ind\Sigma_n+\exp$ and $a < b$ are elements of $M$.
The following are equivalent:
\begin{enumerate}
  \item \label{lem:WPHP-vs-convergence/1}
    $\str M\models\Sigma_{n+1}\colon b \to (2)^1_{a}$.
    \item \label{lem:WPHP-vs-convergence/2}
     If $\phi$ is a $\Pi_{n}$ formula and $\str M \models (\forall x < b) (\exists y) \varphi(x,y)$, then there exist an $\str M$-finite $A \subseteq b$ of size $|A| \geq a$ and an $\ell \in M$ such that
        $$
            \str M \models (\forall x \in A)(\exists y < \ell) \varphi(x,y).
        $$
    \item \label{lem:WPHP-vs-convergence/3}
     As above, but $\phi$ is $\Sigma_{n+1}$.
    \item \label{lem:WPHP-vs-convergence/4}
     For every $\Delta_{n+1}$ function $f\colon b \to M$ there exist an $\str M$-finite $A \subseteq b$ of size $|A| \geq a$ and $\ell\in M$ such that $f(x) < \ell$ for all $x\in A$.
    \item \label{lem:WPHP-vs-convergence/5}

 As above, plus $\{(x,f(x)) : x \in A\}$ is $\str M$-finite.
\end{enumerate}
\end{lemma}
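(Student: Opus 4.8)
The plan is to prove the chain (1)$\Rightarrow$(2)$\Rightarrow$(3)$\Rightarrow$(4)$\Rightarrow$(5)$\Rightarrow$(1), in which every link except (1)$\Rightarrow$(2) is comparatively routine. For (2)$\Rightarrow$(3), write a $\Sigma_{n+1}$ formula as $\phi(x,y)=\ex z\psi(x,y,z)$ with $\psi\in\Pi_n$ and apply (2) to the $\Pi_n$ formula obtained by contracting the pair $(y,z)$ into one witness via the pairing function; the resulting bound on the witness bounds $y$. For (3)$\Rightarrow$(4), apply (3) to $\phi(x,y)\defeq f(x)=y$, which is $\Sigma_{n+1}$ since $f$ is $\Delta_{n+1}$. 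For (5)$\Rightarrow$(1) we argue by contraposition: a $\Sigma_{n+1}$-definable injection $f\colon b\to a$ is in fact $\Delta_{n+1}$, so if the conclusion of (5) held for it we would get an $\str M$-finite $A\subseteq b$ with $\card A\geq a$ together with an $\str M$-finite code for the graph of $f\restd A$; since $a<b$, after adjoining to $A$ one point of $b\setminus A$ (if $A\neq b$) we obtain an $\str M$-finite injection on a set of size $>a$ with values below $a$, contradicting the pigeonhole principle for coded functions, which holds in $\ind\Delta_0+\exp$.

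For (4)$\Rightarrow$(5) the additional clause---that the restricted graph be $\str M$-finite---comes from the Limit Lemma. Apply Lemma~\ref{limit-lemma} to a total extension of $\theta(x,y)\defeq f(x)=y$ to get $\eta(s,x,y)\in\Sigma_n$ and $\alpha(s,x)\in\Pi_n$, where $\alpha(s,x)$ asserts that the stage-$s$ approximation of $f(x)$ given by $\eta$ has stabilised. Then $F(x)\defeq\min\{s:\alpha(s,x)\}$ is $\Delta_{n+1}$ (its graph is a Boolean combination of $\Sigma_n$ and $\Pi_n$ formulas), and applying (4) to the $\Delta_{n+1}$ function $x\mapsto\max(f(x),F(x))$ produces an $\str M$-finite $A\subseteq b$ with $\card A\geq a$ and a single $m$ with $f(x)<m$ and $F(x)<m$ for all $x\in A$. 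On $A$ the graph of $f$ then equals $\{(x,y):x\in A\wedge y<m\wedge\eta(m,x,y)\}$, a $\Sigma_n$-definable subset of the $\str M$-finite rectangle $b\cross m$, and so is $\str M$-finite by bounded $\Sigma_n$-comprehension, which follows from $\ind\Sigma_n$.

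The substantive step is (1)$\Rightarrow$(2). Given $\phi\in\Pi_n$ with $\str M\models\fa{x<b}\ex y\phi(x,y)$, apply the Limit Lemma to the least-witness function $f$ (extended totally), obtaining $\eta\in\Sigma_n$ and $\alpha\in\Pi_n$ as before. The sets $E_m\defeq\{x<b:\alpha(m,x)\}$ are $\str M$-finite by bounded $\Pi_n$-comprehension, increase with $m$, and have union $b$; and on each $E_m$ the value $f(x)$ coincides with that of the coded function $x\mapsto g_m(x)$ determined by $\eta(m,\cdot,\cdot)$, so $\bd\Sigma_n$ applied over the $\str M$-finite set $E_m$ supplies an $\ell_m$ with $f(x)<\ell_m$ for all $x\in E_m$. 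If $\card{E_m}\geq a$ for some $m$, then $A\defeq E_m$ and $\ell\defeq\ell_m$ witness (2). Otherwise $\card{E_m}<a$ for every $m$, and we derive a contradiction with (1). Put $F(x)\defeq\min\{m:\alpha(m,x)\}$ and let $\hat h(x)$ be the rank of $x$ in the linear ordering of $b$ by the pair $(F(x),x)$; one computes $\hat h(x)=\card{E_{F(x)-1}}+\card{(E_{F(x)}\setminus E_{F(x)-1})\cap[0,x)}$, whence $\hat h(x)<\card{E_{F(x)}}<a$, so $\hat h$ maps $b$ into $a$, and, being a rank in a linear order, $\hat h$ is injective. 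The crucial point is that $\hat h$ is $\Sigma_{n+1}$-definable: because $E_m$ is cut out of $b$ by the single $\Pi_n$ formula $\alpha$, and $\Sigma_n$ and $\Pi_n$ are provably closed under bounded quantification over $\ind\Sigma_n$, the relation ``$c$ codes $E_m$'' is $\Sigma_{n+1}$; substituting into the displayed formula the codes of $E_{F(x)-1}$ and $E_{F(x)}$, together with the $\Sigma_{n+1}$ graph of $F$, keeps ``$\hat h(x)=k$'' inside $\Sigma_{n+1}$. So $\hat h$ refutes (1), the second alternative is impossible, and (2) holds.

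I expect the complexity bookkeeping in (1)$\Rightarrow$(2) to be the main obstacle. Below $\bd\Sigma_{n+1}$ one has neither closure of $\Sigma_{n+1}$ under bounded quantification nor the ability to count $\Delta_{n+1}$-definable subsets of a bounded interval, so one cannot rank $b$ directly by the $\Delta_{n+1}$ function $f$; everything has to be routed through the $\Pi_n$-defined, $\str M$-finite approximations $E_m$. Two things in particular need care: presenting $E_m$ by a $\Pi_n$ (not merely $\Delta_{n+1}$) formula, so that its canonical code is $\Sigma_{n+1}$-definable; and ranking $x$ among all of $E_{F(x)}$---rather than merely recording its position within its own layer $E_{F(x)}\setminus E_{F(x)-1}$---so that $\hat h$ is genuinely injective while still staying below $a$.
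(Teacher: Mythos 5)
Your proof is correct, and it differs from the paper's in its handling of the two substantive implications. For $(\ref{lem:WPHP-vs-convergence/1})\Rightarrow(\ref{lem:WPHP-vs-convergence/2})$ the paper avoids the Limit Lemma entirely: it sets $m(x)=\tuple{x,y}$ with $y$ the least $\Pi_n$-witness, observes that each initial segment $\{v<b:m(v)<m(x)\}$ is $\str M$-finite by bounded $\Pi_n$-comprehension, and takes $f(x)$ to be its cardinality; this $f$ is a $\Sigma_{n+1}$ injection, so by $(\ref{lem:WPHP-vs-convergence/1})$ some $f(x)\geq a$, and that $x$ immediately hands over both $A$ and $\ell=m(x)$ with no case split. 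Your route instead approximates $f$ via Lemma~\ref{limit-lemma}, extracts the stabilisation sets $E_m$, and either finds a large enough $E_m$ or ranks $b$ by the lexicographic pair $(F(x),x)$; the injection $\hat h$ you build is essentially the paper's $f$ relative to a different well-ordering of $b$, and your complexity accounting for it is right (the graph of $F$, the formula ``$c$ codes $E_m$'', and the counting all sit in $\Delta_{n+1}$, hence $\Sigma_{n+1}$ after the existential over $m,c,c'$). Similarly for $(\ref{lem:WPHP-vs-convergence/4})\Rightarrow(\ref{lem:WPHP-vs-convergence/5})$, where the paper simply bounds the $\Delta_{n+1}$ function $g(x)=\min\{\tuple{y,z}:\psi(x,y,z)\}$ and reads the graph off a bounded $\Pi_n$ set, whereas you again invoke the Limit Lemma and bound the modulus alongside $f$; both work. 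And in $(\ref{lem:WPHP-vs-convergence/5})\Rightarrow(\ref{lem:WPHP-vs-convergence/1})$ your ``adjoin one extra point of $b\setminus A$'' trick is a small variant of the paper's ``$f(A)$ exhausts $a$, so a new point collides'' argument. In short: your decomposition and conclusions are sound, but you route everything through the Limit Lemma where the paper gets by with minimal-witness ranking, so the paper's version is somewhat lighter on machinery while yours makes the stabilisation structure explicit.
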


\begin{proof}
$(\ref{lem:WPHP-vs-convergence/1} \implies \ref{lem:WPHP-vs-convergence/2})$.
Define $m\colon b\to M$ by setting
 \begin{equation*}
  \text{$m(x) = \tuple{x,y}$ whenever $y$ is least s.t.\ $\str M \models\phi(x,y)$}.
 \end{equation*}
For each $x<b$,
 the set $\{v<b:m(v)<m(x)\}$ is $\str M$-finite by $\ind\Sigma_n+\exp$, and
 so it has a cardinality, say $f(x)$, from the point of view of~$\str M$.
This gives a function $f\colon b\to M$
 which is injective (since $m$ is injective) and $\Sigma_{n+1}$.
Hence $\Img(f)\not\subseteq a$ by~\partref{lem:WPHP-vs-convergence/1}.
If $x<b$ such that $f(x)=\card{\{v<b:m(v)<m(x)\}}\geq a$,
 then we can set $A=\{v<b:m(v)<m(x)\}$ and $\ell=m(x)$.

$(\ref{lem:WPHP-vs-convergence/2} \implies \ref{lem:WPHP-vs-convergence/3})$.
Write $\phi$ as $(\exists z)\psi(x,y,z)$, and apply $(\ref{lem:WPHP-vs-convergence/2})$ to $\psi$.

$(\ref{lem:WPHP-vs-convergence/3} \implies \ref{lem:WPHP-vs-convergence/4})$.
Suppose $f(x)=y$ is defined by the $\Sigma_{n+1}$~formula $\phi(x,y)$ in~$\str M$.
Then $M \models (\forall x < b)(\exists y) \phi(x,y)$,
 so $(\ref{lem:WPHP-vs-convergence/3})$ gives the required $A$ and~$\ell$.

$(\ref{lem:WPHP-vs-convergence/4} \implies \ref{lem:WPHP-vs-convergence/5})$.
Suppose $f(x) = y$ is defined by $(\exists z)\psi(x,y,z)$, where $\psi$ is $\Pi_n$.
Define
 \begin{equation*}
  g(x)=\min\{\tuple{y,z}\in M:\str M \models\psi(x,y,z)\}.
 \end{equation*}
Then $g$ is $\Delta_{n+1}$.
Apply $(\ref{lem:WPHP-vs-convergence/4})$ to get $A\subseteq b$ of size $\geq a$ and $\ell\in M$
 such that $x \in A$ implies $g(x) < \ell$.
Now the set in \partref{lem:WPHP-vs-convergence/5} is equal to
 \begin{equation*}
  \{(x,y) : \text{$x \in A$ and $\str M\models(\exists z < \ell)\psi(x,y,z)$}\},
 \end{equation*}
  which is $\Pi_n$ and bounded, and is therefore $\str M$-finite by $\ind\Sigma_n+\exp$.

$(\ref{lem:WPHP-vs-convergence/5} \implies \ref{lem:WPHP-vs-convergence/1})$.
Suppose $f\colon b \rightarrow a$ is a $\Delta_{n+1}$ injection.
Apply $(\ref{lem:WPHP-vs-convergence/5})$ to get an $A \subseteq b$ of size $\geq a$
 on which the graph $\{(x,y):\text{$x \in A$ and $f(x)=y$}\}$ of~$f$ is $\str M$-finite.
Through this and the injectivity of $f$, it follows that the image $f(A)$ also has size $\geq a$, so that $f(A)$ must equal the whole codomain $a$. But this is impossible, as $f$ is an injection and $A$ is a proper subset of the domain $b$. (As a closing sidenote: this is the only place where we use $a < b$.)
\end{proof}

Of particular interest to this paper is the pigeonhole principle
 for injections with domain twice as large as the codomain.

\begin{definition}[$\Gamma$ Weak Pigeonhole Principle]\label{dfn:WPH}
Given a class $\Gamma$ of functions, $\Gamma\hyp\WPHP$ is the statement
\[
  \forall x\geq1(\Gamma\colon 2x \to (2)^1_x),
\]
i.e.,~for no positive~$x$ is there an injection in~$\Gamma$
 from $2x$ to~$x$.
\end{definition}

It is natural to ask how the weak pigeonhole principle relates
 to other, similar statements about definable injections.
We start by observing:
 \begin{lemma}\label{lem:WPH-newlemma}
  Let $n\in\IN$ and $\str M\models\ind\Sigma_n+\exp$.
  Suppose we have a $\Sigma_{n+1}$-definable injection
   $f\colon b\to a$ in~$\str M$, where $a<b$.
  Then:
  \begin{enumerate}
   \item For each $d \in M$ there is a $\Sigma_{n+1}$ injection mapping $d b \rightarrow d a$.\label{part:WPH-newlemma/1}
   \item For each $c < a$ in $M$, either there is a $\Sigma_{n+1}$ injection mapping $b \rightarrow c$, or there is one mapping $(b - c) \rightarrow (a - c)$.\label{part:WPH-newlemma/2}
   \item For each nonzero, standard $m \in \mathbb{N}$, there is a $\Sigma_{n+1}$ injection mapping $\lceil b/m\rceil \rightarrow \lfloor a/m\rfloor$.\label{part:WPH-newlemma/3}
  \end{enumerate}
 \end{lemma}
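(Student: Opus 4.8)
The plan is to handle part~\partref{part:WPH-newlemma/1} by an explicit formula, part~\partref{part:WPH-newlemma/3} by a finite iteration of part~\partref{part:WPH-newlemma/2}, and part~\partref{part:WPH-newlemma/2}---the real content---by a case analysis. For part~\partref{part:WPH-newlemma/1}, fix $d\in M$ and write each $x<db$ uniquely as $x=qb+r$ with $q<d$ and $r<b$ (Euclidean division is available in $\ind\Delta_0+\exp$), and set $g(x)=qa+f(r)$. Since $f(r)<a$, we get $g(x)<(q+1)a\leq da$, so $g$ maps $db$ into $da$. If $g(x)=g(x')$ with $x=qb+r$ and $x'=q'b+r'$, then $qa+f(r)=q'a+f(r')$ with $f(r),f(r')<a$; uniqueness of division by $a$ gives $q=q'$ and $f(r)=f(r')$, whence $r=r'$ by injectivity of $f$, and so $x=x'$. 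The graph of $g$ is
\[
  \{(x,z):\exists q<d\,\exists r<b\,\exists y<a\,(x=qb+r\wedge f(r)=y\wedge z=qa+y)\},
\]
a bounded existential quantification of a conjunction of $\Delta_0$ formulas and the $\Sigma_{n+1}$ graph of $f$, hence $\Sigma_{n+1}$. (The case $d=0$ is trivial.)

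For part~\partref{part:WPH-newlemma/2}, look at the two preimages $C=\{x<b:f(x)<c\}$ and $D=\{x<b:f(x)\geq c\}$, which partition $b$: then $f\restd C$ injects $C$ into $c$, and $f\restd D$ injects $D$ into the interval $[c,a)$, of size $a-c$. Split on whether $C$ is $\str M$-finite. If it is, then $|C|\leq c$ (as $f\restd C$ injects the coded set $C$ into $c$, by the pigeonhole principle for coded functions), so $D=b\setminus C$ is $\str M$-finite with $|D|\geq b-c$; enumerating $D$ in increasing order, then applying $f$, then subtracting $c$, gives a $\Sigma_{n+1}$ injection of $|D|$ into $a-c$, which restricts to the required injection $(b-c)\to(a-c)$. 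If $C$ is not $\str M$-finite, the goal is a $\Sigma_{n+1}$ injection $b\to c$, and since $f\restd C$ embeds $C$ into $c$ it suffices to embed $b$ into $C$. Here I would use the Limit Lemma (Lemma~\ref{limit-lemma}) to write $C$ as an increasing union of $\str M$-finite sets $C_s$ depending $\Delta_n$-uniformly on $s$; the failure of $C$ to be $\str M$-finite is exactly that the $C_s$ never stabilise, and one tries to turn this overspill-type behaviour---unwound through the $\bd\Sigma_{n+1}$-style characterisations of Lemma~\ref{lem:WPH-vs-convergence}---into an injection of $b$ into $C$.

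For part~\partref{part:WPH-newlemma/3}, fix a nonzero standard $m$ and put $B=\ceil{b/m}$, $A=\floor{a/m}$; one checks $B>A$. If $a<m$, then $a$ is a standard natural number and there is no injection of $b>a$ into $a$, so the hypothesis is vacuous; hence assume $a\geq m$, i.e.\ $A\geq1$. Now iterate part~\partref{part:WPH-newlemma/2}, starting from $f\colon b\to a$: at stage $k$ we hold a $\Sigma_{n+1}$ injection $b_k\to a_k$ with $b_k=b-kA$, $a_k=a-kA$, $b_k>a_k$, and a routine floor/ceiling estimate (using that $m$ is standard) gives $B\leq b_k$ for $k\leq m-1$ and $A<a_k$ for $k\leq m-2$. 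Apply part~\partref{part:WPH-newlemma/2} with $c=A$: the first alternative yields a $\Sigma_{n+1}$ injection $b_k\to A$, which restricts to $B\to A=\floor{a/m}$ and we are done; the second yields a $\Sigma_{n+1}$ injection $b_{k+1}\to a_{k+1}$ and we continue. On reaching stage $m-1$ we have $a_{m-1}=(a\bmod m)+A$: if $m\mid a$ this is already $A$ and restriction finishes; if $m\nmid a$ one more application of part~\partref{part:WPH-newlemma/2} with $c=A$ must land in the first alternative, since its second alternative would be an injection of $b-mA$ into the standard-size set $a\bmod m$, impossible because $b-mA>a\bmod m$. As $m$ is standard this is a finite iteration, hence legitimate (part~\partref{part:WPH-newlemma/1} is not needed here).

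The main obstacle is the non-$\str M$-finite case of part~\partref{part:WPH-newlemma/2}: passing from ``$C$ is not $\str M$-finite'' to an honest $\Sigma_{n+1}$ injection of $b$ into $C$ while working below $\ind\Sigma_{n+1}$, so that least-number and eventual-stabilisation arguments are not available off the shelf and must be replaced by the approximation machinery of the Limit Lemma together with Lemma~\ref{lem:WPH-vs-convergence}. By contrast, parts~\partref{part:WPH-newlemma/1} and~\partref{part:WPH-newlemma/3} are routine once part~\partref{part:WPH-newlemma/2} is in hand.
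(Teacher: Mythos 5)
Your part~\partref{part:WPH-newlemma/1} is the same explicit formula the paper uses, and your part~\partref{part:WPH-newlemma/3} is essentially the paper's iteration-of-part-\partref{part:WPH-newlemma/2} argument (the paper iterates with $c=\lfloor a/m\rfloor$ up to $m$ times; your book-keeping with $b_k,a_k$ is just a more explicit version of the same thing). The problem is part~\partref{part:WPH-newlemma/2}, which you correctly identify as the real content and which you do not in fact prove.

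The gap is not merely a missing computation: your case split is the wrong one. You split on whether $C=f^{-1}([0,c))$ is $\str M$-finite, but that dichotomy is not aligned with the disjunction you need. When $C$ is not coded you try to produce a $\Sigma_{n+1}$ injection $b\to c$ by first injecting $b$ into $C\subseteq b$; but $C$ is a subset of $[0,b)$, so an injection $b\to C$ is itself a pigeonhole-violating map that must be manufactured from scratch, and non-codedness of $C$ gives no handle on it (non-codedness is a definability obstruction, not a cardinality statement). Worse, it can perfectly well happen that $C$ is not coded and yet there is still no $\Sigma_{n+1}$ injection $b\to c$; in that situation the second disjunct is the true one, but your argument never reaches it. The paper instead splits directly on the first disjunct: assume there is no $\Sigma_{n+1}$ injection $b\to c$, i.e.\ $\str M\models\Sigma_{n+1}\colon b\to(2)^1_c$, and then apply Lemma~\ref{lem:WPH-vs-convergence}\partref{lem:WPHP-vs-convergence/5} to $f$ itself to obtain an $\str M$-finite $C\subseteq b$ of size $c$ with $\{(x,f(x)):x\in C\}$ coded. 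Deleting $C$ from the domain and $f(C)$ from the codomain, and re-enumerating both via coded order-isomorphisms, yields the $\Sigma_{n+1}$ injection $(b-c)\to(a-c)$. That is the step your Limit-Lemma/overspill sketch was reaching for but never lands, and it is precisely what Lemma~\ref{lem:WPH-vs-convergence} was set up to deliver. Redo part~\partref{part:WPH-newlemma/2} along those lines; parts~\partref{part:WPH-newlemma/1} and~\partref{part:WPH-newlemma/3} can stay as they are.
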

 \begin{proof}
  For the first part, define $g\colon d  b \rightarrow d a$ by $g(q b + r) = q a+f(r)$ whenever $0 \leq r < b$.
  For the second part, assume that there is no $\Sigma_{n+1}$ injection mapping $b \rightarrow c$, i.e.,~that $\str M\models\Sigma_{n+1}\colon b \rightarrow (2)^1_c$. Then by Lemma~\ref{lem:WPH-vs-convergence}$(\ref{lem:WPHP-vs-convergence/5})$ there is an $\str M$-finite $C \subseteq b$ of size $c$ for which $f(C)$ is $\str M$-finite and also of size $c$. So deleting $C$ from the domain and $f(C)$ from the codomain gives us an injection mapping a set of size $b-c$ into one of size $a-c$.
For the third part, by~\partref{part:WPH-newlemma/2},
 either there is a $\Sigma_{n+1}$ injection mapping $b\to\floor{a/m}$,
     or there is one mapping $(b-\floor{a/m})\to(a-\floor{a/m})$.
If it is the former, we are already done;
 if it is the latter, apply \partref{part:WPH-newlemma/2} again
  to get either an injection $(b-\floor{a/m})\to\floor{a/m}$,
             or an injection $(b-2\floor{a/m})\to(a-2\floor{a/m})$.
As the reader can readily check, continuing in this way for up to $m$ steps,
 at some point we produce the required injection.
(This is where we use that $m$ is standard:
 nonstandardly many iterations of this sort would require stronger axioms
  in general.)
 \end{proof}

This has interesting consequences when the three parts work in concert. For example, given an injection mapping $b \rightarrow b/2$, if we let $c = b/4$, then the lemma's second part can provide (in either outcome) an injection mapping $(3/4)b \rightarrow b/4$. Then the lemma's first and third parts together yield an injection $b \rightarrow b/3$---domain the same as we started with, but codomain markedly smaller. By carefully extending this reasoning, one can prove both parts of the following.

\begin{lemma}\label{lem:WPH-basics}
Suppose $\str M \models \ind\Sigma_n+\exp$ where $n\in\IN$, and $b \in M$.
\begin{enumerate}
 \item The following are equivalent.\label{lem:WPH-basics/1}
  \begin{enumerate}
      \item $\str M\models\Sigma_{n+1}\colon 2b \to (2)^1_{b}$.
      \item $\str M\models\Sigma_{n+1}\colon b \to (2)^1_{rb}$ for some $r \in \mathbb{Q}$ strictly between $0$ and $1$.
      \item $\str M\models\Sigma_{n+1}\colon b \to (2)^1_{rb}$ for all $r \in \mathbb{Q}$ strictly between $0$ and~$1$.
  \end{enumerate}
  \item If $\str M\models\Sigma_{n+1}\colon 2b \to (2)^1_{b}$ then $\str M\models\Sigma_{n+1}\colon 2a \to (2)^1_{a}$\label{lem:WPH-basics/2} for all $a < b$. \qed
\end{enumerate}
\end{lemma}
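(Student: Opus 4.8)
The plan is to route both parts of Lemma~\ref{lem:WPH-basics} through a single ``self-improvement'' step extracted from Lemma~\ref{lem:WPH-newlemma}. Throughout, $\str M\models\ind\Sigma_n+\exp$ is fixed, and for a standard rational $r\in(0,1)$ I write $P(r)$ for the statement that $\str M$ has a $\Sigma_{n+1}$-definable injection from $b$ to $\floor{rb}$. (The only interesting case is $b$ nonstandard: if $b$ is standard then no $\Sigma_{n+1}$ injection $b\to\floor{rb}$ exists for $r<1$ and all the clauses below hold vacuously; likewise in part~\partref{lem:WPH-basics/2} we may assume $a\geq1$, and then the existence of a $\Sigma_{n+1}$ injection $2a\to a$ already forces $a$, hence $b$, to be nonstandard.) The crux is the claim that if $P(r)$ holds for one standard rational $r\in(0,1)$, then $P(r')$ holds for \emph{every} standard rational $r'\in(0,1)$.

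For $r'\geq r$ this is immediate by post-composing with the inclusion $\floor{rb}\hookrightarrow\floor{r'b}$, so the work lies in shrinking the codomain. Write $r=p/q$ with $p,q$ standard, let $f\colon b\to a$ be a $\Sigma_{n+1}$ injection with $a=\floor{rb}$, and apply Lemma~\ref{lem:WPH-newlemma}\partref{part:WPH-newlemma/1} with $d=p$ to get a $\Sigma_{n+1}$ injection $pb\to pa$, then Lemma~\ref{lem:WPH-newlemma}\partref{part:WPH-newlemma/3} with $m=q$ to convert it into a $\Sigma_{n+1}$ injection $\ceil{pb/q}\to\floor{pa/q}$. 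Since $\ceil{pb/q}\geq\floor{pb/q}=a$, restricting its domain to the initial segment $a$ gives a $\Sigma_{n+1}$ injection $a\to\floor{pa/q}=\floor{ra}$; composing this with $f$ (composition of $\Sigma_{n+1}$ functions with bounded range, hence again $\Sigma_{n+1}$) gives a $\Sigma_{n+1}$ injection $b\to\floor{r\floor{rb}}$, which, as $\floor{r\floor{rb}}\leq r^2b$, is in particular an injection $b\to\floor{r^2b}$. Thus $P(r)\Rightarrow P(r^2)$; applying this to the successive standard rationals $r,r^2,r^4,\dots$ gives $P(r)\Rightarrow P(r^{2^k})$ for every standard $k$, and since $0<r<1$ while $r'$ is a \emph{standard} positive rational, $r^{2^k}\leq r'$ for some standard $k$, whence $P(r^{2^k})\Rightarrow P(r')$ by the previous sentence. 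This proves the crux.

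Granting it, part~\partref{lem:WPH-basics/1} follows quickly. Lemma~\ref{lem:WPH-newlemma}\partref{part:WPH-newlemma/1} with $d=2$ turns an injection $b\to\floor{b/2}$ into one $2b\to b$, and Lemma~\ref{lem:WPH-newlemma}\partref{part:WPH-newlemma/3} with $m=2$ does the converse, so the negation of clause~(a) (the existence of a $\Sigma_{n+1}$ injection $2b\to b$) is precisely $P(1/2)$; since the crux makes all the $P(r)$ (for standard $r\in\QQ\cap(0,1)$) equivalent, clause~(b) (``$\neg P(r)$ for some such $r$''), clause~(c) (``$\neg P(r)$ for all such $r$'') and ``$\neg$(a)'' are pairwise equivalent. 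For part~\partref{lem:WPH-basics/2} I argue contrapositively: suppose $a<b$ and there is a $\Sigma_{n+1}$ injection $2a\to a$. Then Lemma~\ref{lem:WPH-newlemma}\partref{part:WPH-newlemma/3} with $m=2$ yields a $\Sigma_{n+1}$ injection $a\to\floor{a/2}$; writing $b=ka+s$ with $0\leq s<a$ (so $k\geq1$), Lemma~\ref{lem:WPH-newlemma}\partref{part:WPH-newlemma/1} with $d=k$ yields a $\Sigma_{n+1}$ injection $h\colon ka\to\floor{ka/2}$, and I glue these by setting $\varphi(x)=h(x)$ for $x<ka$ and $\varphi(x)=\floor{ka/2}+(x-ka)$ for $ka\leq x<b$. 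The case split is $\Delta_0$, so $\varphi$ is $\Sigma_{n+1}$; it is injective because its two pieces have ranges inside the disjoint intervals $[0,\floor{ka/2})$ and $[\floor{ka/2},\floor{ka/2}+s)$; and $\floor{ka/2}+s\leq ka/2+s\leq\tfrac{3}{4}(ka+s)=\tfrac{3}{4}b$ since $s<a\leq ka$. Hence $P(3/4)$ holds, so by part~\partref{lem:WPH-basics/1} there is a $\Sigma_{n+1}$ injection $2b\to b$, contradicting $\str M\models\Sigma_{n+1}\colon 2b\to(2)^1_b$.

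The floor/ceiling bookkeeping and the checks that each constructed map is $\Sigma_{n+1}$ (composition, restriction and $\Delta_0$ case splits all preserve this, and composing $\Sigma_{n+1}$ functions with bounded range does not raise the complexity) are routine. The one genuinely delicate point is that the iteration $P(r)\Rightarrow P(r^2)\Rightarrow\cdots$ must terminate after a \emph{standard} number of rounds: this is why part~\partref{lem:WPH-basics/1} quantifies over standard rationals only, and why Lemma~\ref{lem:WPH-newlemma}\partref{part:WPH-newlemma/3} is available only for standard $m$ --- nonstandardly many iterations would demand induction beyond $\ind\Sigma_n$. The other mildly awkward spot is part~\partref{lem:WPH-basics/2}, where one must carry an injection from the \emph{smaller} domain $a$ up to the \emph{larger} domain $b$; this cannot be done by restriction, and the decomposition $b=ka+s$ fed through Lemma~\ref{lem:WPH-newlemma}\partref{part:WPH-newlemma/1} is precisely what makes it work.
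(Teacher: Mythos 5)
Your proof is correct, and it follows the route the paper sketches in the paragraph immediately preceding the lemma (for which the paper supplies no separate written proof): iterate Lemma~\ref{lem:WPH-newlemma} a standard number of times to shrink the codomain proportionally, then translate back to domain $2b$ via parts~\partref{part:WPH-newlemma/1} and~\partref{part:WPH-newlemma/3}. Your self-improvement step $P(r)\Rightarrow P(r^2)$, built from parts~\partref{part:WPH-newlemma/1} and~\partref{part:WPH-newlemma/3} plus a composition, is a marginally cleaner iterate than the $b/k\mapsto b/(k+1)$ scheme the paper's prose gestures at, and your gluing construction in part~\partref{lem:WPH-basics/2} correctly supplies the ``careful extension'' that the paper leaves to the reader.
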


\section{The Weak Weak K\"onig Lemma}\label{s:wwkl}
We begin with a formalization of the principle $2\hyp\WWKL(1/2)$.
In a model $\str{M} = (M, \mathcal{S})$ of $\RCA$,
 a \emph{(binary) tree} is a set $T$ of $\str{M}$-finite binary sequences
  such that every initial segment of an element of $T$ is also an element of $T$.
If $T$ is a tree, then $[T]$ denotes the collection of ($\str{M}$-)infinite binary sequences
 all of whose $\str{M}$-finite initial segments are in $T$.
It is possible that $T \not\in \mathcal{S}$,
 or $[T] \not\subseteq \mathcal{S}$, or
 even $[T] \cap \mathcal{S}=\emptyset$.
If $M = \mathbb{N}$ and $T$ is a tree, then obviously $T \cap 2^n$ is $\str{M}$-finite for each $n \in M$,
 and the Lebesgue measure of $[T]$ is the limit of $|T \cap 2^n|/2^n$ as $n$ tends to infinity.
However, when $M\not=\IN$, the $T \cap 2^n$'s may not be $\str M$-finite.
Even if all the $T \cap 2^n$'s are $\str M$-finite,
 $\lim_n |T \cap 2^n|/2^n$ may not be as reasonable as in $\IN$.
So we have to impose some additional conditions on $T$ when we want to talk about the measure of $[T]$.
There are in fact several sets of such additional conditions,
 but as we shall see, the corresponding restrictions of $2\hyp\WWKL$ are equivalent to one another.

\begin{definition}\label{dfn:2-WWKL-half}
Let $T$ be a tree in a model $\str M \models \RCA$ and $r \in \QQ^{\str M}$.
We say that $\mu([T]) \geq r$
 if for each $n\in M$,
  there exists an $\str M$-finite $S \subseteq T \cap 2^n$
   such that $|S| \geq r 2^n$.

Define $2\hyp\WWKLax(x)$ to be a formula which expresses, over $\RCA$, that $x$ is a rational number (possibly nonstandard), and that
 \begin{equation*}
  \text{if $T$ is a $\Delta^0_2$ tree with $\mu([T])\geq x$
   then $[T]\neq\emptyset$.}
 \end{equation*}
For a positive $r\in\QQ$,
 let $2\hyp\WWKL(r)=\RCA+2\hyp\WWKLax(r)$.
\end{definition}

If $T$ is a $\Delta^0_2$ tree then $[T]$ is a $\Pi^0_2$ class.
So the principal axiom of $2\hyp\WWKL$ is an instance of the axiom $2\hyp\POS$,
 which states that every $\Pi^0_2$ class with positive measure is nonempty.
Avigad et al.~\cite{Avigad.Dean.ea:2012} proved
 that $2\hyp\WWKL$ is equivalent over $\RCA$ to $2\hyp\POS$, and
 that $2\hyp\WWKL$ implies $2\hyp\RAN$.
We will establish parallel results here.
Let us start with the definition of $\Pi^0_2$ classes within $\RCA$.

\begin{definition}
Fix $\str M=(M,\thoo S)\models\RCA$.
A \defm{$\Pi^0_2$ class} in $\str M$ is
 a subset $\thoo A$ of the power set of $M$
   (which we identify with the set $2^M$ of all functions $M\to2$)
  that can be written in the form
   $$\thoo A = \bigcap_{i \in M}\bigcup_{j \in M} \thoo B_{i,j},$$
 where each $\thoo B_{i,j}$ is a basic open set
  (meaning
   \begin{math}
   \thoo B_{i,j}
   = [\sigma_{i,j}]
   = \{X\in2^M :
       \text{$\sigma_{i,j}$ is an initial segment of $X$}
     \}
   \end{math}
   for some $\str M$-finite binary sequence $\sigma_{i,j}$), and
 where the function mapping $(i,j)$ to the code of $\sigma_{i,j}$ is in $\thoo S$.
We say $\thoo A$ is \emph{strictly presented}
 if $i \leq i'$ implies $\bigcup_{j < j'} \thoo B_{i,j} \supseteq \bigcup_{j < j'} \thoo B_{i',j}$
  for every $j'\in M$.
When $\thoo A$ is strictly presented, $\delta \in \QQ^{\mathfrak{M}}$, and $\mu(\bigcup_j \thoo B_{i,j}) \geq \delta$ for all $i\in M$,
 we write $\hat{\mu}(\thoo A) \geq \delta$.
\end{definition}
It is routine to check that every $\Pi^0_2$ class in a model of $\RCA$
  (except perhaps the empty class) can be written in strictly presented form, and
 hence can have its measure compared with rationals in this way;
 let us stress, however, that without $\bd \Sigma^0_2$,
  this measure may depend partially on the choice of the `presentation'.

\begin{definition}\label{dfn:2-POS-half}
Let $2\hyp\POS(x)$ be a formula which expresses, over $\RCA$, that $x$ is a rational number (possibly nonstandard), and that
  \begin{equation*}
   \text{if $\thoo A$ is a $\Pi^0_2$ class and $\hat{\mu}(\thoo A)\geq x$
    then $\thoo A\neq\emptyset$.}
  \end{equation*}
\end{definition}

Recall that if $T$ is a $\Delta^0_2$ tree then $[T]$ is a $\Pi^0_2$ class.
So we have two kinds of measure inequalities: $\mu([T]) \geq r$ and $\hat{\mu}([T]) \geq r$.
In Lemma \ref{lem:POS-implies-WWKL} below we will show that $\mu([T]) \geq r $ implies $\hat{\mu}([T]) \geq r$ over $\RCA$,
 and hence $\RCA \vdash 2\hyp\POS(r) \to 2\hyp\WWKL(r)$ for all positive $r \in \mathbb{Q}$; and later, in Theorem \ref{thm:WWKL=POS}, we obtain a strengthening and a converse.  But first, let us pause for a useful lemma about $\Delta^0_2$ trees.

One way to state the $n=1$ case of the Limit Lemma \ref{limit-lemma} is: Given a $\Delta^0_2$ set $A$ of natural numbers, there is a sequence of sets $\langle A_0, A_1,\ldots\rangle$ which converges pointwise to $A$ and is uniformly $\Delta^0_1$, i.e. the function $F(x,s)$ which $= 1$ if $x \in A_s$ and $=0$ if $x \not \in A_s$, is in the second-order part of the model. The following refinement of the Limit Lemma says that if, in addition, $A$ is (the set of codes of strings in) a binary tree, then the approximating sets can also be trees (i.e.\ can be closed under initial segment).

\begin{lemma}[$\RCA$] \label{lem:mono-apprx-tree}
 Every $\Delta^0_2$ tree $T \subseteq 2^{<M}$ is the limit of a uniformly $\Delta^0_1$ sequence of trees $\langle T_0, T_1, \ldots \rangle$.
\end{lemma}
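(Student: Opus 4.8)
The plan is to start from the Limit Lemma and then repair the naive, failing attempt. Apply Lemma~\ref{limit-lemma} (case $n=1$) to the characteristic function of $T$, obtaining a uniformly $\Delta^0_1$ sequence $\seq{A_s : s \in M}$ with $A_s \to T$ pointwise. The obvious candidate is to let $T_s$ be the largest subtree of $A_s \cap 2^{\leq s}$, i.e.\ $T_s = \{\sigma \in 2^{\leq s} : (\forall \tau \preceq \sigma)\; \tau \in A_s\}$; this is a uniformly $\Delta^0_1$ sequence of trees, and since $\sigma \in T_s$ forces $\sigma \in A_s$ it already converges correctly on every $\sigma \notin T$. It may fail, though, on a $\sigma \in T$ of nonstandard length: such a $\sigma$ has nonstandardly many initial segments, each eventually in $A$ for good, but the set of stages at which this happens need not be bounded---its boundedness amounts to an instance of $\bd\Sigma^0_2$---so there may be no single $s$ with all of them in $A_s$ at once, and then $\sigma$ never enters $T_s$. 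Thus the whole difficulty is getting the nonstandard-length members of $T$ into the approximating trees.

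To get around this I would trade the given presentation of $T$ for a tamer one. Write $\sigma \in T \leftrightarrow (\forall u)(\exists v)\,\psi_0(\sigma,u,v)$ with $\psi_0 \in \Sigma^0_0$ and, using that $T$ is a tree, set $\chi(\sigma,u,v)$ to be $(\forall u' \leq u)(\forall \tau \preceq \sigma)(\exists v' \leq v)\,\psi_0(\tau,u',v')$. Then $\chi \in \Sigma^0_0$ is \emph{monotone}---preserved under shortening $\sigma$, under decreasing $u$, and under increasing $v$---and still $\sigma \in T \leftrightarrow (\forall u)(\exists v)\,\chi(\sigma,u,v)$. So $T = \bigcap_k U_k$ is a decreasing intersection, where $U_k = \{\sigma : (\forall u \leq k)(\exists v)\,\chi(\sigma,u,v)\}$ is a tree by the prefix-monotonicity of $\chi$, and is $\Sigma^0_1$ by $\bd\Sigma^0_1$ (which is available in $\RCA$); and each $U_k$ carries a uniformly (in $k$ and the stage) $\Delta^0_1$, increasing approximation by trees, $U_{k,t} = \{\sigma \in 2^{\leq t} : (\forall \tau \preceq \sigma)(\forall u \leq k)(\exists v \leq t)\,\chi(\tau,u,v)\}$, with $\bigcup_t U_{k,t} = U_k$. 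The gain: for $\sigma \in U_k$ the least $t$ with $\sigma \in U_{k,t}$ is controlled by $\max\{v_\tau(u) : \tau \preceq \sigma,\ u \leq k\}$, where $v_\tau(u)$ is the least $v$ with $\chi(\tau,u,v)$---a \emph{$\Delta^0_1$} function, total on $U_k$---and this maximum over a \emph{bounded} index set is exactly what $\bd\Sigma^0_1$ delivers, whereas there was no analogue for the stabilization stages of the generic approximation.

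The approximating sequence is then built by a paced diagonalization: $T_s = U_{k(s),s}$, where the cutoff $k(s)$ is defined by a bounded search over $k \leq s$ that advances only when the finite part of the $U_k$-approximation visible at stage $s$ has been stable over a window of recent stages---so $k(s)$ ``waits'' for the relevant portion of each $U_k$ to settle before committing to it. If this is arranged so that $k(s) \to \infty$, then every $\sigma \notin T$ is eventually expelled, since its obstruction level $\min\{k : \sigma \notin U_k\}$ is finite; and for $\sigma \in T$, because $k(s)$ reaches a given level only once the entry stages $v_\tau(u)$ ($\tau \preceq \sigma$, $u \leq k$) have all been passed, the $\bd\Sigma^0_1$ bound above shows $\sigma$ lands in $U_{k(s),s}$ for good. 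I expect the crux to be pinning down the precise advancement clause for $k(s)$ and verifying that under it the sequence both converges on the nose (no spurious limit points) and captures every member of $T$---the point at which one must reason \emph{with}, rather than against, a possible failure of $\bd\Sigma^0_2$.
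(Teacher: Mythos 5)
Your diagnosis of the obstacle is exactly right: for a $\sigma\in T$ of nonstandard length, the entry stages of its nonstandardly many prefixes into the approximation $A_s$ need not be uniformly bounded without $\bd\Sigma^0_2$, so the naive $T_s$ may never capture $\sigma$. And the reformulation you propose---pass to a prefix-, $u$-, and $v$-monotone $\Sigma^0_0$ matrix $\chi$ so that $T=\bigcap_k U_k$ is a decreasing intersection of $\Sigma^0_1$ trees, each with an $s$-monotone $\Delta^0_1$ approximation $U_{k,s}$---is a clean and genuinely different setup from the paper's, and your observation that for each fixed $\sigma\in U_k$ the entry stage $v_\sigma(k)$ is a $\Delta^0_1$ function obtained via a coded max is correct.

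The gap is the closing ``paced diagonalization'' $T_s=U_{k(s),s}$, which you explicitly leave unfinished, and I do not believe it can be completed in that form. The difficulty is that a single global pacing $k(s)$ has to be slow enough for \emph{every} $\sigma\in T$, but the functions $k\mapsto v_\sigma(k)$ for varying $\sigma\in T$ have no common $\Delta^0_1$ envelope: for a path $\sigma_m$ of nonstandard length $m$ the witnesses can grow at an arbitrary $\Delta^0_1$ rate in $k$ and $m$, so $k(s)$ would have to stay below a threshold depending on $m$ for all $s$ below a threshold depending on $m$ and $k$---and which $\sigma$ are actually in $T$ is exactly the $\Delta^0_2$ information a $\Delta^0_1$ clause cannot see. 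Concretely, any advancement clause faces a dilemma: if it waits for all currently-visible strings to enter $U_{k+1,s}$, it stalls forever on any string in $U_k\setminus U_{k+1}$ (which is $\Pi^0_1$ and hence undetectable at a stage); if it times out or restricts the inspection window, it may advance past $v_\sigma(k)$ for a genuine $\sigma\in T$ outside the window, and $\sigma$ then drops out permanently. Making $k$ depend on $\sigma$ instead does not help either: for $T_s$ to remain a tree you need $k(\sigma,s)$ non-decreasing along $\preceq$, while convergence pushes in the opposite direction (longer $\sigma$ need a smaller $k$), so a length-keyed $k$ is ruled out.

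What the paper does instead is work directly with the given approximation $A_s$ and a per-string, per-stage modulus $m(\sigma,s)$ measuring how long $A_\cdot$ has been stable at $\sigma$, and puts $\sigma\in T_s$ iff every prefix $\tau\subseteq\sigma$ has a \emph{witness} extension $\rho\supseteq\tau$ with $\rho\in A_s$ and $m(\rho,s)\leq m(\tau,s)$. The witness mechanism is the point: a prefix that has not yet settled can be vouched for by a longer, better-settled comparison string, so the condition is local to each prefix rather than tied to a single global level, and the two directions of convergence are then handled by $\ind\Sigma^0_1$ and $\bd\Sigma^0_1$ respectively. Your framework would need an analogue of that local-witness idea; the form $T_s=U_{k(s),s}$ is too coarse to supply one.
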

With access to $\bd\Sigma^0_2$, this lemma would be immediate. Since only $\RCA$ is available, however, we resort to a `tame cuts'-style contruction.
\begin{proof}
Let $\langle A_0, A_1, \ldots\rangle$ be a uniformly $\Delta^0_1$ sequence of sets approximating $T$ as given by the Limit Lemma, and let
  $$m(\sigma,s) =\min\left\{r \geq |\sigma| : \forall t \left(r \leq t \leq s \implies [\sigma \in A_t \iff \sigma \in A_s]\right)\right\}.$$
In other words, $m$ is a stage-by-stage approximation to the usual modulus function. For each $s$, define a set $T_s$ of strings by:
 $$\sigma \in T_s \mathrm{~if~}(\forall \tau \subseteq \sigma)(\exists \rho \supseteq \tau) \left[\rho \in A_s \mathrm{~and~}m(\rho,s) \leq m(\tau,s) \right].$$
It is immediate from this definition that each $T_s$ is a tree. And since $m(\rho,s)$ is by definition always $\geq |\rho|$, to determine whether a given $\sigma$ is in $T_s$, we need only consider strings $\rho$ and $\tau$ of length $\leq s$; hence the trees are uniformly $\Delta^0_1$. It remains only to verify that they converge pointwise to $T$. For a given $\sigma \in 2^{<M}$, there are two cases to consider: either $\sigma$ is in $T$, or it is not.

 If $\sigma \in T$: Let $r \geq |\sigma|$ be least such that $t \geq r$ implies $\sigma \in A_t$. Since $T$ is downward closed, there is by $\ind \Sigma_1$ an $s \geq r$ such that for all $\tau \subseteq \sigma$ either $\tau \in A_{s}$ or $m(\tau, s) > r$. In particular, for any $t \geq s$ and any $\tau \subseteq \sigma$, we have either $\tau \in A_t$ or $m(\tau,t) > r$. Since $r = m(\sigma,t)$, this means $\sigma$ is in $T_t$ for all $t \geq s$.

 If $\sigma \not \in T$: Let $r \geq |\sigma|$ be least such that $t \geq r$ implies $\sigma \not \in A_t$. By $\bd\Sigma_1$ (and using the fact that $m(\tau,t)$ is always at least $|\tau|$) there is an $s \geq r$ such that for all $\tau \supseteq \sigma$, either $\tau \not \in A_{s}$ or $m(\tau,s) > r$. And in particular, for any $t \geq s$ and any $\tau \supseteq \sigma$, we have either $\tau \not \in A_t$ or $m(\tau,t) > r$. Since $r= m(\sigma,t)$, this means $\sigma$ is not in $T_t$ for any $t \geq s$.
\end{proof}
We put this lemma straight to work.
\begin{lemma}[$\RCA$]\label{lem:POS-implies-WWKL}
If $r \in \mathbb{Q}$ is positive and $T$ is a $\Delta^0_2$ tree with $\mu([T]) \geq r$ according to Definition \ref{dfn:2-WWKL-half}, then $[T]$ can be strictly presented as a $\Pi^0_2$ class $\mathcal{A}$ such that $\hat{\mu}(\mathcal{A}) \geq r$ according to Definition \ref{dfn:2-POS-half}.
\end{lemma}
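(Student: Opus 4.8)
The plan is to build the strict presentation of $[T]$ directly out of the monotone tree approximation, letting a single index do double duty as ``level'' and as ``stage''. First apply Lemma~\ref{lem:mono-apprx-tree} to get a uniformly $\Delta^0_1$ sequence $\langle T_0, T_1, \dots\rangle$ of trees converging pointwise to $T$. For each $k \in M$ set
\[
 \mathcal A_k \;=\; \bigl\{\,X \in 2^M : \exists t \geq k\ (X \upharpoonright k \in T_t)\,\bigr\},
\]
an open set: taking $j = \langle t,\sigma\rangle$ and putting $\mathcal B_{k,j} = [\sigma]$ whenever $t \geq k$, $|\sigma| = k$ and $\sigma \in T_t$ (the finitely many remaining indices at each level being filled in by a routine device that disturbs nothing below), we get $\mathcal A_k = \bigcup_j \mathcal B_{k,j}$, with $(k,j) \mapsto \mathrm{code}(\sigma_{k,j})$ being $\Delta^0_1$ in the parameter $\langle T_t : t \in M\rangle$, hence in $\mathcal S$. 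The candidate $\Pi^0_2$ class is $\mathcal A := \bigcap_k \mathcal A_k$.

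First, $\mathcal A = [T]$. If $X \in [T]$, then for each $k$ we have $X \upharpoonright k \in T$, so $X \upharpoonright k \in T_t$ for all sufficiently large $t$, in particular for some $t \geq k$; thus $X \in \mathcal A_k$ for every $k$. Conversely, if $X \in \mathcal A_k$ for all $k$, fix $n$: for every $k \geq n$ we get $t \geq k$ with $X \upharpoonright k \in T_t$, whence $X \upharpoonright n \in T_t$ because $T_t$ is a tree; so $X \upharpoonright n \in T_t$ for cofinally many $t$, and since $\lim_t T_t(X \upharpoonright n)$ exists it must be $1$, i.e.\ $X \upharpoonright n \in T$. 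Next, the presentation is strict. Suppose $k \leq k'$ and $X \in \mathcal B_{k',\langle t,\sigma\rangle}$ with $\langle t,\sigma\rangle < j'$ and $(t,\sigma)$ of the admissible form for $k'$, so $t \geq k'$, $|\sigma| = k'$, $\sigma \in T_t$ and $\sigma \subseteq X$. Then $\sigma \upharpoonright k \in T_t$ by the tree property, $t \geq k' \geq k$, and $\langle t, \sigma \upharpoonright k\rangle \leq \langle t, \sigma\rangle < j'$ since the pairing function and the string coding are monotone and a prefix has code no larger than the whole string; hence $X \in [\sigma \upharpoonright k] = \mathcal B_{k,\langle t,\sigma\upharpoonright k\rangle} \subseteq \bigcup_{j < j'} \mathcal B_{k,j}$. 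So $\bigcup_{j<j'}\mathcal B_{k,j} \supseteq \bigcup_{j<j'}\mathcal B_{k',j}$, as required.

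The remaining point, $\hat\mu(\mathcal A) \geq r$, i.e.\ $\mu(\mathcal A_k) \geq r$ for all $k$, is the crux. Fix $k$. Since $\mu([T]) \geq r$ in the sense of Definition~\ref{dfn:2-WWKL-half}, there is an $\str M$-finite $S \subseteq T \cap 2^k$ with $|S| \geq r 2^k$. Each $\sigma \in S$ lies in $T$, hence in $T_t$ for all large $t$, in particular $\exists t \geq k\,(\sigma \in T_t)$; so $\forall \sigma \in S\,\exists t\,(t \geq k \wedge \sigma \in T_t)$, where the matrix is $\Delta^0_1$ in our parameter sequence and $S$ is bounded by $2^k$. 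Now $\bd\Sigma^0_1$ — which $\RCA$ proves — yields a single $v$ with $\forall \sigma \in S\,\exists t \in [k,v)\,(\sigma \in T_t)$. Hence the $\str M$-finite set $J := \{\sigma \in 2^k : \exists t \in [k,v)\,(\sigma \in T_t)\}$, obtained by bounded $\Delta^0_1$ comprehension, contains $S$, so $|J| \geq r 2^k$, and $\bigcup_{\sigma \in J}[\sigma]$ is an $\str M$-finite union of pairwise disjoint cylinders contained in $\mathcal A_k$ of total measure $|J| \cdot 2^{-k} \geq r$. Since this finite union occurs as $\bigcup_{j<j'}\mathcal B_{k,j}$ for all large enough $j'$, we conclude $\mu(\mathcal A_k) \geq r$.

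I expect the main obstacle to be exactly this last step: marrying the ``limit'' information that drives the $\Pi^0_2$ presentation — each string eventually stabilizes in the $T_t$'s, with no modulus available in the absence of $\bd\Sigma^0_2$ — with the ``uniform-at-every-level'' hypothesis $\mu([T]) \geq r$, so that every finite stage of every slice $\mathcal A_k$ already carries measure $\geq r$. The key realization is that the uniformity one actually needs is only across the $\str M$-finite witness set $S$ at the fixed level $k$, so $\bd\Sigma^0_1$, rather than $\bd\Sigma^0_2$, suffices, which is precisely what keeps the argument inside $\RCA$. By contrast, checking $\mathcal A = [T]$, checking strictness, and the bookkeeping needed to realise each $\mathcal A_k$ as a genuine union $\bigcup_j[\sigma_{k,j}]$ of basic open sets (including filling in the degenerate start-up indices) are routine and do not interact with the measure computation.
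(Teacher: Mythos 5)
Your proof is correct and follows essentially the same route as the paper's: both invoke Lemma~\ref{lem:mono-apprx-tree}, index the basic open sets by a (stage, string) pair whose monotonicity along prefixes yields strictness, and deduce the measure bound from the $\str M$-finite witness set $S$ at each level (your $t \geq k$ restriction playing the role of the paper's ``cut the tops off''). If anything, your treatment of the measure step is slightly more careful than the paper's, in that you explicitly invoke $\bd\Sigma^0_1$ to bound the stage $v$ past which all of $S$ has stabilized, whereas the paper's phrasing (``if the tree's $i$-th level $T\cap 2^i$ has exactly $k$ elements'') glosses over the fact that $T \cap 2^i$ need not be $\str M$-finite without $\bd\Sigma^0_2$.
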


\begin{proof}
 Fix $\mathfrak{M} = (M,\mathcal{S}) \models \RCA$ and $T$ and $r$ as in the statement. If $T$ is bounded in height then $[T]$ is empty, its measure is zero, and the proof is trivial, so assume that it is not. Let $\langle T_0, T_1, \ldots\rangle$ be a uniformly $\Delta^0_1$ sequence of trees converging pointwise to $T$, as given by the previous Lemma.  Further assume, by cutting the tops off if necessary, that each $T_s$ is empty above level $s$. Our construction is as follows.

 Whenever $\sigma$ is in some $T_s$, select an unused $j$ and set $\thoo B_{|\sigma|,j} = [\sigma] = \{X \in 2^M  : \sigma$ is an initial segment of $X\}$. Some care is needed in selecting the $j$'s so that the resulting $\Pi^0_2$ class is strictly presented, but this is easily done: for instance, $j = \langle f(\sigma), s \rangle$ will do, where $f(\sigma) = 2^{|\sigma|} + \sum_{k < |\sigma|} \sigma(k)\cdot 2^k$. For all other $i,j$, just set $\thoo B_{i,j}$ to equal some other $\thoo B_{i,j'}$ (which may involve waiting, if no other $\thoo B_{i,j'}$ has yet been defined).

It remains to verify that $\mathcal{A} = \bigcap_i \bigcup_j \thoo B_{i,j}$ equals $[T]$, and has measure $\hat \mu(\mathcal{A}) \geq r$. The former claim is a consequence of the starting assumption that each $T_s$ be downard closed and of height $\leq s$. For the latter claim, it is enough to notice that if the tree's $i$-th level $T\cap 2^i$ has exactly $k$ elements, corresponding to a measure of $k/2^i$, then the union $\bigcup_j \thoo B_{i,j}$ contains at least $k$-many disjoint cylinders each of measure $2^{-i}$, totalling $\geq k/2^i$.
\end{proof}

\begin{corollary}[$\RCA$]\label{cor:POS-implies-WWKL}
$2\hyp\POS(r)$ implies $2\hyp\WWKLax(r)$ for all positive $r\in\QQ$.
\end{corollary}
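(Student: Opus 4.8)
The plan is to read this off directly from Lemma~\ref{lem:POS-implies-WWKL}, which was set up precisely for this purpose. Fix a standard positive rational $r$ and work in $\RCA + 2\hyp\POS(r)$. The clause of $2\hyp\WWKLax(r)$ asserting that $r$ is a rational number holds trivially since $r$ is a genuine (standard) rational, so it suffices to verify the substantive clause: if $T$ is a $\Delta^0_2$ tree with $\mu([T]) \geq r$ in the sense of Definition~\ref{dfn:2-WWKL-half}, then $[T] \neq \emptyset$.

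So let $T$ be such a tree. By Lemma~\ref{lem:POS-implies-WWKL}, $[T]$ can be strictly presented as a $\Pi^0_2$ class $\thoo A$ with $\hat{\mu}(\thoo A) \geq r$ in the sense of Definition~\ref{dfn:2-POS-half}. The hypothesis $2\hyp\POS(r)$ then applies to $\thoo A$ (its rationality clause again being trivial) and yields $\thoo A \neq \emptyset$. Since $\thoo A$ and $[T]$ are equal as collections of functions $M \to 2$ — this equality being part of the conclusion of Lemma~\ref{lem:POS-implies-WWKL} — we conclude $[T] \neq \emptyset$, which is exactly what $2\hyp\WWKLax(r)$ demands.

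There is essentially no obstacle here: the two preceding lemmas of this section have already done all the work, and the only thing worth tracking is the interplay between the two measure notions $\mu$ and $\hat{\mu}$, and between the two object notions (a $\Delta^0_2$ tree versus a strictly presented $\Pi^0_2$ class), as appearing in Definitions~\ref{dfn:2-WWKL-half} and~\ref{dfn:2-POS-half}. Lemma~\ref{lem:POS-implies-WWKL} is exactly the bridge across these, so no further appeal to $2\hyp\POS(r)$ beyond the single instance for $\thoo A$ is needed, and no induction or bounding is used beyond what is already packaged into that lemma.
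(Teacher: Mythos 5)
Your proposal is correct and follows exactly the paper's route: the paper's proof of this corollary is literally ``Just apply Lemma~\ref{lem:POS-implies-WWKL}.'' You have merely spelled out the one-line application in full detail, including the (correct) observation that the lemma's conclusion identifies $[T]$ with the strictly presented $\Pi^0_2$ class $\thoo A$ so that nonemptiness transfers back.
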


\begin{proof}
Just apply Lemma \ref{lem:POS-implies-WWKL}.
\end{proof}

To get the other direction, we look at $\Delta^0_2$ trees that behave regularly.

\begin{definition}\label{def:regular-trees}
A \defm{regular $\Delta^0_2$ tree}
  in a model $\str M=(M,\thoo S)\models\RCA$
 is a $\Delta^0_2$ tree $T$ over $\str M$
  such that $(T \cap 2^i: i < n)$ is $\str{M}$-finite for every $n\in M$.
\end{definition}

Such a regularity property does not come for free in a nonstandard world:
 if $\str M\not\models\bd\Sigma^0_2$,
  then it is not hard to produce a $\Delta^0_2$-definable function $F\colon M\to 2$ and $n\in M$
   such that $(F(i):i<n)$ is not $\str M$-finite.
One can avoid this irregularity, provably in $\RCA$,
 by passing on to a $\Delta^0_2$-definable non-decreasing cofinal sequence
  whose elements increase sufficiently rarely.

\begin{lemma}\label{lem:coded-sseq}
In a model $(M,\thoo S)\models\RCA$,
 if $F\colon M\to M$ is total $\Delta^0_2$,
  then there is a $\Delta^0_2$ sequence $(n_i)_{i \in M}$
   such that $i \leq j$ implies $i \leq n_i \leq n_j$ and
    the function $$j \mapsto \seq{F(n_i) : i < j}$$ is total $\Delta^0_2$.
\end{lemma}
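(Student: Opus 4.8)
The plan is to reduce the statement to a pigeonhole-free, \emph{modulus-domination} problem and then solve that by a stage-by-stage construction in the spirit of Lemma~\ref{lem:mono-apprx-tree}.

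First I would set up the modulus. Applying the Limit Lemma~\ref{limit-lemma} with $n=1$ to $F$, fix a uniformly $\Delta^0_1$ approximation $F_0(\cdot,s)$ with $F(x)=\lim_s F_0(x,s)$, and let $\mu(x)$ be the least $r\geq x$ such that $F_0(x,t)=F_0(x,r)$ for all $t\geq r$. Since $F$ is total the limit exists for every $x$, so $\ind\Sigma^0_1$ (via the least number principle) makes $\mu$ total; moreover ``$\mu(x)\leq v$'' is $\Pi^0_1$ and ``$\mu(x)=v$'' is $\Pi^0_1\wedge\Sigma^0_1$ (using $\bd\Sigma^0_1$), so $\mu$ is $\Delta^0_2$. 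It now suffices to produce a $\Delta^0_2$, strictly increasing sequence $(n_i)_{i\in M}$ with $n_0=0$ and $\mu(n_i)\leq n_{i+1}$ for all $i$: then $i\leq n_i$ automatically, and for $i<j$ we have $\mu(n_i)\leq n_{i+1}\leq n_j$, hence $F(n_i)=F_0(n_i,n_j)$; therefore $\seq{F(n_i):i<j}=\seq{F_0(n_i,n_j):i<j}$ is obtained primitive-recursively from the coded tuple $\seq{n_0,\dots,n_j}$, and the map $j\mapsto\seq{F(n_i):i<j}$ is total $\Delta^0_2$ as soon as $j\mapsto\seq{n_0,\dots,n_j}$ is.

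Next, the construction. Introduce the stage-by-stage modulus $m(x,s)=$ the least $r\geq x$ with $F_0(x,t)=F_0(x,s)$ for all $t$ in $[r,s]$; this is $\Delta^0_1$, non-decreasing in $s$, satisfies $m(x,s)\leq\mu(x)$, and $m(x,s)\to\mu(x)$. Define $n_0^s=0$ and $n_{i+1}^s=\max(n_i^s+1,\,m(n_i^s,s))$ (treating column $i{+}1$ as dormant at stage $s$ when $n_i^s\geq s$), and set $n_i=\lim_s n_i^s$. The recursion is self-regulating: each $n_{i+1}^s$ dominates $m(n_i^s,s)$, and once columns $0,\dots,i$ have settled to $n_0,\dots,n_i$, the value $n_{i+1}^s=\max(n_i+1,m(n_i,s))$ is non-decreasing in $s$ and bounded above by the element $\max(n_i+1,\mu(n_i))$---which exists because $\mu$ is total---so column $i{+}1$ settles too, with $n_{i+1}=\max(n_i+1,\mu(n_i))\geq\mu(n_i)$, as required. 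Strict monotonicity ($n_i\geq i$) is immediate from $n_{i+1}^s>n_i^s$.

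The main obstacle is \emph{totality}: converting the previous paragraph's ``once columns $\leq i$ settle'' into ``columns $\leq i$ do settle, for every $i\in M$''. Running this as an induction on $i$ would require $\ind\Sigma^0_2$, since ``columns $\leq i$ eventually stabilize'' is $\Sigma^0_2$ and $\ind\Sigma^0_2$ is not available over $\RCA$; indeed, iterating a general $\Delta^0_2$ function $M$-many times is not provably total in $\RCA$, so the modulus-domination shape is essential. Overcoming this is the heart of the argument, and I would handle it by a ``tame cuts''-style analysis modelled on the proof of Lemma~\ref{lem:mono-apprx-tree}: rather than inducting on the column index, one argues with $\ind\Sigma^0_1$ and $\bd\Sigma^0_1$ directly on the stage-by-stage data, using that $m(x,\cdot)\geq x$ converts every relevant unbounded search into a bounded one---so that, for a candidate tuple $\seq{a_0,\dots,a_i}$, the assertion ``$\mu(a_k)\leq a_{k+1}$ for all $k<i$'' is genuinely $\Pi^0_1$---and that each assertion ``$m(a_k,\cdot)$ has already reached its final value'' is $\Sigma^0_1$, so that $\bd\Sigma^0_1$ amalgamates the boundedly many such assertions ($k<i$) into a single stage at which the approximation on columns $\leq i$ is thereafter constant. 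This gives totality of $(n_i)$, and because ``$\seq{a_0,\dots,a_i}$ is a correct initial segment'' then decomposes as $\Pi^0_1\wedge\Sigma^0_1$, both $i\mapsto n_i$ and $j\mapsto\seq{n_0,\dots,n_j}$ are $\Delta^0_2$. As in the earlier lemma, the delicate part is extracting from $\ind\Sigma^0_1$ what looks on its face to need $\ind\Sigma^0_2$, and it is precisely the modulus-domination form of the recursion that makes this possible.
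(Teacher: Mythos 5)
Your reduction is the same as the paper's: apply the Limit Lemma to get $F_0$, introduce a modulus, and observe that it suffices to exhibit an increasing $\Delta^0_2$ sequence $(n_i)$ with $\mu(n_i)\leq n_{i+1}$, so that $F(n_i)=F_0(n_i,n_j)$ for $i<j$ and the coded tuple can be read off from the $n_i$'s. You also correctly identify totality as the heart of the matter, and make the crucial observation that ``$\mu(a_k)\leq a_{k+1}$ for all $k<i$'' is $\Pi^0_1$ once the tuple is given (because $\mu(x)\geq x$ turns the inner search bounded).

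Where the proposal falls short is in converting these observations into totality. Your $\bd\Sigma^0_1$-amalgamation step is genuinely a \emph{verification} argument: given a correct tuple $\langle a_0,\dots,a_i\rangle$, you can find a stage past which the stage-by-stage approximation matches it. But it does not produce the tuple, and the statement ``for every $i$ a correct tuple of length $i+1$ exists'' is $\Sigma^0_2$ — so the sentence ``this gives totality of $(n_i)$'' is precisely where $\ind\Sigma^0_2$ would be smuggled in. The Lemma~\ref{lem:mono-apprx-tree} template you invoke does not transfer: there the $\bd/\ind\Sigma^0_1$ searches range over substrings of the \emph{given} $\sigma$, hence are bounded in advance, whereas here the $n_0<\dots<n_i$ are what you are trying to construct and have no a priori bound. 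The paper closes this gap with a different mechanism: it works with the genuine ($\Pi^0_1$-graph) modulus $m$ rather than a stage-by-stage one, and proves by $\ind\Sigma^0_1$ that $\{\langle k,n\rangle:m^k(0)=n\leq i\}$ is $\str M$-finite for each $i$ — exploiting that $m(x)\leq i$ is $\Pi^0_1$ for $x<i$, so the restriction of $m$ to a bounded box is a coded finite partial function, whose iteration is unproblematic. From that coded set one reads off $\forall i\,\exists k\,(m^k(0)>i)$ and sets $n_i=\min\{m^k(0):m^k(0)>i\}$ directly, with no stage-by-stage machinery at all. You would need to add this bounded-coding step to make your construction total; as written, the argument has a gap at its central claim.
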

\begin{proof}
Work in $(M,\thoo S)$.
Let $F = \lim_s F_0(\cdot,s)$ as in the Limit Lemma~\ref{limit-lemma}.
Define the \defm{modulus function} $m\colon M\to M$ by
 \begin{equation*}
  m(x)=\min\{s>x:\fage ts{F_0(x,s)=F_0(x,t)}\}.
 \end{equation*}
Then $m$ is total and $\Pi^0_1$ (i.e.\ its graph $\{(x,s): m(x)=s\}$ is $\Pi^0_1$),
 as one can easily verify using $\ind\Sigma^0_1$.
If we write $m^k$ to mean $m$ composed with itself $k$ times,
 then the (possibly partial) function $k \mapsto m^k(0)$ is strictly increasing and $\Sigma^0_2$.
The domain of this function is clearly closed under successor.
Therefore,
 since $\ind\Sigma^0_1$ implies $\{\tuple{k,n}:m^k(0)=n\leq i\}$ is coded
   for every $i\in M$,
  we see that $\fa i{\ex k{(m^k(0) > i)}}$.
Define the sequence $(n_i)_{i \in M}$ by
 \begin{equation*}
  n_i=\min\{m^k(0):m^k(0)>i\}.
 \end{equation*}
Notice if $i,j,k,\ell\in M$
  such that $m^k(0)=n_i$ and $m^\ell(0)=n_j$, where $i<j$,
 then $m(n_i)=m^{k+1}(0)\leq m^{\ell+1}(0)=m(n_j)$ and
  so $F(n_i)=F_0(n_i,m(n_j))$.
A moment of thought then reveals $j\mapsto\tuple{F(n_i):i<j}$ is total and $\Delta^0_2$.
\end{proof}

The following proposition is a strengthening of Avigad et al.~\cite[Proposition 3.4]{Avigad.Dean.ea:2012},
 which is in turn a formalization of Kurtz~\cite[p.~21, Lemma 2.2a]{Kurtz:81.thesis},
  which is ultimately just an effective account of the regularity property of the Lebesgue measure on the Borel sets.
The only difference between our proposition and that in \cite{Avigad.Dean.ea:2012} is
 that we require only $\RCA$, rather than $\RCA + \bd \Sigma^0_2$;
  and the only real difference between our proof and that in \cite{Avigad.Dean.ea:2012} is
   that by more carefully defining a certain function $F$ we are able to replace an appeal to $\bd \Sigma^0_2$ with one to the lemma above.

\begin{proposition}[$\RCA$]\label{prp:subtree-Pi2}
If $\thoo A$ is a $\Pi^{0}_2$ class and $\hat{\mu}(\thoo A) \geq r > \delta>0$,
 then there exists a regular $\Delta^0_2$ tree $T$
  such that $[T] \subseteq\thoo A$ and $\mu([T]) \geq r - \delta$.
\end{proposition}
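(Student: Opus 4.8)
The plan is to mimic the classical proof of inner regularity of Lebesgue measure, working with a strictly presented $\Pi^0_2$ class $\thoo A=\bigcap_i\bigcup_j\thoo B_{i,j}$ and building, level by level, a $\Delta^0_2$ tree $T$ whose paths lie in $\thoo A$ and which at each level has measured content at least $r-\delta$. Fix a model $(M,\thoo S)\models\RCA$ and a presentation of $\thoo A$ as above. For each $i$, the open set $U_i=\bigcup_j\thoo B_{i,j}$ has $\mu(U_i)\geq r$, and by strictness $U_0\supseteq U_1\supseteq\cdots$ in the sense that $\bigcup_{j<j'}\thoo B_{i,j}\supseteq\bigcup_{j<j'}\thoo B_{i',j}$ whenever $i\le i'$. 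The idea is that $\thoo A=\bigcap_iU_i$, so one would like to take $T=\{\sigma:\forall i\ [\sigma]\subseteq\text{a large enough piece of }U_i\}$; the content lost at stage $i$ can be kept below $\delta 2^{-i}$ by choosing how many cylinders of the presentation of $U_i$ to wait for, and $\sum_i\delta 2^{-i}=\delta$ bounds the total loss, leaving measure $\geq r-\delta$ at every level.

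The key steps, in order, are as follows. First, for each $i$ use the $\str M$-finiteness of the function $(i,j)\mapsto\sigma_{i,j}$ (this is in $\thoo S$) together with $\ind\Sigma^0_1$ to define, for each $i$, a threshold $g(i)$ such that the $\str M$-finite union $V_i=\bigcup_{j<g(i)}\thoo B_{i,j}$ already has $\mu(V_i)\geq r-\delta 2^{-i-1}$; here one must argue that such a $g(i)$ exists, which uses that $\mu(U_i)\geq r$ means (via the definition of $\hat\mu$) that finite subunions get arbitrarily close to measure $r$ — a $\Sigma^0_2$-type search for $g(i)$. Second, since the map $i\mapsto g(i)$ is total $\Delta^0_2$ (it is defined by a $\Sigma^0_2$ search with a $\Pi^0_1$ verification, hence $\Delta^0_2$ on its graph after the usual manipulation), apply Lemma \ref{lem:coded-sseq} to $F=g$ to obtain a $\Delta^0_2$ non-decreasing cofinal sequence $(n_i)_{i\in M}$ with $i\le n_i$ and with $j\mapsto\seq{g(n_i):i<j}$ total $\Delta^0_2$. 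Third, define $$T=\{\sigma\in 2^{<M}:\forall i<|\sigma|\ ([\sigma\restd n_i]\subseteq V_{n_i})\},$$ or more precisely declare $\sigma$ of length $\ell$ to be in $T$ iff for every $i<\ell$ the cylinder $[\sigma\restd n_i]$ (if $n_i\le\ell$; otherwise the condition at level $i$ is checked against the finitely many extensions of $\sigma$ to length $n_i$) is contained in $V_{n_i}$. Because the coded sequence $\seq{g(n_i):i<j}$ is available, membership $\sigma\in T$ is a $\Delta^0_2$ condition, and downward closure is immediate. Fourth, verify $[T]\subseteq\thoo A$: a path $X$ through $T$ satisfies $[X\restd n_i]\subseteq V_{n_i}\subseteq U_{n_i}$ for all $i$, and since $(n_i)$ is cofinal and the $U_k$ are strictly nested, $X\in\bigcap_kU_k=\thoo A$. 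Fifth, verify $\mu([T])\geq r-\delta$ in the sense of Definition \ref{dfn:2-WWKL-half}: for a given level $n$, the $\str M$-finite set of length-$n$ strings forced out of $T$ is exactly those $\sigma$ some $[\sigma\restd n_i]$ escapes $V_{n_i}$; the measure removed at each such stage $i$ is at most $\mu(U_{n_i})-\mu(V_{n_i})\le\delta 2^{-n_i-1}\le\delta 2^{-i-1}$, and summing the geometric series gives total loss $<\delta$, so $|T\cap 2^n|\ge(r-\delta)2^n$ — and the witnessing $\str M$-finite subset $S\subseteq T\cap 2^n$ of that size can be read off $\str M$-finitely since everything in sight at level $n$ is coded. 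Finally, regularity: $(T\cap 2^i:i<n)$ is $\str M$-finite for each $n$ because, using the coded sequence from Lemma \ref{lem:coded-sseq}, membership in $T$ up to level $n$ depends only on the $\str M$-finite data $\seq{g(n_i):i<n}$ together with $\str M$-finitely many of the $\sigma_{k,j}$, so bounded $\Delta_0(\exp)$ comprehension applies.

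The main obstacle I expect is Step 2 — establishing, \emph{without} $\bd\Sigma^0_2$, that the function $i\mapsto g(i)$ (the least threshold making the finite subunion $\delta 2^{-i-1}$-close to full measure) is genuinely total and $\Delta^0_2$, and then feeding it correctly into Lemma \ref{lem:coded-sseq}. The subtlety is precisely the one flagged in the paragraph preceding the proposition: in a model failing $\bd\Sigma^0_2$ a $\Delta^0_2$ function need not have $\str M$-finite restrictions, so one cannot naively talk about "$\mu(V_i)$" uniformly in $i$; the fix is to define $F$ carefully (as the paper says) so that Lemma \ref{lem:coded-sseq} can be applied, after which the sparse cofinal sequence $(n_i)$ restores enough coding to push the measure bookkeeping through. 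Everything else — the geometric-series estimate, downward closure, $[T]\subseteq\thoo A$ via strictness — is routine once the right $\Delta^0_2$ sequence is in hand; the burden of the proof is entirely in arranging that sequence so that $\RCA$ suffices.
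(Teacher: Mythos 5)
Your overall plan is the paper's: define a cutoff function witnessing the $\hat\mu\geq r$ condition, feed it into Lemma~\ref{lem:coded-sseq} to obtain a sparse cofinal $\Delta^0_2$ sequence $(n_i)$ with coded restrictions, and use that coded data to build $T$ level by level with a geometric-series loss bounded by $\delta$. You also correctly identify that Lemma~\ref{lem:coded-sseq} is precisely what substitutes for $\bd\Sigma^0_2$ here.

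However, Step~3 as written has a genuine gap, and the measure estimate in Step~5 fails for it. The membership condition $[\sigma\restd n_i]\subseteq V_{n_i}$ is the wrong one: $V_{n_i}=\bigcup_{j<g(n_i)}[\sigma_{n_i,j}]$ with no control on the lengths $|\sigma_{n_i,j}|$, so demanding that the entire cylinder $[\sigma\restd n_i]$ lie inside $V_{n_i}$ is very strong. The set of $\sigma\in 2^n$ that fail it can have relative measure close to $1$ even when $\mu(U_{n_i})-\mu(V_{n_i})$ is tiny; for instance, if every $\sigma_{n_i,j}$ has length $\gg n_i$, then no $[\sigma\restd n_i]$ is contained in $V_{n_i}$ at all, yet the deficit $\mu(U_{n_i})-\mu(V_{n_i})$ can still be $<\delta\,2^{-n_i-1}$. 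So the claim that ``the measure removed at stage $i$ is at most $\mu(U_{n_i})-\mu(V_{n_i})$'' is false, and your $T$ could be empty at every nontrivial level. The correct condition, which the paper uses, is a \emph{comparability} condition: put $\tau\in T$ iff for every $i\leq|\tau|$ there is some $j$ below the cutoff with $\sigma_{i,j}$ comparable with $\tau$ (one extends the other). With that weaker condition the estimate does go through --- the set of $\tau\in 2^\ell$ comparable with some member of the truncated cover has relative size at least the measure of that cover --- and downward closure plus the identification of $[T]$ become immediate. (Two smaller deviations from the paper are harmless: the paper's $F(i)$ quantifies over all $i'\leq i$, which lets it truncate the $i$-th union at cutoff $F(n_i)$, whereas you truncate the $n_i$-th union at $g(n_i)$; and the paper normalizes $|\sigma_{i,j}|>i$. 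Either choice can work, but only once ``contained'' is replaced by ``comparable''.)
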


\begin{proof}
Work in a model $(M,\thoo S)\models\RCA$.
Let
$$
  \thoo A=\bigcap_i \bigcup_j \thoo{B}_{i,j}
$$ be a strictly presented $\Pi^0_2$ class,
 where $\thoo{B}_{i,j} = [\sigma_{i,j}]$ and
  the map $(i,j) \mapsto \sigma_{i,j}$ is $\Sigma^0_1$.
Moreover, we may assume that $|\sigma_{i,j}| > i$.
For each $i$, let
\begin{equation*}
 F(i) = \min\Bigl\{ k :
  \text{$\mu\Bigl(\bigcup_{j<\ell}\thoo B_{i',j}-\bigcup_{j<k}\thoo B_{i',j}\Bigr)<\frac\delta{2^{i+1}}$
        for all $\ell>k$ and all $i'\leq i$}
 \Bigr\}.
\end{equation*}
Then $F$ is a $\Delta^0_2$ total function by $\ind\Sigma^0_1$.
Let $(n_i)_{i \in M}$ be as in Lemma~\ref{lem:coded-sseq} for $F$.
Intuitively, we will define $T$
 such that $[T]=\bigcap_i\bigcup_{j<F(n_i)}\thoo B_{i,j}$.
This will ensure $[T]\subseteq\thoo A$ and $\mu([T])\geq r-\delta$
 if $\hat{\mu}(\thoo A)\geq r>\delta>0$ by the definition of~$F$.

Formally, we define a tree $T$ as follows.
For each binary sequence $\tau$ of length $\ell$,
 put $\tau \in T$ if and only if
  for each $i\leq\ell$ there is some $j<F(n_i)$
   such that $\sigma_{i,j}$ is comparable with $\tau$.
It follows straight from the definition that $T$ is a tree.
Moreover $T$ is a regular $\Delta^0_2$ tree
 because $T\cap 2^\ell$ can be computed uniformly
  from $\seq{F(n_i):i\leq\ell}$.
The rest is a simple exercise.
%
\end{proof}

From this lemma we can derive a partial reversal of Corollary \ref{cor:POS-implies-WWKL}.

\begin{proposition}[$\RCA$]\label{prp:WWKL-implies-POS}
If $q$ and $r$ are elements of $\QQ$ and $0 < q < r$
 then $2\hyp\WWKLax(q)$ restricted to regular $\Delta^0_2$ trees
  implies $2\hyp\POS(r)$. \qed
\end{proposition}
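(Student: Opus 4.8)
The plan is to derive this almost immediately from Proposition~\ref{prp:subtree-Pi2}, absorbing the measure lost in that proposition into the gap between $q$ and $r$. Work in a model $\str M\models\RCA$, and suppose $\thoo A$ is a (strictly presented) $\Pi^0_2$ class with $\hat\mu(\thoo A)\geq r$; the goal is to show $\thoo A\neq\emptyset$, which is exactly the content of $2\hyp\POS(r)$ for this $\thoo A$.

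First I would fix a rational $\delta\in\QQ^{\str M}$ with $0<\delta<r-q$ — concretely $\delta=(r-q)/2$ will do, and since $q>0$ we also have $\delta<r$, so that $\hat\mu(\thoo A)\geq r>\delta>0$, as Proposition~\ref{prp:subtree-Pi2} requires. Applying that proposition produces a regular $\Delta^0_2$ tree $T$ with $[T]\subseteq\thoo A$ and $\mu([T])\geq r-\delta$ in the sense of Definition~\ref{dfn:2-WWKL-half}. By the choice of $\delta$ we have $r-\delta=(r+q)/2>q$, and since the relation ``$\mu([T])\geq s$'' is clearly monotone in $s$, it follows that $\mu([T])\geq q$.

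Now I would invoke the hypothesis: $T$ is precisely a regular $\Delta^0_2$ tree with $\mu([T])\geq q$, so $2\hyp\WWKLax(q)$ restricted to regular $\Delta^0_2$ trees gives $[T]\neq\emptyset$. Hence $\thoo A\supseteq[T]\neq\emptyset$. As $\thoo A$ was an arbitrary $\Pi^0_2$ class of $\hat\mu$-measure $\geq r$, this establishes $2\hyp\POS(r)$.

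I do not anticipate a genuine obstacle here; the substantive work — the effective regularity-of-measure construction — has already been carried out in Proposition~\ref{prp:subtree-Pi2}. The only points deserving a moment's care are that Proposition~\ref{prp:subtree-Pi2} delivers a \emph{regular} tree (so that the \emph{restricted} form of $2\hyp\WWKLax(q)$ in the hypothesis really does apply), that the strict inequalities $r>\delta>0$ and $r-\delta>q$ hold for $\delta=(r-q)/2$ (using $0<q<r$), and that choosing such a $\delta$ and running this argument goes through inside $\str M$ even when $q$ and $r$ are nonstandard rationals — for which $\ind\Sigma^0_1$ is far more than enough. This also explains the ``\qed'' attached to the statement.
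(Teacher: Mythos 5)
Your proposal is correct and matches the paper's intended argument: the $\qed$ attached to the statement signals that Proposition~\ref{prp:WWKL-implies-POS} is meant to follow at once from Proposition~\ref{prp:subtree-Pi2} by taking $\delta$ (e.g.\ $\delta=(r-q)/2$) small enough that $0<\delta<r$ and $r-\delta>q$, and then applying the restricted $2\hyp\WWKLax(q)$ to the resulting regular $\Delta^0_2$ subtree. The only small remark is that $q,r\in\QQ$ here are standard rationals, so your caution about the nonstandard case is unnecessary, though harmless.
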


To prove a full reversal of Corollary~\ref{cor:POS-implies-WWKL},
  i.e.,~the equivalence of $2\hyp\POS(r)$ and $2\hyp\WWKLax(r)$ over $\RCA$,
 we use the Weak Pigeonhole Principle.
In the next lemma, let us establish our first connection
 between $2\hyp\WWKL(1/2)$ and $\Sigma^0_2\hyp\WPHP$,
  by a proof similar to Avigad et al's~\cite{Avigad.Dean.ea:2012} proof
    that $2\hyp\WWKL$ implies $\bd\Sigma^0_2$,
   or the classical proof that $\WKLax$ implies $\bd\Sigma^0_1$~\cite[Proposition~5]{art:feasthy-analysis}.

\begin{lemma}\label{lem:WWKL-WPH}
$2\hyp\WWKL(1/2)$ restricted to regular $\Delta^0_2$ trees
 proves $\Sigma^0_2\hyp\WPHP$.
\end{lemma}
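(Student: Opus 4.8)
I would argue by contraposition: assume $\str M\models\RCA$ together with $2\hyp\WWKLax(1/2)$ for regular $\Delta^0_2$ trees, and suppose toward a contradiction that $\str M$ contains a $\Sigma^0_2$ injection $f\colon 2b\to b$ with $b\geq 1$. The goal is to manufacture from $f$ a \emph{regular} $\Delta^0_2$ tree $T$ in $\str M$ with $\mu([T])\geq\tfrac12$ but $[T]=\emptyset$, which directly contradicts the principle. This follows the template of Avigad, Dean and Rute's proof that $2\hyp\WWKL$ implies $\bd\Sigma^0_2$ (and ultimately the classical proof that $\WKLax$ implies $\bd\Sigma^0_1$), the point being to weaken the conclusion to the \emph{weak} pigeonhole principle in exchange for only using the \emph{half}-measure fragment. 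Before building $T$ it is convenient to massage $f$ using Lemma~\ref{lem:WPH-basics} (e.g.\ passing freely between injections $2b\to b$ and injections $b\to rb$, $0<r<1$), so that the combinatorics below have a clean form.

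Next, apply the Limit Lemma~\ref{limit-lemma} (the $n=1$ case, as in the discussion preceding Lemma~\ref{lem:mono-apprx-tree}) to obtain a uniformly $\Delta^0_1$ sequence $\langle f_s\rangle_{s\in M}$ of functions $2b\to b$ converging pointwise to $f$, after clipping values to lie below $b$. Since I cannot use $\bd\Sigma^0_2$ to control how the $f_s$ stabilise, I would convert these approximations into \emph{monotone} modulus data exactly in the spirit of Lemmas~\ref{lem:mono-apprx-tree} and~\ref{lem:coded-sseq}: a $\Delta^0_2$ non-decreasing cofinal sequence of ``safe stages'' along which the pointwise approximations to $f$ have locally settled, together with the associated running-maximum modulus estimates $\tilde s(n,\ell)$, which are $\Delta^0_1$ uniformly in $(n,\ell)$, non-decreasing in $\ell$, and equal to the true modulus of $f$ at $n$ once $\ell$ is large enough. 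The tree $T$ is then built in stages so that it is automatically downward closed and each $(T\cap 2^i:i<n)$ is $\str M$-finite (so $T$ is regular); the rôle of the modulus data is precisely that $T$ reacts only to changes in the $f_s$ that have locally stabilised, so that no nonstandard ``avalanche'' of corrections can occur.

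The two things to verify are then: (i) $|T\cap 2^n|\geq 2^{n-1}$ for \emph{every} $n\in M$, via a level-by-level counting argument in which each pigeon is diagonally matched against its own coordinate of the potential path, so that it is exactly the surplus of pigeons over holes --- the ``twice as large'' hypothesis --- that is consumed and that pins the threshold at $1/2$; and (ii) any $X\in[T]$ would supply, through a fixed uniform reduction, enough stable-stage information to make $\{(n,f(n)):n<2b\}$ a bounded $\Delta^{0,X}_1$ set, hence (by $\ind\Sigma^0_1$ relative to the parameter $X$, which $\RCA$ provides) $\str M$-finite; but then $f\restd 2b$ is an $\str M$-finite injection of $2b$ into $b$, contradicting the pigeonhole principle for coded functions, which is already available in $\ind\Delta_0+\exp$ \cite[Theorem~I.1.41(2)]{book:hajek+pudlak}. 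Thus $[T]=\emptyset$ while $\mu([T])\geq\tfrac12$, contradicting $2\hyp\WWKLax(1/2)$. I expect the main obstacle to be item~(i) jointly with regularity: maintaining the level count $|T\cap 2^n|\geq 2^{n-1}$ and the codedness of the level sequence simultaneously at nonstandard $n$, with only $\RCA$ in hand --- this is exactly the ``tame cuts'' difficulty already confronted in Lemma~\ref{lem:mono-apprx-tree}, and the construction must be arranged so that the half-measure budget is never exhausted along any initial segment of any would-be path.
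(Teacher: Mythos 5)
Your high-level shape is right — argue by contraposition and manufacture a regular $\Delta^0_2$ tree of measure $\geq 1/2$ with $[T]=\emptyset$ — but the route you propose is both more complicated than necessary and, at the critical point, under-specified in a way that I do not think can be filled in as described. The paper's proof never invokes the Limit Lemma, monotone modulus data, or anything in the spirit of Lemma~\ref{lem:mono-apprx-tree}. Instead, the key move is to apply Lemma~\ref{lem:WPH-vs-convergence}\partref{lem:WPHP-vs-convergence/2} (in contrapositive form, with $n=1$) to the failure of $\Sigma^0_2\hyp\WPHP$ at $b$. This converts the $\Sigma^0_2$ injection into a $\Pi^0_1$ formula $\phi$ with $(\forall x<2b)(\exists y)\phi(x,y)$ but such that, for \emph{every} $\ell$, the set $\{x<2b:(\exists y<\ell)\phi(x,y)\}$ has size $<b$. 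Choosing $b$ so that $2b=2^k$ and identifying length-$k$ strings with pigeons $x<2b$, the tree is simply
\begin{equation*}
T \;=\; 2^{<k}\cup\bigl\{\sigma\in2^{\geq k}:(\forall y<|\sigma|)\neg\phi(\sigma\restd k,y)\bigr\}.
\end{equation*}
Because $\phi$ is $\Pi^0_1$, this $T$ is automatically regular $\Delta^0_2$ (each level is a bounded $\Pi^0_1$-definable set, $\str M$-finite by $\ind\Sigma^0_1$), the size bound on the witnessed pigeons gives $|T\cap2^\ell|>b\cdot2^{\ell-k}=2^{\ell-1}$ at every level, and there are no paths because every pigeon eventually has a witness.

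The genuine gap in your plan is in step (ii). You want a path $X\in[T]$ to \emph{code} the graph of $f$ (or the moduli of the approximations), so that $f\restd 2b$ becomes $\str M$-finite and contradicts the coded pigeonhole principle. But you never specify a tree for which this works, and it is not clear that one can encode $2b$ moduli along a single branch while simultaneously maintaining measure $\geq1/2$ at every level in the absence of $\bd\Sigma^0_2$: the branching needed to carry that information fights directly against the measure budget. The paper's path plays a much more modest role — it simply \emph{is} a pigeon with no witness, and the contradiction is with $(\forall x<2b)(\exists y)\phi(x,y)$, not with a coded pigeonhole principle applied to a finite object extracted from the path. In short: you are missing the reduction via Lemma~\ref{lem:WPH-vs-convergence}\partref{lem:WPHP-vs-convergence/2}, which both eliminates the Limit-Lemma machinery and replaces your (ii) with a one-line observation; without that reduction, the construction you sketch does not obviously close.
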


\begin{proof}
Fix a model $\str{M} = (M,\thoo{S}) \models \RCA + \neg \Sigma^0_2\hyp\WPHP$.
Then there is a $b \in M$ for which $\str M\not\models\Sigma^0_2\colon 2b \rightarrow (2)^1_b$.
By Lemma~\ref{lem:WPH-basics}(\ref{lem:WPH-basics/2}), the set of such $b$'s is closed upwards in $M$;
 so let us choose $b$ to be a power of $2$, say $2b = 2^k$. Identify the binary strings of length $k$ uniquely with the numbers $<2b$, e.g.\ by placing them in alphabetical order.

 Use Lemma \ref{lem:WPH-vs-convergence}(\ref{lem:WPHP-vs-convergence/2}) to fix a $\Pi^0_1$ formula $\phi$ such that $\str M \models (\forall x < 2b)(\exists y) \phi(x,y)$, but for every $\ell \in M$, the set
  $$\{x < 2b : \str M\models(\exists y < \ell)\phi(x,y)\}$$
 has size strictly less than $b$. Define a tree $T$ by
  $$T = 2^{< k} \cup \left\{\sigma \in 2^{\geq k} :  \str M\models(\forall y<|\sigma|)\neg\phi(\sigma \upharpoonright k,y)\right\}.$$
 Then $T$ is regular $\Delta^0_2$ and has measure $\geq 1/2$, but has no infinite paths. So $2\hyp\WWKL(1/2)$ fails.
\end{proof}

Our proof of the equivalence of $2\hyp\WWKLax(1/2)$ and $2\hyp\POS(1/2)$
  over $\RCA$
 invokes a weak form of the Lebesgue Density Theorem,
  which will also be useful in Section \ref{s:fo-thy}
   where we demonstrate the conservativity of
    $2\hyp\WWKL(1/2)$ over $\ind\Sigma_1+\Sigma_2\hyp\WPHP$.
\begin{definition}
If $T$ is a tree and $\sigma$ is a finite binary sequence,
 then let $T_\sigma=\{\tau:\sigma\tau\in T\}$.
\end{definition}
Fix $\str M = (M, \thoo S) \models \RCA$.
If $T$ is a tree and $\sigma$ is an $\str M$-finite binary sequence
 then $T_\sigma$ is also a tree, and is computable in $T$.
Moreover, $T_\sigma$ is a regular $\Delta^0_2$ tree if $T$ is.
If we work in a standard model---meaning $M = \mathbb{N}$---then
 by the Lebesgue Density Theorem every tree $T$ with $\mu([T]) > 0$
  has nodes $\sigma$ such that $\mu([T_\sigma])$ is very close to $1$.
This fails in general for $\Delta^0_2$ trees
  in the absence of $\ind\Sigma^0_2$ (see \cite{Chong.Li.ea:2019}),
 but with $\Sigma^0_2\hyp\WPHP$ it holds partially.

\begin{lemma}[Partial Lebesgue Density]\label{lem:WPH-partial-Density}
The following is provable in $\RCA+\Sigma^0_2\hyp\WPHP$ for all nonzero $n,m\in\IN$.
\begin{quote}
For any regular $\Delta^0_2$ tree $T$ of measure $\geq1/n$,
 there is a string $\sigma$ such that $T_\sigma$ has measure $\geq 1-1/m$.
Furthermore, given any $b$, we can ensure that $|\sigma| \geq b$.
\end{quote}
\end{lemma}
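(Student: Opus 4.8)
The plan is to argue by contradiction: assuming no suitable $\sigma$ exists, I will use $\Sigma^0_2\hyp\WPHP$ repeatedly to drive the relative measure $|T\cap 2^k|/2^k$ below $1/n$, contradicting $\mu([T])\geq 1/n$. Fix standard nonzero $n,m\in\IN$, a model $\str M=(M,\thoo S)\models\RCA+\Sigma^0_2\hyp\WPHP$, a regular $\Delta^0_2$ tree $T$ with $\mu([T])\geq 1/n$, and a bound $b$; since enlarging $b$ by a standard amount only strengthens the desired conclusion, assume $2^{1-b}<1/(2n)$. Suppose towards a contradiction that \emph{no} string $\sigma$ with $|\sigma|\geq b$ has $\mu([T_\sigma])\geq 1-1/m$. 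Because $T$ is regular, each $T\cap 2^k$ is $\str M$-finite, the map $k\mapsto\seq{T\cap 2^i:i\leq k}$ is a total $\Delta^0_2$ function, and $|T_\rho\cap 2^\ell|$ is a $\Delta^0_2$ function of the $\str M$-finite string $\rho$ and of $\ell\in M$. Since $|T_\sigma\cap 2^k|/2^k$ is non-increasing in $k$ with infimum $\mu([T_\sigma])$, the contradiction hypothesis says: for every $\rho$ with $|\rho|\geq b$ there is a $k$ with $|T_\rho\cap 2^k|<(1-1/m)2^k$; and regularity turns $\mu([T])\geq 1/n$ into $|T\cap 2^k|\geq 2^k/n$ for all $k$.

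The core is one \emph{harvesting} step. Put $\epsilon=1/m$, fix a level $b'\geq b$, and write $t=|T\cap 2^{b'}|\geq 2^{b'}/n\geq 1$. Define $g\colon T\cap 2^{b'}\to M$ by letting $g(\rho)$ be the least $k$ with $|T_\rho\cap 2^k|<(1-\epsilon)2^k$; by the contradiction hypothesis $g$ is everywhere defined, and by the previous paragraph $g$ is a total $\Delta^0_2$ function on the $\str M$-finite set $T\cap 2^{b'}$. Now $\Sigma^0_2\hyp\WPHP$ gives $\str M\models\Sigma^0_2\colon 2t\to(2)^1_t$, hence $\str M\models\Sigma^0_2\colon t\to(2)^1_{t/2}$ by Lemma~\ref{lem:WPH-basics}(\ref{lem:WPH-basics/1}), so Lemma~\ref{lem:WPH-vs-convergence}(\ref{lem:WPHP-vs-convergence/4}) yields an $\ell\in M$ (necessarily $\geq 1$) and an $\str M$-finite $A\subseteq T\cap 2^{b'}$ with $|A|\geq\floor{t/2}$ on which $g<\ell$. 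For $\rho\in A$ we get $|T_\rho\cap 2^\ell|\leq 2^{\ell-g(\rho)}|T_\rho\cap 2^{g(\rho)}|<(1-\epsilon)2^\ell$, while $|T_\rho\cap 2^\ell|\leq 2^\ell$ for the other $\rho\in T\cap 2^{b'}$; summing over level $b'+\ell$ gives
\[
  |T\cap 2^{b'+\ell}|
  =\sum_{\rho\in T\cap 2^{b'}}|T_\rho\cap 2^\ell|
  <2^\ell\bigl((1-\epsilon/2)\,t+\epsilon\bigr),
\]
whence $|T\cap 2^{b'+\ell}|/2^{b'+\ell}<(1-\epsilon/2)\,|T\cap 2^{b'}|/2^{b'}+\epsilon\,2^{-b}$.

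Finally I would iterate, starting from $b_0=b$ and putting $b_{j+1}=b_j+\ell_j$, where $\ell_j$ is the level delivered at stage $j$; this is legitimate because only standardly many stages will be used, exactly as in the proof of Lemma~\ref{lem:WPH-newlemma}(\ref{part:WPH-newlemma/3}). A routine linear-recursion estimate (using $b_j\geq b$) gives $|T\cap 2^{b_j}|/2^{b_j}<(1-\epsilon/2)^j+2^{1-b}$ for every $j$. Since $n$ and $m$ are standard, $\epsilon/2=1/(2m)$ and $1/n$ are fixed positive rationals, so there is a \emph{standard} $k_0$ with $(1-\epsilon/2)^{k_0}<1/(2n)$; together with $2^{1-b}<1/(2n)$ this yields $|T\cap 2^{b_{k_0}}|/2^{b_{k_0}}<1/n$, contradicting $\mu([T])\geq 1/n$. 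The step I expect to be the main obstacle is the harvesting step: one must check that $g$ is a genuine total $\Delta^0_2$ function on $T\cap 2^{b'}$---which is precisely where regularity of $T$ enters, via the $\Delta^0_2$-uniformity of the finite levels of $T$---and one must exploit that $\Sigma^0_2\hyp\WPHP$ delivers a subset $A$ of a \emph{definite} relative size $1/2$ rather than merely an unbounded one. It is this quantitative feature, compounded over standardly many rounds, that pushes the relative measure of $T$ below $1/n$.
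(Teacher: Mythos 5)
Your proof is correct, and it takes a genuinely different route from the paper's. Both proofs share the same core \emph{harvesting step}---the application of $\Sigma^0_2\hyp\WPHP$ via Lemmata~\ref{lem:WPH-basics}\partref{lem:WPH-basics/1} and \ref{lem:WPH-vs-convergence}\partref{lem:WPHP-vs-convergence/4} to bound the modulus function on at least half of the nodes of a level---but they close the argument differently. The paper's proof uses the standardness of $m,n$ to pin $\mu([T])$ inside a window of width $1/(2mn)$: it chooses the largest $k<2mn$ with $\mu([T])\geq k/(2mn)$, picks a level $\ell_0$ whose density lies in $[k/(2mn),(k+1)/(2mn))$, and checks that \emph{one} harvesting step (which multiplies the density by a factor of at most $(2m-1)/(2m)$) already drops the density below $k/(2mn)$, using $k\geq 2m$. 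Your proof instead keeps the measure hypothesis untouched and uses the standardness of $m,n$ to bound the number of iterations of the harvesting step, obtaining the decay recursion $\mu_{j+1}<(1-\epsilon/2)\mu_j+\epsilon 2^{-b}$ and running it standardly many times until the density falls below $1/n$. The paper's argument is more economical (one $\WPHP$ application, no geometric-series bookkeeping); yours is arguably more transparent and---because it works from an arbitrary starting level $\geq b$ rather than a carefully chosen one---handles the ``furthermore'' clause automatically rather than as an afterthought. One small point worth making explicit in your write-up: you need $t=|T\cap 2^{b'}|\geq 2$ at each stage for the pigeonhole to produce a nonempty $A$, but this is guaranteed once $b$ is enlarged (as you already do) so that $2^{b}/n\geq 2$, since $t\geq 2^{b'}/n\geq 2^{b}/n$ for every level $b'\geq b$.
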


\begin{proof}
Fix a model $(M,\thoo S)\models\RCA+\Sigma^0_2\hyp\WPHP$ to work in.
Let $T$ be a regular $\Delta^0_2$~tree of measure $\geq1/n$.
Since $m$ and $n$ are standard, we may choose the unique $k$ such that
 \begin{equation}\label{eqn:partial-density}
 \frac k{2mn} \leq \mu([T]) < \frac{k+1}{2mn}.
 \end{equation}
(This is the only place where we use standardness.)
Notice that $k \geq 2m$.
Now fix a level $\ell_0$ at which
 \begin{equation*}
 \frac k{2mn} \leq \frac{|T \cap 2^{\ell_0}|}{2^{\ell_0}} < \frac{k+1}{2mn},
 \end{equation*} and
 assume towards a contradiction that $\mu([T_\sigma]) < 1 - 1/m$
  for all $\sigma \in T \cap 2^{\ell_0}$;
 in other words, each $\sigma \in T \cap 2^{\ell_0}$ has a level $\ell$
  at which $|T_\sigma \cap 2^{\ell}|/2^\ell< 1-1/m$.
The function mapping $\sigma$ to the least such $\ell$ is $\Delta^0_{2}$,
 so by $\Sigma^0_2 \hyp \WPHP$ and Lemma~\ref{lem:WPH-vs-convergence}(\ref{lem:WPHP-vs-convergence/4}),
  there is an $\ell_1 \in M$ such that $|T_\sigma \cap 2^{\ell_1}|/2^{\ell_1}< 1-1/m$
   for more than half of these $\sigma$'s.
Hence
 \begin{align*}
  \frac{|T \cap 2^{\ell_0+\ell_1}|}{2^{\ell_0+\ell_1}}
  &<\frac1{2^{\ell_0+\ell_1}}\left(
     \frac{\card{T\cap2^{\ell_0}}}2\times1\times2^{\ell_1}
     +\frac{\card{T\cap2^{\ell_0}}}2\times\left(1-\frac1m\right)\times2^{\ell_1}
    \right)\\
  &=\frac{1+(1-1/m)}2\times\frac{\card{T\cap2^{\ell_0}}}{2^{\ell_0}}\\
  &<\frac{2m -1}{2m}\times\frac{k+1}{2mn},
 \end{align*}
 which is $<k/{2mn}$ since $k \geq 2m$.
But this means $\mu([T]) < k/{2mn}$, contradicting line (\ref{eqn:partial-density}).

As for the `furthermore' part of the lemma,
 simply note that when selecting $\ell_0$,
  we may select it to be larger than any given $b\in M$.
\end{proof}

\begin{theorem}\label{thm:WWKL=POS}
The following statements are equivalent over $\RCA$ for all $r\in(0,1)\cap\QQ$.
\begin{enumerate}
 \item $2\hyp\POS(1/2)$.
 \item $2\hyp\WWKLax(1/2)$.
 \item $2\hyp\WWKLax(1/2)$ restricted to regular $\Delta^0_2$ trees.
 \item $2\hyp\POS(r)$.
 \item $2\hyp\WWKLax(r)$.
 \item $2\hyp\WWKLax(r)$ restricted to regular $\Delta^0_2$ trees.
\end{enumerate}
\end{theorem}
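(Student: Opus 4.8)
The plan is to close all the equivalences by a cycle of implications routed through statement~(3) (the restriction of $2\hyp\WWKLax(1/2)$ to regular $\Delta^0_2$ trees). Two families of edges are routine: $2\hyp\POS(s)\Rightarrow 2\hyp\WWKLax(s)$ for $s\in\{1/2,r\}$, which is Corollary~\ref{cor:POS-implies-WWKL}, and $2\hyp\WWKLax(s)\Rightarrow$ its restriction to regular $\Delta^0_2$ trees, which is immediate. Thus (1)$\Rightarrow$(2)$\Rightarrow$(3) and (4)$\Rightarrow$(5)$\Rightarrow$(6), and it remains to prove (3)$\Rightarrow$(1), (3)$\Rightarrow$(4) and (6)$\Rightarrow$(3); together these make the implication graph strongly connected. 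Each of the three asks us to re-derive a $\WWKLax$-type statement at the \emph{same} measure threshold with which it came in --- and Proposition~\ref{prp:WWKL-implies-POS} alone only delivers $2\hyp\POS(s')$ for $s'$ strictly above the given threshold, which never closes the loop. The device that bridges the gap is $\Sigma^0_2\hyp\WPHP$ together with the Partial Lebesgue Density Lemma~\ref{lem:WPH-partial-Density}, which replaces a tree of small positive measure by a cone $T_\sigma$ of measure as close to~$1$ as any fixed standard amount allows. (Throughout, $r$ is treated as a fixed standard rational in $(0,1)$; standardness is genuinely used via Lemmas~\ref{lem:WPH-basics} and~\ref{lem:WPH-partial-Density}.)

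I would first establish the claim that \emph{$(3)$ implies $2\hyp\POS(s)$ for every standard rational $s\in(0,1)$}; taking $s=1/2$ yields (3)$\Rightarrow$(1) and taking $s=r$ yields (3)$\Rightarrow$(4). Work in a model $\str M$ of $\RCA$ satisfying~(3). By Lemma~\ref{lem:WWKL-WPH}, $\str M\models\Sigma^0_2\hyp\WPHP$. Given a $\Pi^0_2$ class $\thoo A$ with $\hat{\mu}(\thoo A)\geq s$, apply Proposition~\ref{prp:subtree-Pi2} with $\delta=s/2$ to obtain a regular $\Delta^0_2$ tree $T$ with $[T]\subseteq\thoo A$ and $\mu([T])\geq s-\delta=s/2>0$. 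Since $s$ is standard, fix a standard nonzero $n$ with $1/n\leq s/2$, so $\mu([T])\geq 1/n$; then Lemma~\ref{lem:WPH-partial-Density}, applied with this $n$ and $m=2$, provides a string $\sigma$ such that $T_\sigma$ is a regular $\Delta^0_2$ tree of measure $\geq 1-1/2=1/2$. By~(3), $[T_\sigma]\neq\emptyset$; prepending $\sigma$ to an infinite path of $T_\sigma$ gives one through $T$, so $\thoo A\neq\emptyset$ because $[T]\subseteq\thoo A$ and $[T]\neq\emptyset$.

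For (6)$\Rightarrow$(3) the first step is to extract $\Sigma^0_2\hyp\WPHP$ from~(6) by adapting the proof of Lemma~\ref{lem:WWKL-WPH} to an arbitrary measure threshold. Assume $\neg\Sigma^0_2\hyp\WPHP$. Using Lemma~\ref{lem:WPH-basics} (both parts), we get, for every standard rational $\epsilon\in(0,1)$ and all sufficiently large $k$, the failure of $\Sigma^0_2\colon 2^k\to(2)^1_{\floor{\epsilon 2^k}}$, i.e.\ a $\Sigma^0_2$ injection from $2^k$ into a set of size $\floor{\epsilon 2^k}$; fix $\epsilon<1-r$. Just as in the proof of Lemma~\ref{lem:WWKL-WPH} (the case $\epsilon=1/2$), Lemma~\ref{lem:WPH-vs-convergence}\partref{lem:WPHP-vs-convergence/2} then furnishes a $\Pi^0_1$ formula $\phi$ with $\str M\models(\forall x<2^k)(\exists y)\,\phi(x,y)$ such that, for every $\ell$, the set $\{x<2^k:\str M\models(\exists y<\ell)\,\phi(x,y)\}$ --- which is $\str M$-finite by $\bd\Sigma^0_1$, its complement in $2^k$ being definable by a $\Sigma^0_1$ formula --- has size $<\floor{\epsilon 2^k}$. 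Exactly as in Lemma~\ref{lem:WWKL-WPH}, the tree $T=2^{<k}\cup\{\sigma\in 2^{\geq k}:\str M\models(\forall y<|\sigma|)\,\neg\phi(\sigma\restd k,y)\}$ is a regular $\Delta^0_2$ tree with no infinite path, and each level satisfies $|T\cap 2^\ell|>(1-\epsilon)2^\ell\geq r\cdot 2^\ell$, so $\mu([T])\geq r$, contradicting~(6). Hence~(6) proves $\Sigma^0_2\hyp\WPHP$. Now let $T$ be any regular $\Delta^0_2$ tree with $\mu([T])\geq 1/2$; choose a standard $m$ with $1-1/m\geq r$ (possible as $r<1$ is a fixed standard rational), apply Lemma~\ref{lem:WPH-partial-Density} with $n=2$ and this $m$ to get $\sigma$ with $T_\sigma$ a regular $\Delta^0_2$ tree of measure $\geq 1-1/m\geq r$, conclude $[T_\sigma]\neq\emptyset$ from~(6), and hence $[T]\neq\emptyset$. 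This is~(3). (When $r\leq1/2$ the last part is unnecessary, since a tree of measure $\geq1/2$ already has measure $\geq r$; the above is needed only for $r>1/2$.)

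The main obstacle is the generalized form of Lemma~\ref{lem:WWKL-WPH}: one must check that the tree $T$ built there is genuinely \emph{regular} $\Delta^0_2$ --- that $(T\cap 2^i:i<\ell)$ is $\str M$-finite for each $\ell$ --- without recourse to $\bd\Sigma^0_2$. This works precisely because $\phi$ is $\Pi^0_1$ rather than merely $\Delta^0_2$: the level sets $T\cap 2^\ell$ and the uniform bounds on the existential witnesses occurring in their definitions are all supplied by $\bd\Sigma^0_1$, which is available in $\RCA$, exactly as in the original proof of Lemma~\ref{lem:WWKL-WPH}. A secondary point to keep straight is the standardness constraint in Lemma~\ref{lem:WPH-partial-Density}: it only boosts a regular tree to measure $\geq 1-1/m$ for \emph{standard} $m$, which forces us in (6)$\Rightarrow$(3) to settle for $1-1/m\geq r$ rather than measure arbitrarily close to~$1$ (harmless since $r<1$ is fixed and standard), and in (3)$\Rightarrow 2\hyp\POS(s)$ to use that $s$ is standard when picking $n$ with $1/n\leq s/2$ (fine as $s\in\{1/2,r\}$).
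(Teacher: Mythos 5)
Your proposal is correct and follows essentially the same route as the paper: the forward implications come from Corollary~\ref{cor:POS-implies-WWKL} and triviality, and the loop is closed by proving (in effect) that restricted $2\hyp\WWKLax(q)$ implies $2\hyp\POS(r)$ for all $q,r\in(0,1)\cap\QQ$, via $\Sigma^0_2\hyp\WPHP$ (from the Lemma~\ref{lem:WWKL-WPH} argument), the Partial Lebesgue Density Lemma~\ref{lem:WPH-partial-Density}, and Proposition~\ref{prp:WWKL-implies-POS}. The one point where you are more explicit than the paper, and correctly so, is in the implication $(6)\Rightarrow\Sigma^0_2\hyp\WPHP$ when $r>1/2$: the paper simply cites Lemma~\ref{lem:WWKL-WPH}, whose statement only covers the threshold $1/2$, whereas you spell out the needed adaptation via Lemma~\ref{lem:WPH-basics}\partref{lem:WPH-basics/1}, obtaining a $\Sigma^0_2$ injection from $2^k$ into $\floor{\epsilon 2^k}$ for a standard $\epsilon<1-r$ so that the resulting regular $\Delta^0_2$ tree has measure $\geq 1-\epsilon>r$; this is exactly the content the paper is implicitly relying on.
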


\begin{proof}
The implications $(1)\Rightarrow(2)$ and $(4)\Rightarrow(5)$
 are special cases of Corollary~\ref{cor:POS-implies-WWKL}.
The implications $(2)\Rightarrow(3)$ and $(5)\Rightarrow(6)$ are trivial.
We claim that $2\hyp\WWKL(q)$ restricted to regular $\Delta^0_2$~trees
 implies $2\hyp\POS(r)$ for all $q,r\in(0,1)\cap\QQ$.
This suffices to entail the remaining implications
 $(3)\Rightarrow(4)$ and $(6)\Rightarrow(1)$.

Work over $\RCA$ plus $2\hyp\WWKLax(q)$
 restricted to regular $\Delta^0_2$~trees.
By Lemmas~\ref{lem:WWKL-WPH} and~\ref{lem:WPH-partial-Density},
 we know $2\hyp\WWKL(r/2)$ holds.
So $2\hyp\POS(r)$ follows from Proposition~\ref{prp:WWKL-implies-POS}.
\end{proof}

In view of the equivalences above,
 it does not matter whether we use
  Definition~\ref{dfn:2-WWKL-half} or Definition~\ref{dfn:2-POS-half}
   when we speak of measure of $\Delta^0_2$ trees.
For the sake of consistency, we will adopt the former in what follows.

Theorem~\ref{thm:WWKL=POS} implies the following strengthening
 of a theorem by Avigad et al.~\cite[Proposition 3.6]{Avigad.Dean.ea:2012}.

\begin{corollary}\label{cor:2-WWKL-2-RAN}
$\RCA\proves 2\hyp\WWKLax(1/2)\to2\hyp\RAN$. \qed
\end{corollary}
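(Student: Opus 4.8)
The plan is to reprove in our weaker setting the Avigad--Dean--Rute theorem \cite[Proposition~3.6]{Avigad.Dean.ea:2012} that $2\hyp\WWKL$ implies $2\hyp\RAN$. Their argument uses $2\hyp\WWKL$ only to pass from a $\Delta^0_2$-tree presentation of (a subset of) the complement of one level of a universal Martin--L\"of test to an honest point of that complement, and Theorem~\ref{thm:WWKL=POS} shows that $2\hyp\WWKLax(1/2)$ already supplies exactly this: over $\RCA$ it yields $2\hyp\POS(r)$ --- equivalently $2\hyp\WWKLax(r)$ restricted to regular $\Delta^0_2$ trees --- for \emph{every} rational $r\in(0,1)$. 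So fix a model $\str M=(M,\thoo S)\models\RCA+2\hyp\WWKLax(1/2)$, and aim to produce a $2$-random real in $\thoo S$ (the relativized reading of $2\hyp\RAN$ is obtained by relativizing everything below to the relevant oracle).

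First I would build inside $\str M$, by the usual dovetailing-with-truncation construction, a universal $\emptyset'$-Martin--L\"of test: a uniformly $\Sigma^0_2$ sequence of open sets $(G_k)_{k\in M}$ with $\mu(G_k)\leq2^{-k}$ such that a real $X$ is $2$-random iff $X\notin\bigcap_kG_k$. This construction is purely arithmetical; moreover, because the $e$-th candidate test is cut off as soon as its measure threatens to exceed its allotted bound, each $G_k$ comes out in the form $\bigcup_sV^k_s$ for a $\Delta^0_2$ sequence of $\str M$-finite clopen sets $V^k_0\subseteq V^k_1\subseteq\cdots$, each of measure $\leq2^{-k}$ --- and no instance of $\bd\Sigma^0_2$ is needed. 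Since $\bigcap_kG_k\subseteq G_2$, it now suffices to find some $X\notin G_2$.

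To do so, put $T=\{\tau\in2^{<M}:[\tau]\not\subseteq V^2_{|\tau|}\}$. As the $V^2_s$ increase, $T$ is a tree, and it is plainly $\Delta^0_2$. For each $n\in M$ the set $T\cap2^n$ is $\Delta_0$ in the code of $V^2_n$, hence $\str M$-finite; and since $\mu(V^2_n)\leq1/4$, fewer than half the length-$n$ strings $\tau$ satisfy $[\tau]\subseteq V^2_n$, so $\card{T\cap2^n}\geq2^{n-1}$ and thus $\mu([T])\geq1/2$ in the sense of Definition~\ref{dfn:2-WWKL-half}. Finally $[T]$ is disjoint from $G_2$: if $X\in G_2$ then $X\in V^2_s$ for some $s$, whence $[X\restd\ell]\subseteq V^2_s\subseteq V^2_\ell$ and $X\restd\ell\notin T$ for all sufficiently large $\ell$. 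Applying $2\hyp\WWKLax(1/2)$ to $T$ therefore yields some $X\in[T]$, and then $X\notin G_2\supseteq\bigcap_kG_k$, so $X$ is $2$-random. (If one prefers to work on the $\Pi^0_2$-class side, one checks --- as in the remarks following the definition of a $\Pi^0_2$ class --- that $2^M\setminus G_2$ has a strict presentation with hat-measure $\geq1/2$, or feeds it to Proposition~\ref{prp:subtree-Pi2} to get a regular $\Delta^0_2$ tree of measure $>1/2$, and concludes by $2\hyp\POS(1/2)$, which holds by Theorem~\ref{thm:WWKL=POS}.)

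The step needing genuine care --- and the reason this is not a one-line deduction from Theorem~\ref{thm:WWKL=POS} and \cite[Proposition~3.6]{Avigad.Dean.ea:2012} --- is passing from a $\Sigma^0_2$-open set of small measure to its complement as a $\Delta^0_2$ tree (or $\Pi^0_2$ class) of large measure: without $\bd\Sigma^0_2$ the obvious level-by-level approximations need not be $\str M$-finite, and an increasing union of $\str M$-finite sets of bounded size may have unbounded size, so the measure is not automatically preserved. This, however, is exactly the obstacle already negotiated in Lemmas~\ref{lem:mono-apprx-tree}--\ref{lem:POS-implies-WWKL} and Proposition~\ref{prp:subtree-Pi2}; the work here is to route the universal test through those results --- or, as above, through the truncated form of the test, which keeps everything $\str M$-finite level by level and sidesteps the issue entirely --- not to invent anything new.
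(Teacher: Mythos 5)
Your argument is correct and is essentially the proof the paper intends but leaves unstated (the corollary carries only a \qed): invoke the Avigad--Dean--Rute universal-$\emptyset'$-test construction, observe that the test's truncation keeps each level $\str M$-finite without $\bd\Sigma^0_2$, so the tree of strings avoiding level $2$ of the test is a $\Delta^0_2$ tree of measure $\geq 3/4 > 1/2$, and apply $2\hyp\WWKLax(1/2)$ to it. The only cosmetic difference is that your direct computation (measure $\geq 3/4$) makes the detour through Theorem~\ref{thm:WWKL=POS} dispensable, whereas the paper frames the corollary as a consequence of that theorem; both routes are sound and use the same key idea.
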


\section{Conservativity}\label{s:fo-thy}

In this section,
 we will determine the first-order theory of~$2\hyp\WWKL(1/2)$.
In particular, we will see that it is finitely axiomatizable.

\begin{theorem}\label{thm:FOT_2-WWKL-half}
$2\hyp\WWKL(1/2)$ is $\Pi^1_1$-conservative over $\RCA + \Sigma^0_2\hyp\WPHP$.
Hence $\ind\Sigma_1 + \Sigma_2\hyp\WPHP$ axiomatizes the first-order theory of $\RCA + 2\hyp\WWKL(1/2)$.
\end{theorem}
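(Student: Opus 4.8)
The plan is to prove the $\Pi^1_1$-conservativity directly by a forcing/model-theoretic argument, and then read off the first-order axiomatization as a corollary using results already in the paper. For the conservativity, I would start with a countable model $\str M = (M, \thoo S) \models \RCA + \Sigma^0_2\hyp\WPHP$ together with a $\Pi^1_1$ sentence $\forall X\, \psi(X)$ (with $\psi$ arithmetical) that fails in $\str M$; equivalently, there is a set parameter $Y \in \thoo S$ witnessing $\exists X\, \neg\psi(X)$ provably fails, so $\str M \models \forall X\, \psi(X)$ must be shown to imply $2\hyp\WWKL(1/2) \models \forall X\, \psi(X)$. The standard route is: take an arbitrary $\str M \models \RCA + \Sigma^0_2\hyp\WPHP$ with the same first-order part as some model of $2\hyp\WWKL(1/2)$, and build an $\omega$-extension $(M, \thoo S')$ with $\thoo S \subseteq \thoo S'$ that models $2\hyp\WWKL(1/2)$ while preserving the first-order part $M$; since $\Pi^1_1$ sentences true in $(M, \thoo S')$ are inherited downward is the wrong direction, one instead iterates: given any $\Delta^0_2$ tree $T$ of measure $\geq 1/2$ appearing over the current $\thoo S$, one adjoins a path through $T$ without adding first-order elements, takes a union over an $\omega$-length bookkeeping of all such trees, and checks the result still satisfies $\RCA + \Sigma^0_2\hyp\WPHP$ (so the construction can continue), ultimately producing $(M,\thoo S^*) \models 2\hyp\WWKL(1/2)$ with first-order part $M$. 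Then any $\Pi^1_1$ sentence provable in $2\hyp\WWKL(1/2)$ holds in $(M,\thoo S^*)$, hence (being $\Pi^1_1$, so downward absolute to subsystems with the same first-order part and fewer sets) holds in $(M,\thoo S)$.

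The single hard step is the path-adjunction: given a regular $\Delta^0_2$ tree $T$ with $\mu([T]) \geq 1/2$ over $(M,\thoo S)$, produce $G \in [T]$ such that $(M, \thoo S[G]) \models \RCA + \Sigma^0_2\hyp\WPHP$, where $\thoo S[G]$ is the $\Delta^0_1$-closure (setwise) of $\thoo S \cup \{G\}$. Preserving $\ind\Sigma^0_1$ is routine by a careful choice of the forcing notion (conditions are nodes $\sigma \in T$ together with a promise that $\mu([T_\sigma])$ stays bounded below, using Lemma~\ref{lem:WPH-partial-Density} to keep finding nodes of measure close to $1$, so that density arguments go through). Preserving $\Sigma^0_2\hyp\WPHP$ relative to $G$ is the crux: one must show that for each $b$, no $\Sigma^{0,G}_2$-definable injection $2b \to b$ exists in the extension. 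The natural approach is a measure-theoretic density argument — the set of $G \in [T]$ for which a given $\Sigma^{0,G}_2$ formula defines such an injection should have measure zero, and then $\Sigma^0_2\hyp\WPHP$ in the ground model (via its characterization through $2\hyp\WWKL(1/2)$-style covering, or directly via Lemma~\ref{lem:WPH-vs-convergence}) lets us find a condition forcing the complement. Concretely, I expect to use the partial Lebesgue density lemma to pass to a node $\sigma$ with $\mu([T_\sigma])$ very close to $1$, decode a candidate injection as a $\Delta^0_2$ object in the $G$-generic, and derive from it a ground-model $\Sigma_2$-definable injection contradicting $\str M \models \Sigma^0_2\hyp\WPHP$; the bookkeeping needed to make the $\Sigma^{0,G}_2$ formula yield a genuine $\Sigma_2$ formula over $\str M$ (using that $G$ has a $\Delta^0_2$ name) is where the technical weight lies.

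For the second sentence of the theorem: once $\Pi^1_1$-conservativity over $\RCA + \Sigma^0_2\hyp\WPHP$ is established, the first-order consequences of $2\hyp\WWKL(1/2)$ are exactly the first-order consequences of $\RCA + \Sigma^0_2\hyp\WPHP$. The first-order part of $\RCA$ is $\ind\Sigma_1 + \exp$ (indeed $\ind\Sigma^0_1$ restricted to the first-order language gives $\ind\Sigma_1$, and $\RCA \vdash \exp$), and $\Sigma^0_2\hyp\WPHP$ over $\RCA$ has first-order content exactly $\Sigma_2\hyp\WPHP$ — here one uses that any $\Sigma_2$-definable injection in a first-order model $\str M_0$ of $\ind\Sigma_1 + \exp$ gives a $\Sigma^0_2$-definable injection in some second-order expansion $(M_0,\thoo S_0) \models \RCA$ (take $\thoo S_0$ to be the $\Delta^0_1$-definable sets, or any $\ind\Sigma^0_1$-closed family), and conversely a $\Sigma^0_2$-definable injection with set parameters, relativized, is captured by passing to an appropriate first-order reduct. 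Combining, $\ind\Sigma_1 + \exp + \Sigma_2\hyp\WPHP$ axiomatizes the first-order theory; and since $\Sigma_2\hyp\WPHP$ implies $\exp$ over $\PAminus + \ind\Delta_0$ (it implies $\bd\Sigma_1$-like consequences, or one simply notes $\exp$ is already folded into the ambient $\ind\Sigma_1$ needed to even state $\Sigma_2\hyp\WPHP$ in the binary-sequence coding), the axiomatization reduces to $\ind\Sigma_1 + \Sigma_2\hyp\WPHP$ as stated. Finite axiomatizability then follows since $\ind\Sigma_1$ is finitely axiomatizable over $\PAminus$ (it is equivalent to $\ind\Sigma_1$ for a single universal formula, or to $\bd\Pi_1$ plus a finite set) and $\Sigma_2\hyp\WPHP$ is a single sentence.
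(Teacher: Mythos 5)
Your high-level plan matches the paper's: prove a model-expansion theorem asserting that a path $G$ can be adjoined to a countable model of $\RCA+\Sigma^0_2\hyp\WPHP$ through any regular $\Delta^0_2$ tree of sufficient measure while preserving $\RCA+\Sigma^0_2\hyp\WPHP$, then iterate ``in the usual manner'' to obtain $\Pi^1_1$-conservativity, and invoke Lemma~\ref{lem:WWKL-WPH} for the converse direction. Your derivation of the first-order axiomatization from conservativity is also essentially right (and $\exp$ is indeed absorbed into $\ind\Sigma_1$).

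However, there is a genuine gap where you write that preserving $\ind\Sigma^0_1$ is ``routine by a careful choice of the forcing notion''. It is not routine, and it is in fact the principal technical innovation of the section. In the absence of $\bd\Sigma^0_2$, a generic $G$ picked through a measure-positive $\Delta^0_2$ tree by an ordinary Jockusch--Soare density argument need not have a controllable jump, and without control over $(Z\oplus G)'$ one cannot verify $\ind\Sigma^{0,G}_1$ in the extension, nor can one even sensibly state and force the $\Sigma^0_2\hyp\WPHP$ requirements for $G$. The paper's way around this is Lemma~\ref{lem:obese-tree-GL1}: it constructs, inside the ground model, a regular $\Delta^{0,Z}_2$ tree $\tilde T$ of measure $\geq 1-\epsilon$ and a non-decreasing $\Delta^{0,Z}_2$ function $H$ such that every $X\in[\tilde T]$ satisfies $(Z\oplus X)'=\{e:\Phi_e^Z(X\restd H(e);e)\defd\}$. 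This is a formalization, over $\RCA$ alone, of the classical fact that 2-randoms are generalized low. One then intersects the given tree $T$ with $\tilde T$ (which is why the hypothesis is $\mu([T])\geq 1/2+\epsilon$ rather than just $\geq 1/2$: the intersection must retain measure $\geq 1/2$), forces inside $T\cap\tilde T$, and uses Lemma~\ref{lem:obese-tree-ISigma1} to conclude both that $\str M[G]\models\RCA$ and that $\Sigma^{0,Z\oplus G}_2$ sets are $\Sigma^{0,Z'\oplus G}_1$. The second consequence is indispensable: it converts the $\Sigma^0_2\hyp\WPHP$ requirement for $G$ into a requirement about $\Phi^{Z'\oplus G}$, which can then be handled by the counting argument in Claims~\ref{clm:forcing-div} and \ref{clm:forcing-card} against regular $\Delta^{0,Z}_2$ subtrees. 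Your measure-zero/density intuition for forcing $\Sigma^0_2\hyp\WPHP$ is sound, but it is only implementable \emph{because} of the $\mathrm{GL}_1$ reduction; without Lemma~\ref{lem:obese-tree-GL1} the whole forcing notion lacks the complexity bounds needed for the density arguments to yield anything in $\str M[G]$.
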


\begin{proof}
Conservativity will follow from
 the model expansion theorem~\ref{thm:model-expansion} below
  in the usual manner.
The remaining part is provided by Lemma~\ref{lem:WWKL-WPH}.
\end{proof}

The relevant model expansion theorem will occupy us
 for the rest of this section.
\begin{definition}
A model $(M,\thoo S)\models\RCA$ is \emph{principal}
 if $\thoo S = \{ X \subseteq M : X \Tredeq Z\}$ for some $Z \subseteq M$.
If $\str M = (M, \thoo S)$ and $X\subseteq M$,
 then
 \begin{equation*}
  \str M[X] = (M, \{Y \subseteq M : Y \Tredeq Z \oplus X~\mathrm{for~some}~Z \in \thoo S\}).
 \end{equation*}
We read $\str M[X]$ as \emph{$\str M$ expanded by $X$}.
\end{definition}

\begin{theorem}\label{thm:model-expansion}
Let $\str{M}$ be a countable principal model of $\RCA + \Sigma^0_2\hyp\WPHP$.
If $T$ is a regular $\Delta^0_2(\str M)$ tree of measure $\geq 1/2+\epsilon$,
  where $\epsilon\in\QQ$ with $0<\epsilon<1/2$,
 then there exists $X \in [T]$ such that $\str{M}[X]$ is also a model of $\RCA + \Sigma^0_2\hyp\WPHP$.
\end{theorem}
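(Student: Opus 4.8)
The plan is to build the path $X\in[T]$ by a forcing/priority construction over the countable principal model $\str M$, ensuring simultaneously that $X$ is a genuine path and that $\str M[X]$ satisfies $\RCA+\Sigma^0_2\hyp\WPHP$. Since $\str M[X]$ automatically satisfies $\RCA$ once $X$ is a set added on top of a principal model (strongly recursive joins preserve $\ind\Sigma^0_1$ by the standard argument), the real content is preserving $\Sigma^0_2\hyp\WPHP$. By Lemma~\ref{lem:WPH-vs-convergence}\partref{lem:WPHP-vs-convergence/4} this amounts to: for every $\Delta^0_2(Z\oplus X)$ function $g\colon 2c\to M$ with $c\geq 1$, there is an $\str M$-finite $A\subseteq 2c$ with $|A|\geq c$ on which $g$ is bounded. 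So I would enumerate all potential $\Delta^0_2(Z\oplus X)$ partial functions $g_e$ (uniformly in $X$, via the Limit Lemma~\ref{limit-lemma}) and meet, for each $e$ and each relevant $c$, the requirement that either $g_e$ is not total, or the desired bounded-on-half set $A$ exists. One also wants each initial segment $X\restd n$ to lie in $T$, which is automatic if the forcing conditions are nodes of $T$.

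The forcing conditions will be nodes $\sigma\in T$, ordered by extension, with the twist that one should only work inside subtrees of $T$ of measure bounded away from $0$ — this is exactly what the Partial Lebesgue Density Lemma~\ref{lem:WPH-partial-Density} is for. The hypothesis $\mu([T])\geq 1/2+\epsilon$ gives the initial slack. When we try to satisfy a requirement for $g_e$ at parameter $c$, we examine, for the current condition $\sigma$, the splitting behaviour of $T_\sigma$: either we can find an extension $\tau\supseteq\sigma$ in $T$ below which $g_e$ is already seen to be bounded on a set of size $\geq c$ (and we take that $\tau$ as the new condition, losing only a controlled amount of measure), or $g_e$ is forced to be unbounded in a strong sense along $[T_\sigma]$, which we can arrange to make $g_e$ non-total on the generic. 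The quantitative bookkeeping — keeping $\sum$ of the measure losses under $\epsilon$, and using density to refocus on a subtree of measure $\geq 1-1/m$ before each step so that the generic path still exists — is the mechanism that turns $\Sigma^0_2\hyp\WPHP$ in $\str M$ into $\Sigma^0_2\hyp\WPHP$ in $\str M[X]$. Because $\str M$ is countable we can run this as an $\omega$-length construction dovetailing all $(e,c)$; the principal hypothesis ensures the whole construction can be carried out with a single real oracle, so that the sequence of conditions, and hence $X$, is well-defined.

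The main obstacle, as usual in this style of argument, is handling $\Sigma^0_2$ (rather than $\Sigma^0_1$) requirements without $\bd\Sigma^0_2$: a $\Delta^0_2(Z\oplus X)$ function depends on a limit, so one cannot simply read off at a finite stage whether $g_e$ is total, and the approximations may behave wildly on a nonstandard cut. The device for coping with this should mirror the `tame cuts' constructions used in Lemma~\ref{lem:mono-apprx-tree} and Lemma~\ref{lem:coded-sseq}: replace $g_e$ by a $\Delta^0_2$ modulus and pass to a $\Delta^0_2$ cofinal sequence of `settling levels' that increase rarely enough that the relevant finite restrictions become $\str M$-finite. Then at each level of this sparse sequence, the dichotomy ``$g_e$ bounded on $\geq$ half'' versus ``$g_e$ spreads out'' is decidable relative to the oracle, and the measure-theoretic counting in Lemma~\ref{lem:WPH-partial-Density} applies verbatim to push through the requirement while only discarding a geometrically small slice of the tree. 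The second, subtler point is checking $\str M[X]\models\ind\Sigma^0_1$: one must verify that $X$ is \emph{strongly} recursive in $Z\oplus X$-type joins, i.e.\ that the construction produces $(X\restd n)_{n\in M}$ as a genuine $\Delta^0_2$-definable-over-$\str M$ object once a code for the generic filter is adjoined, so that the Groszek–Slaman transitivity pathology is avoided; this is where the principal hypothesis on $\str M$ does real work.
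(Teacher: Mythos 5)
Your high-level plan — Jockusch–Soare forcing over a ground model with conditions that are measure-bounded subtrees of $T$, extensions obtained via the Partial Lebesgue Density Lemma~\ref{lem:WPH-partial-Density}, and requirements derived from Lemma~\ref{lem:WPH-vs-convergence}\partref{lem:WPHP-vs-convergence/4} — matches the paper. But there is a genuine missing idea, and it is exactly the point you yourself flag as the ``main obstacle.''

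You propose to meet requirements about $\Delta^0_2(Z\oplus X)$ functions $g_e$ directly, using the Limit Lemma~\ref{limit-lemma} together with a tame-cuts passage to a sparse sequence of settling levels in the style of Lemma~\ref{lem:coded-sseq}. That device, however, only works for a function whose modulus is $\Delta^0_2$ over the \emph{ground} model: in Lemma~\ref{lem:coded-sseq} both the function and its modulus live in $(M,\mathcal S)$ and one exploits $\ind\Sigma^0_1$ \emph{there}. For $g_e$ that is $\Delta^0_2(Z\oplus X)$, the modulus depends on the generic $X$ in a $\Pi^{0,Z\oplus X}_1$ way. At any finite stage the forcer has committed only to a condition $(\sigma,S)$, and the question of whether the approximation to $g_e(a)$ has already settled along the generic is a genuine $\Sigma^0_2$ question relative to $Z\oplus X$; it cannot be decided from a finite amount of $X$ nor pushed to an $\str M$-definable cofinal subsequence ahead of time. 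So the ``dichotomy ... is decidable relative to the oracle'' step in your outline is not available as stated, and neither is your appeal to ``the standard argument'' that $\str M[X]\models\ind\Sigma^0_1$: Groszek–Slaman shows exactly that this is not automatic without additional control.

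The paper's proof resolves both issues with one preliminary move that you do not make: before forcing, it constructs (Lemma~\ref{lem:obese-tree-GL1}) a regular $\Delta^{0,Z}_2$ tree $\tilde T$ of measure $\geq 1-\epsilon$ together with a nondecreasing $\Delta^{0,Z}_2$ function $H$ such that \emph{every} $X\in[\tilde T]$ satisfies $(Z\oplus X)'=\{e:\Phi_e^Z(X\restd H(e);e)\defd\}$ in $\str M[X]$ — a formalization, relative to $\str M$, of the classical fact that $2$-randoms are generalized low. Intersecting $T$ with $\tilde T$ costs only $\epsilon$ of measure, and then Lemma~\ref{lem:obese-tree-ISigma1} shows that for any such $X$, $\str M[X]\models\RCA$ and every $\Sigma^{0,Z\oplus X}_2$ set is $\Sigma^{0,Z'\oplus X}_1$. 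This converts all your $\Sigma^{0,Z\oplus X}_2$ requirements into $\Sigma^{0,Z'\oplus X}_1$ requirements \emph{before} the forcing begins, so the dichotomy at each step becomes a decidable question about a single Turing functional $\Phi^{Z'}$, and the measure-theoretic counting (Claims~\ref{clm:forcing-div} and~\ref{clm:forcing-card} in the paper) then goes through exactly as you sketch. Without that preprocessing, your argument stalls at the point where you need to read off, from a forcing condition, whether the $\Delta^0_2(Z\oplus X)$ function is bounded on half the domain.
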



Our proof involves a relativized version of Jockusch--Soare forcing
 with the restriction that conditions must in a sense have large measure.
The relativization will be carried out carefully
 so as to keep down the amount of induction used.
In particular, we will start the forcing construction
 from a specially chosen condition, to be given by Lemma~\ref{lem:obese-tree-GL1}.
Let us start by setting up some notation.
\begin{definition}[$\RCA^*$]
Recall that $\Phi_e^Z(x)$ denotes the $e$-th Turing functional,
  run with oracle~$Z$ on input $x$, and
 $\Phi_{e,s}^Z(x)$ is the same but run for only $s$-many steps.
Let $\Phi_e^Z(\sigma;x)$ abbreviate $\Phi_{e,|\sigma|}^{Z \oplus \sigma}(x)$.
\end{definition}
For example, with these notations,
 the Turing jump of $Z \oplus X$ can be expressed in more than one way:
  $$(Z \oplus X)' = \{e : \Phi_e^{Z \oplus X}(e) \defd\} = \{e : \exists \ell \,\Phi_e^Z(X \restd \ell ; e) \defd\}.$$

Lemma \ref{lem:obese-tree-GL1} below is the same as Proposition~1.3 in Conidis--Slaman~\cite{Conidis.Slaman:2013},
 except that the base theory is weakened from $\RCA + \bd\Sigma^0_2$ to $\RCA$.
For this improvement,
 we carefully replace applications of $\bd\Sigma^0_2$ with those of Lemma~\ref{lem:omega-re-function}.

\begin{lemma}[$\RCA$]\label{lem:omega-re-function}
Suppose $F_0$ is a recursive function with two arguments, and
 there is a recursive function $G$ such that for all $x$,
\[
  |\{s: F_0(x,s) \neq F_0(x,s+1)\}| < G(x).
\]
Then $F(x) = \lim_s F_0(x,s)$ exists for all $x$, and
 the map $n \mapsto \seq{F(x) : x < n}$ is total and $\Delta^0_2$.
\end{lemma}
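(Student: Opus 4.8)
The plan is to prove Lemma~\ref{lem:omega-re-function} by a ``tame cuts'' argument in the style of the proof of Lemma~\ref{lem:coded-sseq}, replacing the appeal to $\bd\Sigma^0_2$ that one would ordinarily use. The hypothesis is that $F_0$ is recursive and the number of mind-changes of $s\mapsto F_0(x,s)$ is bounded by a recursive function $G(x)$. First I would observe that $F(x)=\lim_s F_0(x,s)$ exists for every $x$: the set $C_x=\{s:F_0(x,s)\neq F_0(x,s+1)\}$ is $\Delta^0_1$ and, by hypothesis, bounded by $G(x)$, hence $\str M$-finite by bounded $\Delta_0$~comprehension, so it has a maximum $s_x$ (taking $s_x=0$ if $C_x=\emptyset$), and $F(x)=F_0(x,s_x+1)$ for all later stages. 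So pointwise convergence is immediate; the content of the lemma is the \emph{uniform} $\Delta^0_2$-ness of the truncations $n\mapsto\seq{F(x):x<n}$, which does not follow for free without $\bd\Sigma^0_2$.

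For the uniform part I would define a $\Delta^0_1$ ``stagewise modulus'' much as in Lemma~\ref{lem:coded-sseq}: let
\[
  m(x,s)=\text{the number of $t\leq s$ with $F_0(x,t)\neq F_0(x,t+1)$},
\]
a recursive function which is nondecreasing in $s$ and, by hypothesis, bounded above by $G(x)$ uniformly in $s$; write $m(x)=\lim_s m(x,s)=|C_x|$, a $\Pi^0_1$ total function. The point of counting mind-changes rather than locating the last one is that $m(x)$ is bounded by the \emph{recursive} function $G(x)$, so for each $n$ the set $\{(x,m(x)):x<n\}$ is $\Pi^0_1$ and bounded, hence coded, giving us a uniform handle on the moduli below any threshold; this is exactly the role that Lemma~\ref{lem:coded-sseq}'s construction plays there and that $\bd\Sigma^0_2$ would play in the Conidis--Slaman original. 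Using $\ind\Sigma^0_1$ one finds, for each $n$, a stage $s$ by which $m(x,s)=m(x)$ for all $x<n$ simultaneously: indeed the $\Sigma^0_1$ statement ``$\exists s\,\forall x<n\,(m(x,s)=G(x)\text{ or }m(x,s)=m(x,s+1)=\dots)$'' can be managed because each individual modulus has already stabilized by stage $\max_{x<n}s_x$, and $\ind\Sigma^0_1$ suffices to show that a finite collection of individually-eventually-true $\Sigma^0_1$ events has a common witnessing stage once we have a coded bound on the witnesses, which the boundedness by $G$ supplies.

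Concretely I would set $h(n)=\min\{s:\forall x<n\,(m(x,s)=m(x,s+1))\text{ stays true}\}$—more precisely $h(n)=\min\{s:\forall x<n\,\forall t\in[s,?]\dots\}$ handled via the coded set $\{(x,s_x):x<n\}$ obtained above—and then define
\[
  \seq{F(x):x<n}=\seq{F_0(x,h(n)+1):x<n}.
\]
The graph of $h$ is $\Pi^0_1$ (it asserts that no later mind-change occurs), and $F_0$ is recursive, so the map $n\mapsto\seq{F(x):x<n}$ has a $\Delta^0_2$ graph—$\Sigma^0_2$ because ``$w=\seq{F_0(x,s+1):x<n}$ for some $s$ with no mind-change after $s$'', and $\Pi^0_2$ because the value is unique—which is the conclusion. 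The main obstacle, and the only place real care is needed, is the step producing the uniform stabilization stage $h(n)$ within $\ind\Sigma^0_1$ alone: one must genuinely exploit that the bound $G$ on $m(x)$ is recursive (not merely $\Delta^0_2$) so that $\{(x,m(x)):x<n\}$ is coded, mirroring how Lemma~\ref{lem:coded-sseq} extracted a coded sequence of modulus values; everything downstream is then routine bookkeeping with $\Delta_0(\exp)$ coding.
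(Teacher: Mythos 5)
Your argument reaches the same endgame as the paper's --- find a $\Delta^0_2$-definable common stabilization stage $h(n)$ and set $\seq{F(x):x<n}=\seq{F_0(x,h(n)+1):x<n}$ --- but you get there by a genuinely different intermediate step. The paper sets $G^*(n)=G(0)+\dots+G(n-1)$, a recursive function, and observes that $\{s:\exists x<n\ F_0(x,s)\neq F_0(x,s+1)\}$ is a $\Delta^0_1$ set of cardinality $<G^*(n)$; a $\Delta^0_1$ set with a coded cardinality bound is bounded (hence coded) in $\ind\Sigma^0_1$, so one can take $s_n$ to be its maximum. You instead code the per-$x$ mind-change counts $\{(x,m(x)):x<n\}$ and then extract, from that code, the per-$x$ last-change stages and their maximum. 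Both routes crucially exploit that $G$ is recursive, not merely $\Delta^0_2$, which is the single place where the hypothesis of this lemma differs from the generic Limit Lemma; the paper's route (sum $G$ up front, count stages) is somewhat cleaner because it avoids going through the graph of $m$ at all. One slip to fix: the graph of $m(x)=\lim_s m(x,s)$ is $\Delta^0_2$, not $\Pi^0_1$ --- ``$m(x)=k$'' requires both $\forall s\ m(x,s)\leq k$ (which is $\Pi^0_1$) and $\exists s\ m(x,s)=k$ (which is $\Sigma^0_1$, with no bound on the witness) --- so the set $\{(x,m(x)):x<n\}$ is not literally $\Pi^0_1$. What is $\Pi^0_1$ is the relation $m(x)\leq k$, and the set $\{(x,k):x<n,\ k<G(x),\ m(x)\leq k\}$ is a \emph{bounded} $\Pi^0_1$ set, hence coded in $\ind\Sigma^0_1$; from its code one recovers $m(x)=\min\{k:(x,k)\in\text{that set}\}$ by a bounded search and only then gets the coded graph of $m\restd n$. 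Similarly, obtaining $\{(x,s_x):x<n\}$ from $\{(x,m(x)):x<n\}$ requires one application of $\bd\Sigma^0_1$ to bound the search for the $s_x$'s before bounded $\Delta^0_1$ comprehension can code the graph; your prose gestures at this but should be made precise. With those two repairs the proof goes through.
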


\begin{proof}
Define $G^*(n) = G(0) + \dots + G(n-1)$.
Then $G^*$ is total $\Delta^0_1$, and
\[
  |\{s : (\exists x < n) F_0(x,s) \neq F_0(x,s+1)\}| < G^*(n)
\] for all $n$.
Since the set displayed above is coded,
 each $n$ corresponds to some maximum value $s_n$ of $s$
  (or, let us say, to zero should the set be empty);
 moreover, the function mapping $n$ to $s_n$ is $\Delta^0_2$.
From this we get a total $\Delta^0_2$ function mapping each $n$ to
 $\seq{F(x) : x < n} = \seq{F_0(x,s_n+1) : x < n}$.
\end{proof}

Now we state and prove Lemma \ref{lem:obese-tree-GL1}.
 The key idea of the proof is borrowed from the classical proof that
  every $2$-random is generalized low.

\begin{lemma}\label{lem:obese-tree-GL1}
Let $\str M=(M,\thoo S)\models\RCA$ and $Z\in\thoo S$.
Then for every positive $\epsilon\in\QQ\evalin{\str M}$
 there are a non-decreasing $\Delta^{0,Z}_2$ function~$H$ and
           a regular $\Delta^{0,Z}_2$ tree~$\tilde T$
            of measure $\geq1-\epsilon$
  such that for every $X\in[\tilde T]$,
   \begin{equation*}
    (Z\oplus X)'=\{e:\Phi_e^Z(X\restd H(e);e)\defd\}
   \end{equation*} in $\str M[X]$.
\end{lemma}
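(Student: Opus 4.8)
The plan is to mimic the classical fact that every $2$-random real is generalized low, but with two twists: we must produce, rather than merely inhabit, a large-measure $\Delta^{0,Z}_2$ tree all of whose paths are generalized low over $Z$; and we must track the measure loss carefully enough to keep it below $\epsilon$ while using only $\RCA$-level induction. First I would set up the approximation machinery. Working in $\str M$ and relativizing to $Z$, for each $e$ consider the $\Sigma^{0,Z}_1$ event ``$\exists\ell\,\Phi^Z_e(X\restd\ell;e)\defd$''; I want to cut this event off at a level $H(e)$ so that, after the cutoff, the only $X$'s for which $\Phi^Z_e$ later converges form a set of measure $<\epsilon/2^{e+1}$ (say), and then $\tilde T$ will be the set of $\sigma$ not yet injured at any coordinate $e\leq|\sigma|$. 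The function $H$ is defined by: $H(e)$ is the least $h$ such that
\[
\mu\{X : \exists\ell>h\ \Phi^Z_e(X\restd\ell;e)\defd \ \wedge\ \forall\ell\leq h\ \Phi^Z_e(X\restd\ell;e)\undefd\} < \frac{\epsilon}{2^{e+1}}.
\]
Such an $h$ exists because, as the cutoff $h$ increases, these ``late convergence'' measures form a non-increasing sequence with infimum $0$ (this is a convergent-measure computation, provable with $\ind\Sigma^0_1$ relative to $Z$, since for fixed $h$ the measure in question is a $\Delta^{0,Z}_2$ real approximated from above by recursive-in-$Z$ stage-by-stage approximations).

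The crux is then to show that the map $n\mapsto\seq{H(e):e<n}$ is total and $\Delta^{0,Z}_2$, so that $\tilde T$, whose level $2^\ell$ is computable uniformly from $\seq{H(e):e\leq\ell}$, is a \emph{regular} $\Delta^{0,Z}_2$ tree in the sense of Definition~\ref{def:regular-trees}. This is exactly where Lemma~\ref{lem:omega-re-function} (and behind it Lemma~\ref{lem:coded-sseq}) earns its keep, replacing the appeal to $\bd\Sigma^0_2$ that Conidis--Slaman used. Concretely I would exhibit a recursive-in-$Z$ double-approximation $F_0(e,s)$ to $H(e)$ together with a recursive-in-$Z$ bound $G(e)$ on the number of mind-changes: the approximation $F_0(e,s)$ at stage $s$ picks the least $h\leq s$ whose stage-$s$ approximate ``late-convergence measure'' is below $\epsilon/2^{e+1}$, and because that approximate measure is monotone non-increasing in $s$ for each fixed $h$ and the true value is eventually below threshold, the number of times $F_0(e,\cdot)$ changes is bounded by how many $h<H(e)$ there are whose approximate measure ever dips below and then rises above threshold---a quantity one can bound by a recursive-in-$Z$ function of $e$ using the fact that each individual approximate measure changes monotonically. (If a clean mind-change bound is awkward to extract directly, the fallback is to feed the modulus of $H$ into Lemma~\ref{lem:coded-sseq} and thin out to a cofinal subsequence, which suffices for the regularity conclusion.) This is the step I expect to be the main obstacle: getting a genuinely recursive-in-$Z$ bound $G$ on the mind-changes, rather than one that tacitly needs $\bd\Sigma^0_2$ to see is total.

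With $H$ and $\tilde T$ in hand the rest is bookkeeping. The measure bound $\mu([\tilde T])\geq 1-\epsilon$ follows because the complement of $[\tilde T]$ is contained in $\bigcup_e\{X:\text{coordinate }e\text{ is injured after level }H(e)\}$, a union of sets of measure $<\epsilon/2^{e+1}$, totalling $<\epsilon$; here one uses $\sigma$-subadditivity for this specific $\Delta^{0,Z}_2$-presented union, which is available in $\RCA$ by a $\ind\Sigma^0_1$ geometric-series argument since the bounds are standard-indexed. Finally, for the jump identity: for any $X\in[\tilde T]$, by construction no coordinate $e$ is ever injured after level $H(e)$, which by the definition of ``injured'' means precisely that $\Phi^Z_e(X\restd\ell;e)\defd$ for some $\ell$ if and only if already $\Phi^Z_e(X\restd H(e);e)\defd$; recalling from the $\RCA^*$ remark that $(Z\oplus X)'=\{e:\exists\ell\,\Phi^Z_e(X\restd\ell;e)\defd\}$, this gives $(Z\oplus X)'=\{e:\Phi^Z_e(X\restd H(e);e)\defd\}$ in $\str M[X]$, as required. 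One should double-check that this equality holds \emph{inside} $\str M[X]$, i.e.\ that $H$ remains total and the relevant computations converge from the point of view of the expanded model, but since $H$ is $\Delta^{0,Z}_2$ and $Z\in\thoo S\subseteq\str M[X]$ this is routine.
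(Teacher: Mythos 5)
Your high-level strategy matches the paper's: mimic the classical generalized-lowness argument, bound the measure lost at coordinate $e$ by $\epsilon/2^{e+1}$, and invoke Lemma~\ref{lem:omega-re-function} in place of $\bd\Sigma^0_2$ to get a $\Delta^{0,Z}_2$ modulus for the resulting function. However, you have correctly identified (and honestly flagged) the crux, and that crux is not resolved in your writeup, so there is a genuine gap.

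The gap is precisely the recursive-in-$Z$ mind-change bound. You define $H(e)$ directly as ``least $h$ such that the late-convergence measure at cutoff $h$ is $<\epsilon/2^{e+1}$'' and then propose a stage-$s$ approximation $F_0(e,s)$ that picks the least $h\le s$ whose stage-$s$ residual measure is below threshold. This $F_0$ is indeed monotone non-decreasing in $s$, but each increment does not force the observed measure to drop by a fixed amount, so there is no apparent recursive bound $G(e)$ on the number of increments; and without such a bound, Lemma~\ref{lem:omega-re-function} does not apply and you cannot conclude totality of $H$ (or regularity of $\tilde T$) in $\RCA$ alone. The paper avoids this by defining $F_0$ with a different rule: $F_0(e,s+1)$ jumps to $s+1$ exactly when the observed value $p_{e,s+1}$ has dropped by at least $\epsilon/2^{e+1}$ since the last jump, and otherwise stays put. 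Since the $p_{e,\ell}$ are non-increasing rationals in $[0,1]$, this caps the mind-changes at $\lfloor 2^{e+1}/\epsilon\rfloor$, an explicit recursive-in-$Z$ bound, and Lemma~\ref{lem:omega-re-function} then yields a total $\Delta^{0,Z}_2$ limit $F$ together with the coded initial segments needed for regularity. The resulting $F(e)$ is not ``least $h$ with small residual measure'' but rather ``last stage at which a measure drop of $\ge\epsilon/2^{e+1}$ was observed''; both have the property $p_{e,F(e)}-p_{e,\ell}<\epsilon/2^{e+1}$ for $\ell>F(e)$, which is all the measure estimate needs, but only the paper's version comes with a free mind-change bound. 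Your suggested fallback of feeding ``the modulus of $H$'' into Lemma~\ref{lem:coded-sseq} is circular: Lemma~\ref{lem:coded-sseq} requires as input a \emph{total} $\Delta^{0,Z}_2$ function, which is exactly what you have not yet established for $H$.

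Two smaller points. First, your $H$ as defined is not obviously non-decreasing in $e$ (different functionals have unrelated behavior), so you must take a running maximum, $H(n)=\max\{F(e):e\le n\}$, as the paper does; this is needed both for the statement and for the regularity of $\tilde T$. Second, defining $H(e)$ via a strict inequality ``$<\epsilon/2^{e+1}$'' against an infimum that may not be attained creates a boundary ambiguity between the limit of your approximations and the intended value; the paper's indirect definition sidesteps this.
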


\begin{proof}
Work in~$\str M$.
Fix a rational $\epsilon>0$.
For each pair $(e,\ell)$, define
 \begin{equation*}
  p_{e,\ell} = \frac{|\{\sigma\in2^\ell : \Phi_e^Z(\sigma;e)\undefd\}|}{2^\ell}.
 \end{equation*}
For any fixed $e$, we have
  $1 \geq p_{e,0} \geq \cdots\geq p_{e,\ell} \geq \cdots \geq 0;$
 thus if we define a $Z$-recursive binary function~$F_0$ by $F_0(e,0)=0$, and
  \begin{equation*}
   F_0(e,s+1) =
   \begin{cases}
    s+1,      & \text{if $p_{e,F_0(e,s)}-p_{e,s+1}\geq\epsilon/2^{e+1}$;}\\
    F_0(e,s), & \text{otherwise,}
   \end{cases}
  \end{equation*}
 there are at most $\floor{2^{e+1}/\epsilon}$-many $s$'s
  for which $F_0(e,s) \neq F_0(e,s+1)$.
Hence by Lemma~\ref{lem:omega-re-function} the pointwise limit
  $F(\cdot)=\lim_s F_0(\cdot,s)$ exists, and
 the map $n\mapsto\seq{F(e):e<n}$ is total and $\Delta^{0,Z}_2$.
From this we obtain a non-decreasing total function
 $H\colon n\mapsto\max\{F(e):e\leq n\}$.
As the reader can directly verify,
 the image of~$H$ is unbounded, and
 the map $n\mapsto\seq{H(e):e<n}$ is total $\Delta^{0,Z}_2$.
Define a tree~$\tilde T$ level-by-level as follows:
 all the coded binary strings of length strictly less than~$H(0)$ are in~$\tilde T$, and
 if $H(n)\leq\ell<H(n+1)$,
  then the $\ell$-th level of~$\tilde T$ is
  \begin{equation*}
   \tilde T_\ell = \{\sigma \in 2^\ell : \falt en{(
    \text{either $\Phi^Z_e(\sigma ; e) \undefd$
              or $\Phi^Z_e(\sigma\restd F(e);e) \defd$}
   )}\}.
  \end{equation*}
It is straightforward to check that
 $\tilde T$~is a regular $\Delta^{0,Z}_2$ tree, and
 $X\in[\tilde T]$ implies $\fa e{(\text{$\Phi^{Z\oplus X}_e(e)\undefd$ or $\Phi^Z_e(X\restd H(e);e)\defd$})}$.

As for the measure, consider the complement $\tilde T_\ell\compl$ of $\tilde T_\ell$.
Whenever $H(n)\leq\ell<H(n+1)$, we have $F(e)\leq H(n)\leq\ell$ for all $e<n$, and
 so
 \begin{align*}
  \frac{|\tilde T_\ell\compl|}{2^\ell}
   &= \frac{\left|\{\sigma\in2^\ell : \exlt en{(\text{$\Phi_e^Z(\sigma;e)\defd$ and
                                                      $\Phi_e^Z(\sigma\restd F(e);e)\undefd$})}\}\right|}
       {2^\ell}\\
   &\leq \sum_{e < n} \frac{\left|\{\sigma\in2^\ell:\text{$\Phi_e^Z(\sigma;e)\defd$ and
                                                          $\Phi_e^Z(\sigma\restd F(e);e)\undefd$}\}\right|}
                       {2^\ell}\\
   &= \sum_{e<n} (p_{e,F(e)}-p_{e,\ell})
   \quad<\quad \sum_{e < n} \frac\epsilon{2^{e+1}}
   \quad<\quad \epsilon.
 \end{align*}
 Hence $\mu([\tilde T]) \geq 1-\epsilon$, as desired.
\end{proof}

We follow this up with a quick, technical lemma.

\begin{lemma}\label{lem:obese-tree-ISigma1}
Fix a principal model $\str M=(M, \{Y \subseteq M : Y \leq_T Z\})\models\RCA$.
Let $H$~be a non-decreasing $\Delta_2^{0,Z}$~function, and let $X\subseteq M$
 be a regular set such that
       $(Z\oplus X)'=\{e:\Phi_e^Z(X\restd H(e);e)\defd\}$ in $\str M[X]$.
Then $\str M[X]\models\RCA$ and
 every $\Sigma^{0, Z \oplus X}_2$ set is $\Sigma^{0,Z' \oplus X}_1$ in $\str M[X]$.
\end{lemma}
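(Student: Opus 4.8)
The plan is to verify the two assertions in turn, relativizing the standard computability-theoretic facts to $Z$ while being careful about which induction principles are available. For the first assertion, $\str M[X]\models\RCA$, the point is that $\PA^-$ and the relevant $\Delta^0_1$~comprehension instances are inherited automatically; the only thing that needs checking is $\ind\Sigma^0_1$ in $\str M[X]$. Here I would use that $X$ is a regular set—so $(X\restd n : n\in M)$ is $\Delta^{0,Z}_1$ and hence lies in $\str M$—together with the fact that $\str M\models\ind\Sigma^0_1$ and that any $\Sigma^{0,Z\oplus X}_1$ predicate in $\str M[X]$ can be uniformly approximated by a $\Sigma^{0,Z}_1$ predicate in $\str M$ with the finite initial segments of $X$ plugged in as parameters. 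Since the needed $\Sigma^0_1$ induction in $\str M$ relativizes to $Z$, this gives $\ind\Sigma^{0}_1$ for formulas with a $Z\oplus X$ oracle, which is exactly $\ind\Sigma^0_1$ in $\str M[X]$.

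For the second assertion, I would first unwind the hypothesis: the displayed equality $(Z\oplus X)'=\{e:\Phi_e^Z(X\restd H(e);e)\defd\}$ says that the jump of $Z\oplus X$ is computed by a \emph{bounded search} along $X$ whose bound $H$ is $\Delta^{0,Z}_2$, hence (via the Limit Lemma~\ref{limit-lemma} relativized to $Z$) computable from $Z'$. Therefore $(Z\oplus X)'\Tredeq Z'\oplus X$ in $\str M[X]$: to decide whether $e\in(Z\oplus X)'$ one runs $\Phi_e^Z$ with oracle $Z\oplus(X\restd H(e))$, and $H(e)$ itself is read off from $Z'\oplus X$. (The map $n\mapsto\seq{H(e):e<n}$ being total $\Delta^{0,Z}_2$, as recorded in Lemma~\ref{lem:obese-tree-GL1}, is what makes this setwise rather than merely pointwise.) Since $\str M$ is principal with generator $Z$, it follows that in $\str M[X]$ the set $(Z\oplus X)'$ is computable from $Z'\oplus X$, and conversely $Z'\oplus X\Tredeq(Z\oplus X)'$ trivially; so the two generate the same sets over $X$.

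Now a $\Sigma^{0,Z\oplus X}_2$ set in $\str M[X]$ is, by the relativized form of the observation just before the lemma, $\Sigma^{0}_1$ in the oracle $(Z\oplus X)'$—explicitly, $y\in A\iff\exists\ell\,\Phi_e^{(Z\oplus X)'}(\ell;y)\defd$ for a suitable index, and one turns the $\Sigma_2$ matrix into such an existential statement using the recursion-theoretic normal form, which needs only $\ind\Sigma^0_1$ relativized, already available by the first part. Substituting the reduction $(Z\oplus X)'\Tredeq Z'\oplus X$ then exhibits $A$ as $\Sigma^{0,Z'\oplus X}_1$ in $\str M[X]$, as required. I expect the main obstacle to be the first part: confirming that the Groszek--Slaman non-transitivity pathology does not bite here, i.e.\ that because $X$ is \emph{regular} (equivalently $X\Tredeq Z$ is handled setwise, with all initial segments coded in $\str M$) the oracle $Z\oplus X$ behaves well enough that $\Sigma^0_1$-induction genuinely transfers to $\str M[X]$; the rest is a routine relativization of the Shoenfield/jump calculus.
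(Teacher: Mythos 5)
Your treatment of the second assertion is essentially the paper's: you show $(Z\oplus X)'\Tredeq Z'\oplus X$ using that $H$ is $\Delta^{0,Z}_2$ and that the jump is decided by a bounded search along~$X$, and then observe that $\Sigma^{0,Z\oplus X}_2 \subseteq \Sigma^{0,(Z\oplus X)'}_1$. That part is fine.

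Your treatment of the first assertion, however, has a real gap. You propose to reduce a $\Sigma^{0,Z\oplus X}_1$ predicate $\exists y\,\theta(x,y,Z\oplus X)$ to a $\Sigma^{0,Z}_1$ predicate ``with the finite initial segments of $X$ plugged in as parameters,'' and then invoke $\ind\Sigma^{0,Z}_1$ in $\str M$. But this reduction requires the \emph{map} $n\mapsto X\restd n$ to be $\Delta^{0,Z}_1$ (i.e.\ $X\Tredeq Z$), which would put $X$ in $\thoo S$ and trivialize the whole expansion. Regularity of $X$ only says that each individual $X\restd n$ is $\str M$-finite; it does not give uniform computability of those codes from~$Z$, so the existential over $y$ cannot be translated into a $\Sigma^{0,Z}_1$ formula over $\str M$. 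Your parenthetical ``equivalently $X\Tredeq Z$ is handled setwise'' is where the two notions get conflated. The ingredient you are missing is precisely the hypothesis on the jump, which you use only in the second part. The paper's proof of the first part is: since $H$ is non-decreasing, the displayed hypothesis gives
\[
(Z\oplus X)'\restd b \;=\; \{e<b : \Phi_e^Z(X\restd H(b);e)\defd\},
\]
and the right-hand side is a $\Delta^0_0$-definable (in $\str M$) subset of $[0,b)$ with the single coded parameter $X\restd H(b)$, hence $\str M$-finite by $\ind\Delta_0+\exp$. Once every bounded initial segment of the jump is $\str M$-finite, $\ind\Sigma^0_1$ in $\str M[X]$ follows. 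Without using the jump hypothesis in part one, the Groszek--Slaman pathology you flagged does in fact bite.
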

\begin{proof}
To get $\RCA$, it is enough to show
 that $\str M[X] \models \ind \Sigma^0_1$, and
 for this it suffices to show
  that $(Z \oplus X)'\restd b$ is $\str M$-finite for all $b\in M$.
Since $H$ is non-decreasing, in $\str M[X]$ we have
 \begin{equation*}
  (Z \oplus X)' \restd b
   = \{e <b  : \Phi_e^Z( X \restd H(b) ; e) \defd\}.
 \end{equation*}
This set, when evaluated in~$\str M$, must be $\str M$-finite
 because $X\restd H(b)$ is $\str M$-finite and
         $\str M\models\ind\Sigma^0_0+\exp$.

As $\str M[X]\models\bd\Sigma^0_1+\exp$,
 every $\Sigma^{0,Z \oplus X}_2$ set is $\Sigma^{0,(Z \oplus X)'}_1$ in~$\str M[X]$.
So for the Lemma's second claim, it is enough to show that $(Z \oplus X)' \Tredeq Z' \oplus X$.
Consider the definition of $(Z \oplus X)' \restd b$ given in the line displayed above.
Since $H$ is $\Delta^{0,Z}_2$, we can compute this set from $Z' \oplus X$ by
 first asking for the value of $H(b)$;
 then asking for $X\restd H(b)$ and $Z\restd H(b)$; and
 finally checking whether each $\Phi_e^{Z\restd H(b)}(X\restd H(b);e)$ halts or not.
\end{proof}

%

Now we are ready for the actual construction.\addtocounter{theorem}{-3}
\begin{proof}[Proof of Theorem~\ref{thm:model-expansion}]
Fix a principal countable model
     $\str M=(M,\{Y\subseteq M:Y\Tredeq Z\})\models\RCA+\Sigma^0_2\hyp\WPHP$ and
    a rational $\epsilon\in(0,{1/2})$.
Consider any regular $\Delta^0_2(\str M)$ tree~$T$
 of measure $\geq1/2+\epsilon$.
We will force a branch through~$T$
 while preserving $\RCA+\Sigma^0_2\hyp\WPHP$.

Let $\tilde T$ be the result of applying Lemma~\ref{lem:obese-tree-GL1}
 to $\str M$, $Z$ and~$\epsilon$.
One can readily see that $T^*\defeq T\cap\tilde T$ is
 a regular $\Delta^0_2(\str M)$ tree of measure
  at least $(1-\epsilon)+(1/2+\epsilon)-1=1/2$ by the inclusion--exclusion principle.
Forcing conditions are pairs $(\sigma,S)$ where
 $\sigma\in T^*$ and
 $S$~is a regular $\Delta^0_2(\str M)$ subtree
  of $T^*_\sigma$ with measure $\geq1/2$.
A condition $(\hat\sigma,\hat S)$ \defm{extends} another condition $(\sigma,S)$
 if $\hat\sigma=\sigma\tau$ and $\hat S\subseteq S_\tau$
  for some $\str M$-finite binary string $\tau$.
Every sufficiently generic sequence $(\sigma_i,S_i)_{i\in\IN}$ of conditions
 gives rise to a subset $G\subseteq M$
  whose characteristic function is $\bigcup_{i\in\IN}\sigma_i$.

\begin{claim}
For every $\ell\in M$ and every condition $(\sigma,S)$,
 there is a condition $(\hat\sigma,\hat S)$ extending~$(\sigma,S)$
  in which the length of $\hat\sigma$ is at least~$\ell$.
Hence if $G$ is sufficiently generic then $\str{M}[G] \models \RCA$.
\end{claim}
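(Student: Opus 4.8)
The plan is to establish the density claim directly, by showing that given any condition $(\sigma, S)$ and any target length $\ell \in M$, one can extend to a condition whose stem has length at least $\ell$. First I would observe that if $|\sigma| \geq \ell$ already, then $(\sigma, S)$ itself works, so assume $|\sigma| < \ell$. The subtree $S$ is a regular $\Delta^0_2(\str M)$ tree of measure $\geq 1/2$; in particular $S$ is infinite, so it has nodes of every length. Since $S$ has measure $\geq 1/2$ and $\ell - |\sigma|$ is some fixed element of $M$, by averaging there must be a node $\tau$ of length $\ell - |\sigma|$ with $S_\tau$ of measure $\geq 1/2$ — more carefully, using the Partial Lebesgue Density Lemma~\ref{lem:WPH-partial-Density} (with $n = 2$, and $m$ chosen so that $1 - 1/m \geq 1/2$), applied to $S$, we obtain some string $\tau$ with $|\tau| \geq \ell - |\sigma|$ such that $S_\tau$ has measure $\geq 1/2$ (indeed close to $1$). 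Then $(\sigma\tau, S_\tau)$ — or more precisely $(\sigma\tau, S_\tau \cap T^*_{\sigma\tau})$ after intersecting to land inside $T^*$, which does not lower the measure below $1/2$ by the same inclusion–exclusion bookkeeping as in the main proof — is a condition extending $(\sigma, S)$ whose stem has length $|\sigma| + |\tau| \geq \ell$.

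Next I would deduce the genericity consequence. A sufficiently generic $G$ meets, for each $\ell \in M$, the dense set $D_\ell$ of conditions with stem of length $\geq \ell$; hence $G \restd \ell$ is determined (it equals $\sigma_i \restd \ell$ for any $i$ with $|\sigma_i| \geq \ell$), so $G$ is a genuine element of $2^M$ and a regular set — the sequence $(G \restd \ell : \ell \in M)$ is computed from the generic filter. To see $\str M[G] \models \RCA$, the point is that $G \in [T^*] \subseteq [\tilde T]$, so by Lemma~\ref{lem:obese-tree-GL1} we have $(Z \oplus G)' = \{e : \Phi_e^Z(G \restd H(e); e)\defd\}$ in $\str M[G]$ for the non-decreasing $\Delta^{0,Z}_2$ function $H$ furnished by that lemma. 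This is exactly the hypothesis of Lemma~\ref{lem:obese-tree-ISigma1}, which then gives $\str M[G] \models \RCA$ directly.

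The main obstacle I anticipate is making sure the measure of the extending subtree stays $\geq 1/2$ after the two successive intersections (first with $T^*_{\sigma\tau}$, and recalling $T^* = T \cap \tilde T$ is itself already an intersection). One has to track the inclusion–exclusion bounds: $T$ has measure $\geq 1/2 + \epsilon$, $\tilde T$ has measure $\geq 1 - \epsilon$, so $T^*$ has measure $\geq 1/2$, and within the stem $\sigma$ of a condition the relative tree $T^*_\sigma$ is only guaranteed to have measure $\geq 1/2$ — there is no slack left. This is precisely why the Partial Lebesgue Density Lemma is invoked with $m$ large: it lets us find $\tau$ with $\mu([S_\tau]) \geq 1 - 1/m$ arbitrarily close to $1$, leaving enough room that after re-intersecting with $T^*_{\sigma\tau}$ (whose measure relative to $S_\tau$'s ambient space we must estimate) the measure is still $\geq 1/2$. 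I would set this up by carrying a slightly stronger invariant along the forcing — e.g.\ that each condition's subtree $S$ has measure bounded strictly above $1/2$, or by absorbing the $\epsilon$ margin into the bookkeeping — so that the density argument has something to spend. A secondary, more routine point is checking that $S_\tau \cap T^*_{\sigma\tau}$ is again a regular $\Delta^0_2(\str M)$ tree, which follows since both $S_\tau$ and $T^*_{\sigma\tau}$ are, and regular $\Delta^0_2$ trees are closed under intersection with the levelwise-coded finiteness preserved.
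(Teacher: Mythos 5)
Your overall approach matches the paper's: apply the Partial Lebesgue Density Lemma~\ref{lem:WPH-partial-Density} to $S$ to push the stem past length $\ell$, then invoke Lemmas~\ref{lem:obese-tree-GL1} and~\ref{lem:obese-tree-ISigma1} for the genericity consequence. However, the ``main obstacle'' you spend most of your energy on is not an obstacle, and the justification you give for it would not actually work if it were needed. By the definition of a forcing condition, $S$ is a subtree of $T^*_\sigma$. This immediately yields $S_\tau\subseteq T^*_{\sigma\tau}$: if $\rho\in S_\tau$ then $\tau\rho\in S\subseteq T^*_\sigma$, so $\sigma\tau\rho\in T^*$, so $\rho\in T^*_{\sigma\tau}$. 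Hence $S_\tau\cap T^*_{\sigma\tau}=S_\tau$ and there is no ``second intersection'' to track at all; $(\sigma\tau,S_\tau)$ is already a condition once $\tau\in S$ (which the proof of Lemma~\ref{lem:WPH-partial-Density} in fact guarantees, since the $\tau$ it produces comes from a level of $S$) and $\mu([S_\tau])\geq1/2$. Taking $m=2$ in the lemma suffices; you do not need to take $m$ large, and you do not need to carry a ``strictly above $1/2$'' invariant or absorb the $\epsilon$ margin through the iteration.

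The appeal to ``inclusion--exclusion bookkeeping as in the main proof'' is also misleading: inclusion--exclusion would bound $\mu([S_\tau\cap T^*_{\sigma\tau}])$ from below by $\mu([S_\tau])+\mu([T^*_{\sigma\tau}])-1$, but you have no independent lower bound on $\mu([T^*_{\sigma\tau}])$ other than the one that comes precisely from the containment $S_\tau\subseteq T^*_{\sigma\tau}$. So as written the measure-preservation step is justified by a computation that does not go through; the correct justification is simply the containment built into the definition of conditions. The second half of the claim---that $G\in[T^*]\subseteq[\tilde T]$ and hence $\str M[G]\models\RCA$ by Lemmas~\ref{lem:obese-tree-GL1} and~\ref{lem:obese-tree-ISigma1}---you handle exactly as the paper does.
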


\begin{poclaim}
The first part is a direct consequence of
 the Partial Lebesgue Density Lemma \ref{lem:WPH-partial-Density}.
Thus every sufficiently generic set $G\in[T^*]\subseteq[\tilde T]$.
So $\str M[G]\models\RCA$ by Lemma~\ref{lem:obese-tree-GL1} and Lemma~\ref{lem:obese-tree-ISigma1}.
\end{poclaim}

In view of Lemma~\ref{lem:WPH-vs-convergence},
           Lemma~\ref{lem:obese-tree-ISigma1}, and
           the claim above,
 it suffices to show that a sufficiently generic extension~$\str M[G]$
  satisfies the following requirements
   for all Turing functionals $\Phi$ and all $b\in M$:
   \begin{equation*}
    \mathcal{R}_{\Phi,b}\colon
    \text{$(\exists a < b) \Phi^{ Z' \oplus G}(a) \undefd$
       or $(\exists \ell) \bigl|\bigl\{a < b:
            \Phi^{Z'}(G\restd \ell; a) \defd
           \bigr\}\bigr| \geq \frac{b}{2}$}.
   \end{equation*}
The next two claims demonstrate how one can satisfy these requirements and
 thus finish the proof.
Fix a Turing functional~$\Phi$ and $b\in M$.
Pick any condition $(\sigma,S)$.
For each $a<b$, let
$$
    S_a = \{\tau \in S: \str M\models\Phi^{Z'}(\sigma\tau; a) \undefd\}.
$$
As $S$ is a regular $\Delta^0_2(\str M)$ tree, so is $S_a$.

\begin{claim}\label{clm:forcing-div}
If $a\in M$ with $a<b$ and $\mu([S_a])\geq 2^{-n}$ for some $n \in \mathbb{N}$,
 then $(\sigma,S)$ has an extension forcing $\Phi^{Z'\oplus X}(a)\undefd$.
\end{claim}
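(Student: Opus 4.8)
The plan is to produce the required extension by a single application of the Partial Lebesgue Density Lemma~\ref{lem:WPH-partial-Density} to the tree~$S_a$. Since $\mu([S_a])\geq2^{-n}$ with $n$ standard, $S_a$~is a regular $\Delta^0_2(\str M)$ tree of measure $\geq1/2^n$, so that lemma---applied with the standard parameters $2^n$ and~$2$---yields a string~$\rho$ such that $(S_a)_\rho$ has measure $\geq1-1/2=1/2$; by the `furthermore' clause we may in addition insist that $|\rho|$ be as large as we like, though this is not needed here. I would set $\hat\sigma=\sigma\rho$ and $\hat S=(S_a)_\rho$.

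Next I would check that $(\hat\sigma,\hat S)$ is a condition extending $(\sigma,S)$. As $\rho\in S_a\subseteq S$ and $S$ is a subtree of~$T^*_\sigma$, we get $\hat\sigma=\sigma\rho\in T^*$ and $\hat S=(S_a)_\rho\subseteq S_\rho\subseteq T^*_{\sigma\rho}=T^*_{\hat\sigma}$; the tree $\hat S$ inherits regularity and $\Delta^0_2(\str M)$-definability from~$S_a$, and it has measure $\geq1/2$ by the choice of~$\rho$, so $(\hat\sigma,\hat S)$ is indeed a condition. Taking $\tau=\rho$ in the definition of `extends' shows it extends $(\sigma,S)$.

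The substantive point is to verify that $(\hat\sigma,\hat S)$ forces $\Phi^{Z'\oplus X}(a)\undefd$, i.e.\ that $\Phi^{Z'}(X\restd\ell;a)\undefd$ for every $\ell\in M$ and every path~$X$ lying in the space of the condition (in particular for the generic~$G$). Unwinding definitions, $\hat S=\{\tau:\rho\tau\in S_a\}=\{\tau:\rho\tau\in S\text{ and }\Phi^{Z'}(\hat\sigma\tau;a)\undefd\}$, so every node of~$\hat S$ already witnesses divergence; this disposes of the case $\ell>|\hat\sigma|$. For $\ell\leq|\hat\sigma|$, the string $X\restd\ell$ is an initial segment of $\hat\sigma=\sigma\rho$, hence is either of the form $\sigma\rho'$ with $\rho'$ a prefix of~$\rho$---in which case $\rho'\in S_a$ because $S_a$~is a tree, giving $\Phi^{Z'}(X\restd\ell;a)\undefd$---or a prefix of~$\sigma$, in which case divergence follows from $\rho\in S_a$ by monotonicity of Turing functionals (a halting computation on $\sigma\restd\ell$ would, with more oracle bits and more steps, force one on $\sigma\rho$, contradicting $\rho\in S_a$). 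Hence $\Phi^{Z'}(X\restd\ell;a)\undefd$ for all~$\ell$, so $\Phi^{Z'\oplus X}(a)\undefd$.

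The only step with any real content is the appeal to Lemma~\ref{lem:WPH-partial-Density}: it is essential that $n$ be standard, since that lemma---and through it $\Sigma^0_2\hyp\WPHP$---is available only for standard parameters, which is exactly why the claim is stated for $n\in\IN$. Everything else is routine manipulation of the subtrees $T_\sigma$ and of halting computations.
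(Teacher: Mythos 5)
Your proof is correct and takes exactly the approach the paper intends: the paper's own proof of this claim is simply the sentence ``Immediate from the Partial Lebesgue Density Lemma,'' and your write-up is precisely the unwinding of that one-liner — apply Lemma~\ref{lem:WPH-partial-Density} to the regular $\Delta^0_2$ tree $S_a$ (using that $n$, hence $2^n$, is standard) to get $\rho\in S_a$ with $\mu([(S_a)_\rho])\geq1/2$, take $(\hat\sigma,\hat S)=(\sigma\rho,(S_a)_\rho)$, and check by downward closure of $S_a$ and monotonicity of $\Phi$ that divergence is forced. The details you supply (that $(\hat\sigma,\hat S)$ is a legitimate extension, that $S_a$ is a downward-closed regular $\Delta^0_2$ subtree so the lemma applies, and the $\ell\leq|\hat\sigma|$ versus $\ell>|\hat\sigma|$ case split) are all accurate and are exactly what the paper leaves implicit.
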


\begin{poclaim}
Immediate from the Partial Lebesgue Density Lemma \ref{lem:WPH-partial-Density}.
\end{poclaim}

\begin{claim}\label{clm:forcing-card}
Suppose that $\mu([S_a])< 2^{-n}$ for all $a<b$ and all $n \in \mathbb{N}$.
Then $(\sigma,S)$ has an extension $(\hat{\sigma},\hat{S})$ such that
$$
    |\{a < b: \str M\models\Phi^{Z'}(\hat{\sigma}; a) \defd\}| \geq \frac{b}{2}.
$$
\end{claim}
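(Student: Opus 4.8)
The plan is to exploit the hypothesis that every $\mu([S_a])$ is infinitesimally small---below every standard $2^{-n}$---by locating a single level $\ell^*$ at which most of the trees $S_a$ are simultaneously very thin, then running a counting argument to find many nodes at level $\ell^*$ above which $\Phi^{Z'}(\sigma\tau;a)\defd$ for at least half of the $a<b$, and finally trimming $S$ to the subtree generated by those nodes, which still carries enough measure for a second application of the Partial Lebesgue Density Lemma~\ref{lem:WPH-partial-Density}. Two elementary facts drive the argument. First, if $\tau'\subseteq\tau$ and $\Phi^{Z'}(\sigma\tau';a)\defd$, then (more steps, a longer consistent oracle) $\Phi^{Z'}(\sigma\tau;a)\defd$; equivalently $\{a<b:\tau\in S_a\}\subseteq\{a<b:\tau'\in S_a\}$, so the set of $a$ on which $\Phi^{Z'}(\sigma\tau;a)$ converges only grows along $\tau$. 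Second, since each $S_a$ is a regular $\Delta^0_2$ tree, the ratio $|S_a\cap2^\ell|/2^\ell$ is non-increasing in $\ell$.

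First I would produce $\ell^*$. By hypothesis $\mu([S_a])<2^{-7}$ for all $a<b$, so each such $a$ has a least level $g(a)$ with $|S_a\cap2^{g(a)}|/2^{g(a)}<1/128$, and $g\colon b\to M$ is $\Delta^0_2$. From $\Sigma^0_2\hyp\WPHP$ and Lemma~\ref{lem:WPH-basics}(\ref{lem:WPH-basics/1}) one gets $\str M\models\Sigma^0_2\colon b\to(2)^1_{(127/128)b}$, so Lemma~\ref{lem:WPH-vs-convergence}(\ref{lem:WPHP-vs-convergence/4}) applied to $g$ yields an $\str M$-finite $A\subseteq b$ with $|A|\geq(127/128)b$ and a level $\ell^*$ with $g(a)<\ell^*$ for all $a\in A$. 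By monotonicity $|S_a\cap2^{\ell^*}|/2^{\ell^*}<1/128$ for $a\in A$, while the at most $2b/128$ remaining $a$ contribute at most $1$ each to the ratio, so $\sum_{a<b}|S_a\cap2^{\ell^*}|<(1/64)\,b\,2^{\ell^*}$.

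Next comes the counting. Summing over $\tau\in S\cap2^{\ell^*}$ we get $\sum_\tau|\{a<b:\tau\in S_a\}|=\sum_{a<b}|S_a\cap2^{\ell^*}|<(1/64)\,b\,2^{\ell^*}$, so $\mathrm{Bad}:=\{\tau\in S\cap2^{\ell^*}:|\{a<b:\tau\in S_a\}|>b/2\}$ has $|\mathrm{Bad}|<2^{\ell^*}/32$; this set is $\str M$-finite since the $S_a$ are uniformly regular $\Delta^0_2$, whence $\langle S_a\cap2^{\ell^*}:a<b\rangle$ is $\str M$-finite, and $\Sigma^0_2\hyp\WPHP$ (which implies the cardinality scheme for $\Sigma^0_2$) supplies the cardinalities $|\{a<b:\tau\in S_a\}|$. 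Put $G^*=(S\cap2^{\ell^*})\setminus\mathrm{Bad}$ and let $S^*=\{\rho\in S:|\rho|\leq\ell^*\text{ or }\rho\restd\ell^*\in G^*\}$, a regular $\Delta^0_2$ subtree of $S$. Since $\mu([S])\geq1/2$ and passing to $S^*$ deletes at most $|\mathrm{Bad}|<2^{\ell^*}/32$ of the level-$\ell^*$ cylinders of $S$, a level-by-level count gives $|S^*\cap2^k|/2^k\geq1/2-1/32>1/4$ for every $k$, so $\mu([S^*])\geq1/4$. Now apply Lemma~\ref{lem:WPH-partial-Density} to $S^*$ with the lower-bound parameter in its final clause chosen to exceed $\ell^*$, obtaining a node $\rho$ with $|\rho|>\ell^*$ and $\mu([(S^*)_\rho])\geq1/2$. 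Because $|\rho|>\ell^*$ and $\rho\in S^*$ we have $\rho\restd\ell^*\in G^*$, so $(S^*)_\rho=S_\rho$ and, by the first elementary fact, $|\{a<b:\rho\in S_a\}|\leq|\{a<b:\rho\restd\ell^*\in S_a\}|\leq b/2$; hence $|\{a<b:\Phi^{Z'}(\sigma\rho;a)\defd\}|=b-|\{a<b:\rho\in S_a\}|\geq b/2$. Taking $(\hat\sigma,\hat S)=(\sigma\rho,S_\rho)$ gives the required extension.

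The main obstacle, and the only genuine use of the hypotheses, is producing $\ell^*$: a priori the $S_a$ could thin out at incomparable rates, and it is precisely $\Sigma^0_2\hyp\WPHP$ (via Lemmas~\ref{lem:WPH-basics} and~\ref{lem:WPH-vs-convergence}) that collapses this into a single uniform level. A secondary subtlety is that counting alone yields many ``good'' level-$\ell^*$ nodes with no control on the measure above them, whereas a forcing condition must have measure $\geq1/2$; this is reconciled by noting that $S^*$ still has non-infinitesimal (indeed $>1/4$) measure and invoking Partial Lebesgue Density a second time, goodness being inherited upward along $\rho$.
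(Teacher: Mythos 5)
The proposal is correct and follows essentially the same route as the paper's proof: use $\Sigma^0_2\hyp\WPHP$ via Lemmata~\ref{lem:WPH-basics}(\ref{lem:WPH-basics/1}) and \ref{lem:WPH-vs-convergence}(\ref{lem:WPHP-vs-convergence/4}) to collapse the per-$a$ thinning levels of the trees $S_a$ to a single level $\ell^*$, then double-count pairs $(a,\tau)$ to show the ``bad'' nodes at level $\ell^*$ occupy measure $<1/4$, and finally apply the Partial Lebesgue Density Lemma~\ref{lem:WPH-partial-Density} to the remaining subtree of measure $>1/4$ to extract a suitable $(\hat\sigma,\hat S)$. The only differences are cosmetic: the student fixes $n=7$ from the outset rather than optimizing to $n=4$ afterward, and spells out the restriction $S^*$ to cones above good level-$\ell^*$ nodes a bit more explicitly than the paper does.
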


\begin{poclaim}
Work in $\str M$.
For each~$\ell$, define
 \begin{equation*}
  q_\ell=\frac{|\{\tau\in S\cap2^\ell:
   \text{$\Phi^{Z'}(\sigma\tau;a)\undefd$ for more than half of all $a<b$}
  \}|}{2^\ell}.
 \end{equation*}
If there is an $\ell^*$ such that $q_{\ell^*}< 1/4$,
 then we can define $(\hat\sigma,\hat S)=(\sigma\tau,S_\tau)$,
  using a $\tau\in S$ of length $\geq\ell^*$ obtained by
   applying the Lebesgue Density Lemma~\ref{lem:WPH-partial-Density} to
    the regular $\Delta_2^{0,Z}$~tree
    \begin{equation*}
     S\cap\bigl(2^{<\ell^*}\cup\{\tau\in2^{\geq\ell^*}:
      \text{$\Phi^{Z'}(\sigma\tau;a)\defd$
            for more than half of all $a<b$}
     \}\bigr),
    \end{equation*}
    which has measure $\geq 1/4$.
So we only need to show that such an $\ell^*$ exists.

Fix any value $n \in \mathbb{N}$.
We know by the premise of this claim that
 for each $a<b$ there is an~$\ell$ such that
  \begin{equation*}
   \frac{|\{\tau\in S\cap2^\ell:\Phi^{Z'}(\sigma\tau;a)\undefd\}|}{2^\ell}
   < 2^{-n};
  \end{equation*}
 moreover, there exists a $\Delta^{0,Z}_2$ function taking $a$ to such an $\ell$.
Note that if $\ell$ satisfies the inequality above
 then so does every $\ell' > \ell$.
Thus we may apply $\Sigma^0_2\hyp\WPHP$
 together with Lemmata~\ref{lem:WPH-basics}(\ref{lem:WPH-basics/1}) and
   \ref{lem:WPH-vs-convergence}(\ref{lem:WPHP-vs-convergence/4})
  to obtain an $\ell^*$ such that
   the inequality displayed above holds with $\ell=\ell^*$
    for more than $(1-2^{-n})b$ many $a<b$.

Define three coded sets $A,B,C$ as follows:
 \begin{align*}
  A&=S \cap 2^{\ell^*},\\
  B&=\{\tau\in A:\text{$\Phi^{Z'}(\sigma\tau; a)\undefd$ 
                       for more than half of all $a<b$}\},\\
  C&=\{(a,\tau)\in[0,b-1]\cross A:\Phi^{Z'}(\sigma\tau; a)\defd\}.
 \end{align*}
On the one hand, by the choice of $\ell^*$,
$$
  |C| > (1-2^{-n})b (|A|-2^{\ell^*-n}).
$$
On the other hand,
 if we count the elements $(a,\tau)\in C$ with $\tau\in B$ and
             those with $\tau\in A-B$ separately,
 then we get
 $$
  |C| < \frac b2|B| + b(|A|-|B|) = b|A| - \frac b2|B|.
 $$
Putting the two inequalities together, we deduce that
 $$
  |B| < |A| 2^{-n+1} + (1-2^{-n})2^{\ell^*-n+1} \leq 2^{\ell^*} (2^{-n+2} - 2^{-2n+1}).
 $$
In particular, when we use the value $n=4$,
 this implies $q_{\ell^*} = \card B/2^{\ell^*} < 1/4$, as required.
\end{poclaim}
%
\end{proof}

\section{Strength}\label{s:wph-fragments}

Here we compare $\Sigma_{n+1}\hyp\WPHP$ with $\bd\Sigma_{n+1}$ and
 the cardinality scheme for $\Sigma_{n+1}$ formulas ($\Cd\Sigma_{n+1}$)
  introduced by Seetapun and Slaman~\cite{Seetapun.Slaman:1995.Ramsey}.
For a set of formulas $\Gamma$, \emph{the cardinality scheme for $\Gamma$},
 denoted by $\Cd\Gamma$, asserts that
  every total injection defined by a formula in $\Gamma$ has an unbounded range.
(Note that this is different from what Kaye defines in his paper~\cite[Section~3.1]{art:Th(kappa-like)}.)
In view of Theorem~\ref{thm:B=PHP},
$$
  \ind\Delta_0+\exp \vdash (\bd\Sigma_{n+1} \to \Sigma_{n+1}\hyp\WPHP) \wedge (\Sigma_{n+1}\hyp\WPHP \to \Cd\Sigma_{n+1}).
$$
The main results of this section state that
 these arrows do not reverse for any $n\in\IN$.

We will prove the independence of $\Sigma_{n+1}\hyp\WPHP$ over $\ind\Sigma_n+\exp + \Cd\Sigma_{n+1}$
 using two drastically different model-theoretic constructions.
In the first construction, we cofinally extend
 any countable $\str M \models \ind\Sigma_n+\exp+\neg \Sigma_{n+1}\hyp\WPHP$
  to $\str N \models \ind\Sigma_n+\exp+\neg \Sigma_{n+1}\hyp\WPHP + \Cd\Sigma_{n+1}$
   by modifying a coded ultrapower construction due to Paris \cite{Paris:1981}.
In the second construction,
 we build an end-extension chain $(\str M_i: i \in \mathbb{N})$ of models of~$\PA$
  such that $\bigcup_i \str M_i \models \ind\Sigma_n+\exp+\neg \Sigma_{n+1}\hyp\WPHP + \Cd\Sigma_{n+1}$.
The first construction has better control over which~$r$
  we have $\fa b{(\Sigma_{n+1}\colon rb\to (2)^1_b)}$ in the model,
 while the second construction naturally gives singular-like models.
Both constructions will be revisited in Section~\ref{s:further}.

For the independence of $\bd\Sigma_{n+1}$ over $\ind\Sigma_n + \exp + \Sigma_{n+1}\hyp\WPHP$,
 we modify Paris's coded ultrapower construction again to
  cofinally extend any given countable $\str M \models \ind\Sigma_n + \exp + \neg \bd\Sigma_{n+1}$
   to $\str N \models \ind\Sigma_n + \exp + \Sigma_{n+1}\hyp\WPHP + \neg \bd\Sigma_{n+1}$.

As the reader may have already noticed,
 Paris's coded ultrapower construction is one of the key techniques we will use.
A model of arithmetic $\str N$ is a \emph{cofinal extension} of another model $\str M$,
  denoted $\str M \cfsub\str N$,
 if $M \subseteq N$ and every $b \in N$ is below some $a \in M$.
We also write $\str M \elemsub_{\Sigma_{n}, \cf}\str N$
 if $\str M \subseteq_{\cf}\str N$ and $\str M \elemsub_{\Sigma_{n}}\str N$, etc.
The following theorem, which is extracted from a proof in Paris's paper~\cite[Theorem~11]{Paris:1981},
 summarizes the major features of the construction.
The $n=0$ case is not mentioned there,
 but it can be proved in the same way.

\begin{theorem}[Paris \cite{Paris:1981}]\label{thm:Paris-ultrapower}
Fix $n \in \IN$ and $\str M \models \ind\Sigma_n + \exp$.
Suppose $b\in M$ and $\UF$ is an ultrafilter on the Boolean algebra of $\str M$-finite subsets of $b$.
Let
$$
  \str N = \str M \cap M^b / \UF.
$$
Then
\begin{enumerate}
  \item \L o\'s's Theorem holds for every $\Sigma_n$ or $\Pi_n$ formula~$\theta$,\label{thm:Paris-ultrapower/1}
   i.e.,~whenever $[f_0],[f_1],\ldots,[f_{k-1}] \in N$,
    \begin{equation*}
     \str N \models \theta([f_0],[f_1],\ldots,[f_{k-1}])
    \end{equation*}
    if and only if
    \begin{equation*}
     \{i \in M:\text{$i < b$ and $\str M \models \theta(f_0(i),\ldots,f_{k-1}(i))$}\} \in \UF;
    \end{equation*}
  \item $\str M \elemsub_{\Sigma_{n+1}, \cf}\str N \models \ind\Sigma_n + \exp$.\label{thm:Paris-ultrapower/2} \qed
\end{enumerate}
\end{theorem}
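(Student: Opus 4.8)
The plan is to present $\str N$ as Paris's coded ultrapower and to read off every clause of the theorem from a \L o\'s-type theorem, which is exactly part~\partref{thm:Paris-ultrapower/1}. First I would pin down the construction: the elements of $\str N$ are $\sim$-classes $[f]$ of $\str M$-finite functions $f\colon b\to M$, where $f\sim g$ iff $\{i<b:f(i)=g(i)\}\in\UF$. Note this set --- and more generally the \L o\'s set of any $\Delta_0$ formula applied to coded functions --- is $\str M$-finite by bounded $\Delta_0$-comprehension in $\ind\Delta_0+\exp$, so asking whether it lies in $\UF$ makes literal sense; for $\Sigma_n$ or $\Pi_n$ formulas with $n\geq1$ the \L o\'s set need not be $\str M$-finite, and ``$\in\UF$'' there abbreviates ``contains an $\str M$-finite member of $\UF$''. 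The constants $0^{\str N},1^{\str N}$ and operations $+^{\str N},\times^{\str N}$ are defined pointwise, and $[f]<^{\str N}[g]$ iff $\{i<b:f(i)<g(i)\}\in\UF$; well-definedness uses that $\UF$ is a filter, and the trichotomy of $<^{\str N}$ uses that it is an ultrafilter.

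I would prove the \L o\'s theorem for $\Sigma_n\cup\Pi_n$ formulas by induction on complexity. Atomic formulas come from the pointwise definitions; conjunction uses closure of $\UF$ under finite intersection; negation is the only step invoking the ultrafilter property, and disjunction follows. For a bounded quantifier $\exists x<t\,\phi$ the nontrivial (forward) direction picks, for each $i$ in the ($\str M$-finite) \L o\'s set, the least $x<t(f(i))$ with $\phi$; this least-witness function is $\Delta_0$-definable and bounded, hence $\str M$-finite, so the inductive hypothesis applies. The step from $\Pi_{n-1}$ to $\Sigma_n$ (so $n\geq1$) is where $\ind\Sigma_n$ in $\str M$ enters: given $A\in\UF$ on which $\exists w\,\psi(w,f(i))$ holds, $\bd\Sigma_n$ (a consequence of $\ind\Sigma_n$) bounds the witnesses uniformly over $A$, and strong $\Sigma_n$-collection then yields an $\str M$-finite witness function; the $\forall w$ case is dual. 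For $n=0$ only $\Delta_0$ formulas are asserted and only $\ind\Delta_0+\exp$ is used.

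For part~\partref{thm:Paris-ultrapower/2}: the map $a\mapsto[\,i\mapsto a\,]$ embeds $\str M$ into $\str N$ (by \L o\'s for atomic formulas), and I identify $\str M$ with its image. \L o\'s makes $\Sigma_n$ and $\Pi_n$ formulas with parameters in $\str M$ absolute, their \L o\'s set being $b\in\UF$. To get $\Sigma_{n+1}$-elementarity, write a $\Sigma_{n+1}$ formula as $\exists y\,\psi$ with $\psi\in\Pi_n$: upward preservation uses $\Pi_n$-absoluteness at an $\str M$-witness, and downward preservation is immediate from \L o\'s for $\Pi_n$, since a witness $[f]\in N$ has a nonempty \L o\'s set and hence an $i_0<b$ with $\str M\models\psi(f(i_0),\ldots)$. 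Cofinality $\str M\cfsub\str N$ holds because every coded $f\colon b\to M$ is bounded by its own code. Finally $\PAminus$ and $\exp$ transfer to $\str N$, each being a $\Pi_1$ or $\Pi_2$ sentence whose witnesses can be supplied pointwise and bounded by the relevant codes (e.g.\ $2^{f(i)}<2^{c}$ where $c$ codes $f$), so \L o\'s for the $\Delta_0$ matrices suffices.

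The substantive point, and the one I expect to be the main obstacle, is $\str N\models\ind\Sigma_n$ for $\Sigma_n$ instances with parameters in $N\setminus M$. The idea is to reduce such an instance with parameter $[\bar f]$ to a bounded instance of $\ind\Sigma_n$ holding inside $\str M$ along $\UF$-many coordinates, transferring back by \L o\'s. For $n=0$ this is clean: the failure set $\{i<b:\str M\not\models[\varphi(0,\bar f(i))\wedge(\forall x<c)(\varphi(x,\bar f(i))\to\varphi(x+1,\bar f(i)))]\}$ is $\Delta_0$, hence $\str M$-finite, so an ultrafilter case split together with pointwise least-transition-point functions produces the contradiction. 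For $n\geq1$ that set is no longer $\str M$-finite and $\UF$ need not decide it, so genuine care is required; this is precisely the content of Paris's argument~\cite[Theorem~11]{Paris:1981}, which keeps the auxiliary witness functions within the complexity that $\ind\Sigma_n$ can handle, and the case $n=0$ then runs along the same lines.
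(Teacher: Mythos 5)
The paper does not prove this theorem; it is stated with a terminal \textup{\qedsymbol} as a result extracted from Paris~\cite[Theorem~11]{Paris:1981}, with the remark that the $n=0$ case ``can be proved in the same way.'' So there is no in-paper proof to compare against, and your proposal should be judged on its own terms as a reconstruction of Paris's argument.

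As such a reconstruction it is mostly sound: the description of the coded ultrapower, the observation that Łoś sets of $\Delta_0$ formulas are $\str M$-finite while those of $\Sigma_n$/$\Pi_n$ formulas with $n\geq1$ generally are not (so that ``$\in\UF$'' has to be read as ``contains an $\str M$-finite member of $\UF$''), and the treatment of cofinality, of $\PAminus$ and $\exp$, and of the $\Sigma_{n+1}$-elementary embedding are all correct and correctly located. Two points deserve flagging, though. First, in the crucial $\Pi_{n-1}\to\Sigma_n$ step you write that after $\bd\Sigma_n$ bounds the witnesses, ``strong $\Sigma_n$-collection then yields an $\str M$-finite witness function.'' Strong $\Sigma_n$-collection is equivalent to $\bd\Sigma_{n+1}$ over $\ind\Delta_0+\exp$ and hence is \emph{not} available from the hypothesis $\ind\Sigma_n+\exp$; what is available, and what you actually need here, is the weaker fact that a function with $\Delta_n$-definable graph and $\str M$-finite domain and range (here, the least-witness function restricted to the coded $\UF$-set, bounded by the $\bd\Sigma_n$-bound) is itself $\str M$-finite, which $\ind\Sigma_n+\exp$ does prove. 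Relatedly, the remark that ``negation is the only step invoking the ultrafilter property'' is accurate only at the $\Delta_0$ level, where the Łoś set is coded and $\UF$ literally decides it; for $\Sigma_n$/$\Pi_n$ Łoś sets the fact that exactly one of the set or its complement contains a $\UF$-member is not a free consequence of $\UF$ being an ultrafilter but is itself part of what the induction establishes (via the two directions of Łoś for the formula and its negation, plus excluded middle in $\str N$). Second, as you yourself acknowledge, the heart of part~\partref{thm:Paris-ultrapower/2} --- that $\str N\models\ind\Sigma_n$ for $n\geq1$ with parameters from $N\setminus M$ --- is left as an IOU to Paris rather than carried out; that is the step where the complexity bookkeeping is genuinely delicate, and a full proof would need to spell out the coding of the relevant least-witness functions at each level (the $\mathrm{L}\Sigma_n$ reformulation of $\ind\Sigma_n$ is the natural route, with the graph of the minimizing function recognized as bounded $\Delta_n$). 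So: a reasonable and correctly aimed sketch, with a terminology slip on the collection principle and an acknowledged gap exactly where the paper also defers to Paris.
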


It is not hard to see
  (from Paris's original application or from our applications below)
 that $\Sigma_{n+1}$~elementarity here cannot be `improved' to
  $\Sigma_{n+2}$~elementarity in general.

\subsection{The cardinality scheme}\label{ss:wphp-csigma}

As alluded to above,
 we present two proofs of the following independence theorem.

\begin{theorem}\label{thm:CSigma-WPHP}
$\ind\Sigma_n + \exp + \Cd\Sigma_{n+1} \nproves \Sigma_{n+1}\hyp\WPHP$
 for any $n\in\IN$.
\end{theorem}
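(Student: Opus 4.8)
The goal is to build, for each $n\in\IN$, a model of $\ind\Sigma_n+\exp+\Cd\Sigma_{n+1}+\neg\Sigma_{n+1}\hyp\WPHP$; by soundness this yields the stated non-implication. The plan is a bookkeeping iteration of Paris's coded ultrapower (Theorem~\ref{thm:Paris-ultrapower}): one starts from a countable model in which $\Sigma_{n+1}\hyp\WPHP$ already fails, and then, stage by stage, destroys every potential counterexample to $\Cd\Sigma_{n+1}$ while carefully protecting one fixed counterexample to $\Sigma_{n+1}\hyp\WPHP$.

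\emph{The base model and the shape of the witness.} First I would fix a countable $\str M_0\models\ind\Sigma_n+\exp+\neg\Sigma_{n+1}\hyp\WPHP$. Such models exist -- this is strictly more than the failure of $\bd\Sigma_{n+1}$, since $\Sigma_{n+1}\hyp\WPHP$ is strictly weaker than $\bd\Sigma_{n+1}$ (Theorem~\ref{thm:WPHP-BSigma}), but the second construction below produces one from scratch. Using Lemma~\ref{lem:WPH-vs-convergence}(\ref{lem:WPHP-vs-convergence/2}) I would take the witness to be a $\Pi_n$ formula $\phi(x,y)$ and an element $b$ with $\str M_0\models\falt x{2b}\ex y\phi(x,y)$ but with no $\ell$ satisfying $\card{\{x<2b:\exlt y\ell\phi(x,y)\}}\geq b$. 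Granting $\ind\Sigma_n$, the set $\{x<2b:\exlt y\ell\phi(x,y)\}$ is coded for each $\ell$, uniformly and $\Delta_{n+1}$-ly in $\ell$, so the ``no bound'' clause is $\Pi_{n+1}$; the only genuinely $\Pi_{n+2}$ clause in the witness is the first one, $\falt x{2b}\ex y\phi(x,y)$.

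\emph{The iteration.} Next I would build a cofinal chain $\str M_0\elemsub_{\Sigma_{n+1},\cf}\str M_1\elemsub_{\Sigma_{n+1},\cf}\cdots$ of models of $\ind\Sigma_n+\exp$ by repeated application of Theorem~\ref{thm:Paris-ultrapower}, with the union $\str N\defeq\bigcup_i\str M_i$ satisfying $\Cd\Sigma_{n+1}$. A $\neg\Cd\Sigma_{n+1}$ witness is a pair $(\psi,c)$ with $\psi$ a $\Sigma_{n+1}$ formula that (in the current model) defines a total injection of the whole universe into $[0,c)$. At the stage assigned to $(\psi,c)$, if the current $\str M_i$ still believes this, I would choose the ultrafilter $\UF$ on the $\str M_i$-finite subsets of a suitably large index so that, in the resulting ultrapower, the new diagonal element has no $\psi$-value below $c$ -- the point being that ``$\exists y<c\,\psi([\mathrm{id}],y)$'' is a $\Sigma_{n+1}$ statement not decided by \L o\'s's theorem at level $\Sigma_n$ (Theorem~\ref{thm:Paris-ultrapower}(\ref{thm:Paris-ultrapower/1})), so the choice of $\UF$ is free to make it false, destroying totality of $\psi$. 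Since cofinal extensions add no elements above $M_i$, once a pair $(\psi,c)$ is treated it never revives, and diagonalization over all pairs gives $\str N\models\Cd\Sigma_{n+1}$; moreover $\str N\models\ind\Sigma_n+\exp$ by Theorem~\ref{thm:Paris-ultrapower}(\ref{thm:Paris-ultrapower/2}).

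\emph{Preservation, the main obstacle, and an alternative.} The injectivity of $\phi$'s induced map, and the $\Pi_{n+1}$ ``no bound'' clause, are preserved up the chain by $\Sigma_{n+1}$-elementarity, so $\str N\models\neg\Sigma_{n+1}\hyp\WPHP$ as soon as $\falt x{2b}\ex y\phi(x,y)$ survives to $\str N$. Here is the crux, and the main obstacle: exactly like the totality clause of a $\neg\Cd$ witness that we want \emph{destroyed}, the clause $\falt x{2b}\ex y\phi(x,y)$ that we want \emph{preserved} is $\Pi_{n+2}$ and hence not settled by $\Sigma_{n+1}$-elementarity -- so at each stage the choice of $\UF$ must simultaneously steer one such clause down and keep the other up. I would handle this by arranging each ultrafilter never to introduce an element below $2b$ at which $\phi$ fails: this is possible because $\ind\Sigma_n$, together with the $\neg\Sigma_{n+1}\hyp\WPHP$ hypothesis, turns the relevant subsets of $[0,2b)$ into coded objects over which the \L o\'s-style decisions (after bounding witnesses by $\bd\Sigma_n$) can be arranged coherently. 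Writing out this simultaneous bookkeeping is where the real work of the first proof lies. The second proof replaces it with an $\omega$-chain of end extensions of models of $\PA$ whose union is singular-like: there the cofinal $\omega$-structure directly supplies -- using that each member is only a $\Sigma_n$-elementary, not $\Sigma_{n+1}$-elementary, cut of the union -- an injection $2b\to b$ witnessing $\neg\Sigma_{n+1}\hyp\WPHP$, while the end extensions are chosen to keep the range of every $\Sigma_{n+1}$-definable injection cofinal, giving $\Cd\Sigma_{n+1}$; the cost is controlling $\Sigma_{n+1}$-truth along the end-extension chain.
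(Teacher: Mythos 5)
You have the right shape for the first proof --- iterate Paris's coded ultrapower, killing $\Cd\Sigma_{n+1}$ counterexamples while preserving a $\neg\Sigma_{n+1}\hyp\WPHP$ witness --- and you have correctly put your finger on the obstacle: totality of the witness is $\Pi_{n+2}$ and is not handed to you by $\Sigma_{n+1}$-elementarity. But at exactly that point your argument evaporates. What the paper actually does in Lemma~\ref{lem:CSigma-cfx} is freeze the interval $[0,a]$ \emph{exactly}: the ultrafilter $\UF$ on $\mathcal P(a^eb)$ is thinned so that every $\str M$-finite $h\colon a^eb\to a$ is constant on a $\UF$-set, forcing any $[h]\in N$ below $a$ to equal some $i_k\in M$, so $[0,a]\evalin{\str M}=[0,a]\evalin{\str N}$ and hence $[0,2a]\evalin{\str M}=[0,2a]\evalin{\str N}$. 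That is much stronger than your proposal to ``never introduce an element below $2b$ at which $\phi$ fails'': no new elements appear below $a$ at all, so totality and injectivity of $F\colon 2a\to a$ transfer by $\Sigma_{n+1}$-elementarity over a domain that is identical in both models. Your claim that ``$\ind\Sigma_n$ together with the $\neg\Sigma_{n+1}\hyp\WPHP$ hypothesis turns the relevant subsets of $[0,2b)$ into coded objects over which the \L o\'s-style decisions can be arranged coherently'' is not an argument; the whole difficulty is precisely that \L o\'s's theorem (Theorem~\ref{thm:Paris-ultrapower}\partref{thm:Paris-ultrapower/1}) stops at $\Sigma_n$/$\Pi_n$ and gives you no leverage over $\Pi_{n+2}$ sentences without an explicit ultrafilter design like the paper's.

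Second, you never establish the base model. Your appeal to Theorem~\ref{thm:WPHP-BSigma} goes the wrong way: it produces models of $\Sigma_{n+1}\hyp\WPHP+\neg\bd\Sigma_{n+1}$, which is useless here. The paper imports the existence of a countable $\str M\models\ind\Sigma_n+\exp+\neg\Sigma_{n+1}\hyp\WPHP$ from Groszek--Slaman~\cite{Groszek.Slaman:1994}: a countable $\str M\models\ind\Sigma_1$ with a $\Delta_2$ injection of the whole universe into $[0,a]$, relativized to level $n$. This is doing double duty: it furnishes the witness $F\colon 2a\to a$, and it identifies the $a$ to pass to Theorem~\ref{thm:CSigma-WPHP-cfx} as the endpoint of the frozen interval, so that $F$ lives entirely inside what gets preserved.

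Third, your account of the second proof misdescribes what actually happens. The witness to $\neg\Sigma_{n+1}\hyp\WPHP$ is not supplied by the ``$\omega$-structure of the chain''; it is an externally chosen bijection $f\colon 2a\to a$ (available simply because $a$ is nonstandard in a countable model, so both intervals are countably infinite), whose graph $A$ is then made $\Sigma_{n+1}$-definable in a $\kappa$-like end extension $\str K$ via Blanck's flexible formula (Theorems~\ref{thm:flexextn} and~\ref{thm:sing-like}). And $\Cd\Sigma_{n+1}$ in $\str K$ is not arranged by ``choosing end extensions to keep ranges cofinal''; it is automatic from $\kappa$-likeness, since a definable total injection of a $\kappa$-sized universe into a bounded, hence $<\kappa$-sized, interval cannot exist.
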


The ultrapower proof, which is the more elementary one here, comes first.

\begin{lemma}\label{lem:CSigma-cfx}
Fix $n\in\IN$.
Let $\str M$ be a countable model of $\ind\Sigma_n+\exp$, and
 $a,b,e\in M$ with $a\geq2$ and $e\not\in\IN$.
Suppose we have a $\Sigma_{n+1}$~injection $F\colon a^e b \to b$ in~$\str M$.
Then there exists a countable $\str N\elemext_{\Sigma_{n+1},\cf}\str M$
  satisfying $\ind\Sigma_n+\exp$
 in which $[0,a]\evalin{\str M} = [0,a]\evalin{\str N}$
  and $F(c)\evalin{\str N}$ is undefined at some $c < a^e b$.
\end{lemma}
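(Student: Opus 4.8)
The plan is to realise $\str N$ as a Paris‑style coded ultrapower of $\str M$. Write $\varphi(x,y)\in\Sigma_{n+1}$ for the graph of $F$, so that in $\str M$ the $\Pi_{n+1}$ sentences saying ``$\varphi$ is the graph of an injection with values $<b$'' hold, while the $\Pi_{n+2}$ sentence $\forall x<a^e b\,\exists y\,\varphi(x,y)$ is the one we will destroy. Put $d=a^e b$ and let $\str N=\str M\cap M^d/\UF$ for an ultrafilter $\UF$ on the Boolean algebra of $\str M$‑finite subsets of $d$, to be built below; Theorem~\ref{thm:Paris-ultrapower} will then hand us $\str M\elemsub_{\Sigma_{n+1},\cf}\str N\models\ind\Sigma_n+\exp$ with no extra work, and $\str N$ is countable because $M$ is. The witness will be $c=[\mathrm{id}_d]$, the class of the coded identity function, which automatically satisfies $c<d$ in $\str N$.

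I would build $\UF$ as the filter generated by a descending chain $d=C_{-1}\supseteq C_0\supseteq C_1\supseteq\cdots$ of $\str M$‑finite subsets of $d$, treating one requirement per stage from a fixed enumeration of (i) all coded $D\subseteq d$ and (ii) all coded $g\colon d\to M$. At the stage for a set $D$, let $C_s$ be whichever of $C_{s-1}\cap D$ and $C_{s-1}\setminus D$ has the larger $\str M$‑finite cardinality; running $D$ through all singletons makes $\UF$ a nonprincipal ultrafilter. At the stage for a function $g$, split $C_{s-1}$ into the coded pieces $\{i\in C_{s-1}:g(i)=m\}$ for $m\leq a$ together with $\{i\in C_{s-1}:g(i)>a\}$, and let $C_s$ be a largest piece; this forces every class $[g]\leq a$ of $\str N$ to be some $m\leq a$ from $\str M$, so $[0,a]\evalin{\str N}=[0,a]\evalin{\str M}$. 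Each step shrinks $\card{C_{s-1}}$ by a factor of at most $a+2\leq a^2$ (using $a\geq2$), so after $s$ stages $\card{C_s}\geq a^e b/a^{2s}=a^{e-2s}b$; since $e\notin\IN$ while $s\in\IN$, this is still a nonstandard multiple of $b$, and in particular $\card{C_s}>b$ at every stage.

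It remains to check that $F(c)\undefd$ in $\str N$. Since the $\Pi_{n+1}$ statement $\varphi(x,y)\to y<b$ transfers to $\str N$, it suffices to exclude $\str N\models\exists y<b\,\varphi(c,y)$. Writing $\varphi=\exists z\,\psi$ with $\psi\in\Pi_n$ and using that every element of $\str N$ is the class of a coded function, \L o\'s's theorem for $\psi$ reduces this to the existence of a coded $g\colon d\to b$ and a coded $h\colon d\to M$ with $\{i<d:\str M\models\psi(i,g(i),h(i))\}\in\UF$, and that set lies inside $\{i<d:g(i)=F(i)\}$ by uniqueness of $F$‑values. But no coded $g$ can have $\{i<d:g(i)=F(i)\}\in\UF$: on that set $g$ agrees with the injection $F$, hence is injective with values $<b$, so by the pigeonhole principle for coded functions (available in $\ind\Delta_0+\exp$) the set contains no $\str M$‑finite subset of cardinality $>b$; since every $C_s\in\UF$ has $\card{C_s}>b$, no $C_s$, and hence no member of $\UF$, lies inside it. This also forces $c\notin M$, since otherwise $F$ would be defined at $c$ in $\str M$ and hence, by $\Sigma_{n+1}$‑elementarity, in $\str N$.

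The only real obstacle I anticipate is the cardinality bookkeeping in the second paragraph: the construction must be arranged so that the infinitely many shrinking steps never drive $\card{C_s}$ down to $b$ or below, and this is exactly where $e\notin\IN$ is indispensable—with a standard $e$ the accumulated factor $a^{2s}$ would eventually overtake $a^e b$, and the lemma is in fact false in that case. A subsidiary point, that $\UF$ correctly decides the $\Sigma_n$‑ and $\Pi_n$‑definable (not necessarily coded) index sets arising in \L o\'s's theorem, is already part of Theorem~\ref{thm:Paris-ultrapower} and needs no separate argument.
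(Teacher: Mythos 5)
Your proof is correct, and it reaches the conclusion by a route that genuinely differs from the paper's in how the undefinedness of $F(c)$ is established. The paper first invokes the Limit Lemma~\ref{limit-lemma} to get a $\Sigma_n$ approximation $F_0$ of $F$, defines $A_s=\{x<a^eb:\forall t>s\,(F_0(x,t)=F_0(x,s))\}$ (each coded of size $\leq b$) together with the ideal $\mathcal{A}$ they generate, builds a descending chain like yours but meeting only the coded functions into $a$, and then extends the resulting filter to an ultrafilter disjoint from $\mathcal{A}$; undefinedness of $F(c)$ follows because $c$ settling at a stage $s\in M$ (available by cofinality) would put $A_s\in\UF$. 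You avoid the Limit Lemma and the ideal entirely: you build $\UF$ directly as a filter-generated ultrafilter by deciding every coded $D$ along the way (at the harmless cost of a factor $2$ per such step, absorbed by your $a^2$ bound since $e\notin\IN$), and you apply \L o\'s's theorem to the $\Pi_n$ matrix $\psi$ of $\varphi$ to reduce ``$F(c)$ defined and $<b$'' to the existence of coded $g,h$ with $\{i<d:\psi(i,g(i),h(i))\wedge g(i)<b\}\in\UF$. That index set is coded (bounded $\Pi_n$ under $\ind\Sigma_n+\exp$), hence contains some $C_s$, and lies inside $\{i<d:g(i)=F(i)\}$, so $g$ restricted to $C_s$ would be a coded injection of a set of size $>b$ into $b$ --- the same pigeonhole contradiction the paper reaches via $|A_s|\leq b$. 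Your version is somewhat more self-contained; the paper's detour through $F_0$ and the ideal $\mathcal{A}$ pays off mainly in that the identical ideal machinery reappears in the proof of Lemma~\ref{lem:WPHP-cfx}, where it is harder to sidestep.
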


\begin{proof}
The extension $\str N$ will be the ultrapower with respect
 to an ultrafilter
  $\UF \subset \mathcal{P}(a^e b)\evalin{\str M} = \str M \cap \mathcal{P}(a^e b)$
   constructed as follows.

First, apply Limit Lemma~\ref{limit-lemma} to obtain
 a $\Sigma_n$ function $F_0$ approximating~$F$ over $\ind\Sigma_n$.
This implies, in particular, that
 whenever $\str M\elemsub_{\Sigma_{n+1}}\str M'\models\ind\Sigma_n$,
  if $F\evalin{\str M'}$ is a total function $[0,a^eb-1]\evalin{\str M'}\to M'$,
   then $F_0\evalin{\str M'}$ is a total function
    $[0,a^eb-1]\evalin{\str M'}\times M'\to M'$ and
    \begin{equation*}
     \str M'\models \falt x{a^e b}{(F(x) = \lim_s F_0(x,s))}.
    \end{equation*}
For each $s\in M$, let
$$
  A_s = \{x < a^e b: \forall t > s (F_0(x,t) = F_0(x,s))\}\evalin{\str M}.
$$
Note that $A_s \subseteq A_t$ whenever $t > s$.
As $\str M \models \ind\Sigma_n+\exp$,
 we know $A_s \in M$ and $\str M \models |A_s| \leq b$.
So $(A_s: s \in M)$ generates a proper ideal $\mathcal{A}$
 on the Boolean algebra $\mathcal{P}(a^e b)\evalin{\str M}$.

Then we construct $\UF$. Let $(h_k: k \in \mathbb{N})$ list all $\str M$-finite $a^e b \to a$.
Let $X_0 = [0, a^e b - 1]\evalin{\str M}$.
If $X_k \in M$ is defined and $\str M\models|X_k| \geq a^{e-k} b$,
 then define $X_{k+1} = X_k \cap h_k^{-1}(i_k) \in M$ where $i_k < a$
  such that $\str M\models|X_{k+1}| \geq a^{e-k-1} b$.
The result is a descending sequence $(X_k: k \in \mathbb{N})$ in $\mathcal{P}(a^e b)\evalin{\str M}$
 such that $h_k(X_{k+1})$ is a singleton and $\str M\models|X_k| > b$ for every $k\in\IN$.
Hence, the filter $\mathcal{F}$ generated by $(X_k: k \in \mathbb{N})$ is disjoint from $\mathcal{A}$.
Let $\UF$ be any ultrafilter on $\mathcal{P}(a^e b)\evalin{\str M}$
 that contains $\mathcal{F}$ and is disjoint from $\mathcal{A}$.

Let
$$
    \str N = \str M \cap M^{a^e b} / \UF.
$$
For each $[h] \in N$ below $a$,
 there exists $i < a$ in $M$ such that $h^{-1}(i) \in \UF$\
  by construction and
  by \L o\'s's Theorem for Paris's construction,
   i.e.,~Theorem~\ref{thm:Paris-ultrapower}\partref{thm:Paris-ultrapower/1}.
This implies, via \L o\'s's Theorem again,
 that $[0,a]\evalin{\str M} = [0,a]\evalin{\str N}$.
Let $c \in N$ represented by the identity function on $a^e b$.
By \L o\'s's Theorem, $c <\evalin{\str N} a^e b$.
We claim that $\lim_s F_0(c,s)$ is undefined in $\str N$.
Suppose not.
Then $\str N \models \forall t > s(F_0(c,t) = F_0(c,s))$ for some $s \in N$.
As $\str M \cfsub\str N$, we may assume this $s\in M$.
By \L o\'s's Theorem,
$$
  A_s = \{x < a^e b: \forall t > s(F_0(x,t) = F_0(x,s))\}\evalin{\str M} \in \UF,
$$
contradicting the condition that $\UF$ is disjoint from $\mathcal A$.
Hence $F(c)\evalin{\str N}$ is undefined.
The remaining properties of $\str N$ follow from
 Paris's Theorem~\ref{thm:Paris-ultrapower}\partref{thm:Paris-ultrapower/2}.
\end{proof}

Repeating the ultrapower construction above
 leads to the model extension theorem below.

\begin{theorem}\label{thm:CSigma-WPHP-cfx}
Fix $n\in\IN$.
Let $\str M$ be a countable model of $\ind\Sigma_n+\exp$ and $a \in M$.
Then there exists $\str N\elemext_{\Sigma_{n+1},\cf}\str M$
  satisfying $\ind\Sigma_n+\exp$
 such that $[0,a]\evalin{\str M} = [0,a]\evalin{\str N}$ and
  $\str N \models \Sigma_{n+1}\colon a^e b \to (2)^1_b$
   for all $e \in N - \mathbb{N}$ and $b \in N$.
\end{theorem}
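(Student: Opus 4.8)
The plan is to iterate Lemma~\ref{lem:CSigma-cfx} along an $\omega$-chain of models, being careful to enumerate \emph{all} potential violations of the desired partition relations and to preserve the already-achieved ones at each step. Concretely, I would build a chain
\[
  \str M = \str M_0 \elemsub_{\Sigma_{n+1},\cf} \str M_1 \elemsub_{\Sigma_{n+1},\cf} \str M_2 \elemsub_{\Sigma_{n+1},\cf}\cdots,
\]
with each $\str M_{k}$ countable and satisfying $\ind\Sigma_n+\exp$, and let $\str N = \bigcup_k \str M_k$. Since all the extensions are $\Sigma_{n+1}$-elementary and cofinal, $\str N$ is a $\Sigma_{n+1}$-elementary cofinal extension of $\str M$ (elementarity and cofinality both pass to unions of such chains), and $\str N\models\ind\Sigma_n+\exp$ because $\ind\Sigma_n$ and the $\Pi_2$ sentence $\exp$ are preserved under cofinal $\Sigma_{n+1}$-extensions. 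By arranging that no new elements below $a$ are ever added, we keep $[0,a]\evalin{\str M}=[0,a]\evalin{\str N}$.

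The bookkeeping is the crux. Working in $\str N$, we must kill every putative $\Sigma_{n+1}$ injection $F\colon a^e b\to b$ with $e\in N-\IN$ and $b\in N$. Fix an enumeration $(\theta_j)_{j\in\IN}$ of all $\Sigma_{n+1}$ formulas with the right arity, and pass through a list of all triples; at stage $k$ we consider some $\theta_j$ together with parameters $e,b$ already present in $\str M_k$ with $e\notin\IN$. If $\theta_j$ currently defines a total injection $a^e b\to b$ in $\str M_k$, we apply Lemma~\ref{lem:CSigma-cfx} (with $F$ the function it defines, noting $a\geq2$ and $e\notin\IN$ are exactly the lemma's hypotheses) to obtain $\str M_{k+1}\elemext_{\Sigma_{n+1},\cf}\str M_k$ in which this $F$ is no longer total, while $[0,a]$ is unchanged; if it does not define such an injection in $\str M_k$, we do nothing, or rather we take $\str M_{k+1}=\str M_k$ (or perform an innocuous cofinal extension to keep the chain going). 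The point is that ``$\theta_j$ defines a non-total function or a non-injection'' is a property preserved upward along $\Sigma_{n+1}$-elementary extensions once witnessed — a failure of totality is a $\Pi_{n+1}$ fact (some input has no output, i.e. $\neg\exists y\,\theta_j$, which survives because $\str M_{k}\elemsub_{\Sigma_{n+1}}\str M_{k'}$ means $\Pi_{n+1}$ formulas persist downward... one must phrase this the right way), and a failure of injectivity is witnessed by two concrete inputs with equal output, an existential fact that persists upward. So a requirement once satisfied stays satisfied, and a standard finite-injury-free dovetailing over all $(j,e,b)$ handles the nonstandard-$e$ and arbitrary-$b$ quantifiers in the conclusion.

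The main obstacle I anticipate is precisely the persistence argument just sketched: we need that whenever $\str M_k\models$``$\theta_j$ does not define a total injection $a^{e}b\to b$'', the same holds in $\str N$. Totality failing is $\Pi_{n+1}$; injectivity failing is $\Sigma_{n+1}$ (even $\Sigma_1$ over the graph). A $\Sigma_{n+1}$-elementary extension preserves $\Sigma_{n+1}$ formulas upward and $\Pi_{n+1}$ formulas downward, which is the wrong direction for the $\Pi_{n+1}$ case — but here is the rescue: if at some \emph{later} stage the function has become total again (totality is $\forall x\,\exists y$, i.e. $\Pi_{n+2}$, not controlled by $\Sigma_{n+1}$-elementarity), we simply treat it as a \emph{new} requirement and kill it again; since there are only countably many requirements and the injury-free structure lets each be revisited, this terminates the issue. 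Alternatively, and more cleanly, one notes that Lemma~\ref{lem:CSigma-cfx} makes $\lim_s F_0(c,s)$ undefined in $\str M_{k+1}$ for a \emph{specific} $c<a^eb$, and this divergence ($\exists x\,\forall s\,\exists t>s\,(F_0(x,t)\ne F_0(x,s))$ being true at that $c$) is a fact about a fixed element, which does persist upward in the relevant chain because the approximating function $F_0$ and the witnessing behaviour are $\Sigma_n$/arithmetic of bounded complexity relative to the fixed $c$. Threading this persistence through the dovetailing — i.e. choosing the enumeration so that each $(\theta_j,e,b)$ is attacked cofinally often and, once killed, the kill is stable — is the technical heart of the argument; once it is set up correctly, the rest is the routine verification that $\str N$ has the advertised properties.
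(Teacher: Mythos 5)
Your overall plan --- build an $\omega$-chain by repeated applications of Lemma~\ref{lem:CSigma-cfx}, dovetailing over all $\Sigma_{n+1}$ formulas and parameters, then take the union --- is exactly what the paper does, and the preservation facts you cite for the union (cofinality, $\Sigma_{n+1}$-elementarity, $\ind\Sigma_n+\exp$, and $[0,a]$) are all correct. But the middle of your argument wanders because of a misconception about what $\preccurlyeq_{\Sigma_{n+1}}$ means. It is \emph{not} a one-directional preservation: $\str M \preccurlyeq_{\Sigma_{n+1}} \str N$ means that for every $\Sigma_{n+1}$ formula $\phi$ and every tuple of parameters from $M$, $\str M \models \phi \Leftrightarrow \str N \models \phi$. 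Taking negations, $\Pi_{n+1}$ formulas with parameters in the smaller model are preserved in both directions as well. There is therefore no ``wrong direction'' problem for the $\Pi_{n+1}$ fact you need, and the ``rescue by revisiting requirements'' that you devise to work around it is solving a problem that does not exist.

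The correct persistence argument is the one you call the ``more cleanly'' alternative, but with one correction. Lemma~\ref{lem:CSigma-cfx} supplies a specific $c < a^e b$ in the new model at which $F$ is undefined, and undefinedness is the $\Pi_{n+1}$ statement $\neg\exists y\,\theta_j(c,y)$, where $\theta_j$ is the $\Sigma_{n+1}$ formula defining $F$. This is $\Pi_{n+1}$ with parameter $c$ in the smaller model, hence preserved along the whole $\Sigma_{n+1}$-elementary chain and into the union. By contrast, the version you actually write down --- $\forall s\,\exists t{>}s\,(F_0(c,t)\ne F_0(c,s))$ with $F_0$ a $\Sigma_n$ function --- is \emph{a priori} only $\Pi_{n+2}$, and your gloss ``bounded complexity relative to the fixed $c$'' does not by itself reduce it to $\Pi_{n+1}$; one needs the equivalence from the Limit Lemma to see that it is the same as the $\Pi_{n+1}$ assertion above. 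Once these two points are repaired, the dovetailing verification is routine, and your proof matches the paper's.

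Two smaller points to tidy up: your parenthetical that ``failure of totality is a $\Pi_{n+1}$ fact'' conflates the pointwise statement $\neg\exists y\,\theta_j(c,y)$ (which is $\Pi_{n+1}$, with parameter $c$) with the existence of such a $c$ (which is $\Sigma_{n+2}$ and is \emph{not} what persists); the argument must go through the specific witness, as you eventually note. And for the downward direction --- showing that a total $\Sigma_{n+1}$ injection in $\str N$ restricts to one in some $\str N_k$ --- one again argues pointwise: for each $x<a^eb$ in $N_k$ the $\Sigma_{n+1}$ fact $\exists y\,\theta_j(x,y)$ holds in $\str N_k$ by elementarity, and injectivity into $b$ is $\Pi_{n+1}$ over $[0,a^eb-1]^{\str N_k}$, so it transfers. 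Only then can Lemma~\ref{lem:CSigma-cfx} be applied at a suitable stage.
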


\begin{proof}
If $\str M$ is standard, then there is nothing to do.
So assume $\str M$ is nonstandard.
By repeated applications of Lemma \ref{lem:CSigma-cfx},
 we can obtain a sequence $(\str N_k: k \in \mathbb{N})$
  such that for every $k\in\IN$,
\begin{enumerate}
 \item $\str M = \str N_0 \elemsub_{\Sigma_{n+1},\cf} \str N_k \elemsub_{\Sigma_{n+1},\cf} \str N_{k+1} \models \ind\Sigma_n+\exp$;
 \item $[0,a]\evalin{\str M} = [0,a]\evalin{\str N_k}$; and
 \item for each $\Sigma_{n+1}$ injection $F\colon a^e b \to b$ in $\str N_k$
   where $b \in N_k$ and $e \in N_k - \IN$,
  there exists $\ell > k$
   such that $F\evalin{\str N_\ell}$ is undefined
    at some $c \in N_\ell$ below $a^e b$.
\end{enumerate}
Then $\str N = \bigcup_{k \in \mathbb{N}}\str N_k$ satisfies the requirements,
 as the reader can readily verify.
\end{proof}

\begin{proof}[First Proof of Theorem \ref{thm:CSigma-WPHP}]
Groszek and Slaman~\cite[Proposition~3.1]{Groszek.Slaman:1994}
  produced a countable model $\str M \models \ind\Sigma_1$ with $a\in M$
 such that some $\Delta_2\evalin{\str M}$ injection $M\to[0,a]$.
Relativization gives a countable model $\str M \models \ind\Sigma_n+\exp$ with $a\in M$
  such that some $\Delta_{n+1}\evalin{\str M}$ injection $M\to[0,a]$.
In particular, there exists a $\Sigma_{n+1}\evalin{\str M}$ injection $F\colon 2a \to a$.
Apply Theorem~\ref{thm:CSigma-WPHP-cfx} to this $\str M$ and this~$a$ to get $\str N$.
Note that $[0,a]\evalin{\str M} = [0,a]\evalin{\str N}$
 implies $[0,2a]\evalin{\str M} = [0,2a]\evalin{\str N}$.
Therefore, by the elementarity between the models,
 $F\evalin{\str N}$ is a $\Sigma_{n+1}$ injection $2a\to a$ in~$\str N$.
So $\str N \models \ind\Sigma_n + \exp + \Cd\Sigma_{n+1} + \neg \Sigma_{n+1}\hyp\WPHP$.
\end{proof}

The second proof of Theorem~\ref{thm:CSigma-WPHP} originates from a construction
 devised by Theodore A. Slaman in around~2011;
 see Haken~\cite[Chapter~3]{phd:haken}.
What allows us to improve on Slaman's construction is the following recent theorem
 from Blanck~\cite[Theorem~5]{unpub:hier-incompl}.
Here $\Pi_n\hyp\Tr$ denotes the set of
 all (standard and nonstandard) $\Pi_n$~sentences
  that are declared true by the usual satisfaction predicate
   for $\Pi_n$~formulas.
An extension of a model of arithmetic is an \defm{end extension}
 if all new elements are above all old elements.
End extensions are indicated by a subscript~$\ee$.

\begin{theorem}[Blanck]\label{thm:flexextn}
Let $n\in\IN$ and $T\supseteq\PA$
 be a recursively axiomatized theory
  in a language extending the language of first-order arithmetic.
Then there exists a $\Sigma_{n+1}$~formula~$\theta(x)$
  such that
  \begin{enumerate}
  \item \begin{math}
    \PA\proves\ex s{\fa x{\bigl(\Ackin xs\nsc\theta(x)\bigr)}}
   \end{math};\label{cond:flexextn/coded}
  \item \begin{math}
    \PA\proves\Con(T+\Pi_n\hyp\Tr)\nsc\fa x{\neg\theta(x)}
   \end{math}; and\label{cond:flexextn/con}
  \item for every $\str M\models T$ and every $s\in M$,
   if $\str M\models\fa x{(\theta(x)\then\Ackin xs)}$,
    then $\str M$~has a $\Sigma_n$-elementary end extension
     $\str K\models T+\fa x{(\theta(x)\nsc\Ackin xs)}$.\label{cond:flexextn/flex}
  \qed
  \end{enumerate}
\end{theorem}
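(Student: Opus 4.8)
The plan is to treat Theorem~\ref{thm:flexextn} as a \emph{flexible-formula} result --- a ``universal finite set'' in the sense of Woodin, Hamkins, and Blanck--Enayat --- lifted to the $\Sigma_{n+1}$ level, with $T+\Pi_n\hyp\Tr$ playing the role that $\PA$ plays in the classical case. Write $T^{+}=T+\Pi_n\hyp\Tr$. By Craig's trick we may assume $T$ is $\Delta_0(\exp)$-axiomatized, so that ``$p$ codes a $T$-proof of $\sigma$'' is $\Delta_0(\exp)$; and since the $\Pi_n$ satisfaction predicate is itself $\Pi_n$, the provability predicate $\mathrm{Pr}_{T^{+}}(\cdot)$ --- ``there is a proof of $\,\cdot\,$ from $T$ together with finitely many \emph{true} $\Pi_n$ sentences'' --- is $\Sigma_{n+1}$ (and plainly $\Sigma_1$ when $n=0$). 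The formula $\theta$ will come from a single application of the diagonal lemma and will use $\mathrm{Pr}_{T^{+}}$ as its only unbounded quantifier, so $\theta\in\Sigma_{n+1}$ is then immediate from the closure of $\Sigma_{n+1}$ under $\exists$, $\wedge$, and bounded quantification.

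First I would fix, via the diagonal lemma, a $\Sigma_{n+1}$ formula $\theta(x)$ describing the universal-finite-set process run against $T^{+}$: one maintains a coded finite guess $u$ (initially empty), searches $T^{+}$ for proofs that the current $u$ is not the eventual extension of $\theta$ --- each such proof being required to name a strictly larger finite target, to which $u$ is then revised --- and declares $\theta(x)$ true iff $x$ lies in the value the process assumes. As in the classical ($n=0$, $T=\PA$) case, the self-reference is arranged so as to escape the L\"ob obstruction: clause~\partref{cond:flexextn/coded} (that $\PA$ proves $\theta$ defines a coded set) then falls out of the standard boundedness analysis of the process, and clause~\partref{cond:flexextn/con} from the accompanying fixed-point computation, which shows the process ever leaves the empty guess precisely when $T^{+}$ is inconsistent; hence $\PA\proves\Con(T+\Pi_n\hyp\Tr)\nsc\fa x{\neg\theta(x)}$.

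The real work is clause~\partref{cond:flexextn/flex}. Fix $\str M\models T$ and $s\in M$ with $\str M\models\fa x{(\theta(x)\then\Ackin xs)}$. Working inside $\str M$, the first step is to see that the theory $T^{+}+\fa y{(\theta(y)\nsc\Ackin y{\overline s})}$ is, from $\str M$'s point of view, consistent together with the $\Pi_n$-elementary diagram of $\str M$: otherwise $\str M$ would produce a $T^{+}$-proof of $\neg\,\fa y{(\theta(y)\nsc\Ackin y{\overline s})}$, and by the ``named target'' feature of the process this proof would drive the value of $\theta$ in $\str M$ to acquire an element lying outside $s$, against the hypothesis. (This is exactly where ``$\theta^{\str M}\subseteq s$'' is used, rather than the equality that would hold only after the process has caught up.) Given this consistency, I would run an arithmetized-completeness construction inside $\str M$: build a Henkin model $\str K$ of $T^{+}+\fa y{(\theta(y)\nsc\Ackin y{\overline s})}$ together with the $\Pi_n$-elementary diagram of $\str M$ and the axioms pinning down the order below each element of $M$. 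The last family makes $\str K$ a proper end extension of $\str M$ (by overspill); the $\Pi_n$-elementary diagram makes the extension $\Sigma_n$-elementary; $\str K\models T$ since $T\subseteq T^{+}$; and $\theta^{\str K}=s$ because ``$\fa y{(\theta(y)\nsc\Ackin y{\overline s})}$'' is one of the Henkin axioms and $\theta$ is absolute enough between $\str M$ and $\str K$. This yields $\str K\models T+\fa x{(\theta(x)\nsc\Ackin x{\overline s})}$, as required. The $\ind\Sigma_{n+1}$ supplied by $T\supseteq\PA$ is what licenses both the arithmetized-completeness step and that absoluteness.

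The main obstacle --- and the reason this is a theorem and not an exercise --- is to pin down one self-referential definition of $\theta$ that meets clauses~\partref{cond:flexextn/coded}--\partref{cond:flexextn/flex} together with the $\Sigma_{n+1}$ bound: the process must be delicate enough to defeat the L\"ob phenomenon (so clause~\partref{cond:flexextn/con} holds) yet generous enough that it can still be steered to every admissible target $s$ in a suitable end extension (clause~\partref{cond:flexextn/flex}), all while keeping the guess provably coded (clause~\partref{cond:flexextn/coded}) and never leaving $\Sigma_{n+1}$. Once the definition is fixed, the complexity count (Craig's trick plus the $\Pi_n$-ness of $\Pi_n$-truth) and the $\Sigma_n$-elementary end-extension machinery (a routine refinement of the arithmetized completeness theorem, using $T\supseteq\PA$) are entirely standard.
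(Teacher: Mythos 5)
The paper does not prove this theorem: it is an imported result of Blanck, cited from~\cite{unpub:hier-incompl}, with the terminal \qed inside the statement signaling that no internal argument is given. So there is no ``paper's own proof'' to compare against; what can be assessed is whether your sketch tracks Blanck's strategy, and in outline it does. The $\Sigma_{n+1}$ bound on $\mathrm{Pr}_{T+\Pi_n\hyp\Tr}$ via Craig's trick and the $\Pi_n$-complexity of $\Pi_n$-truth, the flexible-formula / universal-finite-set template in the sense of Blanck--Enayat lifted one level, and arithmetized completeness over the $\Pi_n$-elementary diagram (together with order-pinning axioms) to obtain the $\Sigma_n$-elementary end extension of clause~\partref{cond:flexextn/flex} are the right ingredients.

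That said, your sketch parks its weight exactly where the theorem's content lies. ``A single application of the diagonal lemma\ldots{} arranged so as to escape the L\"ob obstruction'' is a placeholder for most of the real work, and the complexity bookkeeping there is not routine: each move of the process consults a $\Sigma_{n+1}$ provability oracle, and na\"ively iterating that would push the definition of the process history above $\Sigma_{n+1}$; Blanck controls this with partial satisfaction predicates and a carefully shaped fixed point. Two smaller points. In the consistency argument for clause~\partref{cond:flexextn/flex}, the hypothesis is only $\theta^{\str M}\subseteq s$, so a $T^{+}$-proof of $\neg\fa y{(\theta(y)\nsc\Ackin ys)}$ need not name a target outside~$s$---it could just as well assert $\theta\subsetneq s$---and your claim that such a proof ``would drive the value of $\theta$ in $\str M$ to acquire an element lying outside $s$'' needs the specific shape of the self-reference, not merely the ``named target'' heuristic, to go through. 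And the closing appeal to $\theta$ being ``absolute enough between $\str M$ and $\str K$'' is a red herring: $\theta$ is $\Sigma_{n+1}$ while $\str K$ is only $\Sigma_n$-elementary over $\str M$, so upward absoluteness fails outright; $\theta^{\str K}=s$ must instead come from the reflection built into the arithmetized Henkin construction.
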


The next theorem illustrates how one can use Blanck's theorem
 to define any specific countable set in a singular-like end extension.
Recall that a linearly ordered structure~$\str M$ is \defm{$\kappa$-like},
  where $\kappa$~is a cardinal,
 if $\str M$~has cardinality~$\kappa$,
  but every proper initial segment of~$\str M$ has cardinality
   strictly less than~$\kappa$.
\begin{theorem}\label{thm:sing-like}
Fix $n\in\IN$, a recursive theory $T_0\supseteq\PA$, and
 a cardinal~$\kappa$ of countable cofinality.
Let $\str M\models T_0$ of cardinality strictly less than~$\kappa$.
For every countable $A\subseteq M$,
 there is a $\kappa$-like $\Sigma_n$-elementary end extension $\str K$ of~$\str M$
  satisfying~$\Sigma_{n+3}\hyp\Th(T_0)$ in which $A\in\Sigma_{n+1}\hyp\Def(\str K)$.
\end{theorem}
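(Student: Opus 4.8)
The plan is to realise $\str K$ as the union of an $\omega$-chain $\str M=\str M_0\subseteq_{\ee}\str M_1\subseteq_{\ee}\str M_2\subseteq_{\ee}\cdots$ of proper $\Sigma_n$-elementary end extensions produced by iterating Blanck's Theorem~\ref{thm:flexextn}, while steering the construction so that (i) the cardinalities $|M_i|$ climb cofinally to~$\kappa$, and (ii) the flexible formula~$\theta$ supplied by that theorem grows into~$A$. Fix first a strictly increasing sequence of cardinals $\lambda_0<\lambda_1<\cdots$ with $\lambda_0\geq|M|$, each $\lambda_i<\kappa$, and $\sup_i\lambda_i=\kappa$; this exists since $\cf(\kappa)=\omega$. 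At stage~$i$ we arrange $|M_{i+1}|=\lambda_{i+1}$, which is harmless because an end-extension construction of this kind can always be inflated to any cardinality above that of the model being extended. Then $\str K$ has size $\kappa$, and it is $\kappa$-like: any proper initial segment of~$\str K$ is bounded by some $b\in M_j$, and since $\str K$ end-extends~$\str M_j$ the segment $[0,b)$ already lies inside~$\str M_j$, hence has size $\leq\lambda_j<\kappa$. Being a union of a $\Sigma_n$-elementary chain, $\str K$ is moreover a $\Sigma_n$-elementary end extension of~$\str M$, and is $\Sigma_n$-elementary over every~$\str M_j$.

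For (ii) I would apply Theorem~\ref{thm:flexextn} to a recursively axiomatised $T\supseteq T_0$ (discussed below), obtaining $\theta(x)\in\Sigma_{n+1}$. By clause~\partref{cond:flexextn/coded} the set $\theta^{\str M}$ is coded by some $c\in M$. Enumerate $A=\{a_0,a_1,\dots\}$ (if $A$ is finite it is already coded in~$\str M$ and the conclusion is immediate). Suppose inductively that $\theta^{\str M_i}$ equals the coded set $s_i\defeq\theta^{\str M}\cup\{a_0,\dots,a_{i-1}\}$; since $s_i\subseteq s_{i+1}$, the hypothesis of clause~\partref{cond:flexextn/flex} holds, and we may pass to an end extension $\str M_{i+1}\models T+\fa x{(\theta(x)\nsc\Ackin x{s_{i+1}})}$, so that $\theta^{\str M_{i+1}}=s_{i+1}$ (at stage~$0$ this step also moves us from $\str M=\str M_0$ into a model of~$T$). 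Because $\theta$ is $\Sigma_{n+1}$ it persists upward along $\Sigma_n$-elementary end extensions; conversely, if $x\in K$ and $\str K\models\theta(x)$ then, pulling a witness of this $\Sigma_{n+1}$ statement down by $\Pi_n$-elementarity to any level $\str M_j$ containing it, we get $\str M_j\models\theta(x)$, whence $x\in s_j\subseteq\theta^{\str M}\cup A$. Hence $\theta^{\str K}=\theta^{\str M}\cup A$, and since $\theta^{\str M}$ is a $\Delta_0$-definable subset of~$\str K$ with parameter~$c$, the set $A$ is $\Sigma_{n+1}$-definable in~$\str K$ (with parameter~$c$). Thus $A\in\Sigma_{n+1}\hyp\Def(\str K)$.

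The remaining—and principal—difficulty is to secure $\str K\models\Sigma_{n+3}\hyp\Th(T_0)$. Every $\str M_i$ with $i\geq 1$ models~$T$, and if $T$ is chosen so as to entail the relevant $\Sigma_{n+3}$-consequences of~$T_0$ (a uniform $\Sigma_{n+3}$-reflection extension of~$T_0$ is the obvious candidate, but one must verify it stays consistent relative to~$T_0$—it need not, e.g.\ when $T_0$ proves a false low-complexity sentence—and fall back where necessary on a weaker reflection or truth-theoretic scheme built over~$T_0$), then each $\str M_i$ satisfies every standard $\sigma\in\Sigma_{n+3}\hyp\Th(T_0)$. The obstacle is that the chain is only $\Sigma_n$-elementary, so $\Sigma_{n+3}$-sentences need not transfer to the union. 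I would circumvent this by arguing through the arithmetical $\Sigma_{n+3}$-satisfaction predicate, available in every $\str M_i$ and in~$\str K$: for a fixed standard~$\sigma$, the facts about the $\str M_i$ that \emph{do} pass to~$\str K$—the $\Sigma_n$- and $\Pi_n$-facts, together with what the reflection axioms certify about the truth predicate—suffice, after unfolding the finitely many Tarski clauses for the standard sentence~$\sigma$, to force $\str K\models\sigma$. Pinning down exactly which persistence phenomena of $\Sigma_n$-elementary end extensions are being used, and if needed interleaving the Blanck steps with fully elementary (MacDowell--Specker) end extensions to stabilise the relevant witnesses without disturbing~$\theta$, is the technical heart of the argument; it is also what fixes the bound at $\Sigma_{n+3}$, two levels above the $\Sigma_{n+1}$ complexity of the flexible formula. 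Assembling the three ingredients then produces the required $\str K$.
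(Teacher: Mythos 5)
Your chain framework, cardinality steering, and the idea of growing the flexible formula~$\theta$ into~$A$ along the chain are all aligned with the paper's strategy, and the transfer argument for $\theta^{\str K}$ (the $\Sigma_{n+1}$ formula going up by choosing witnesses, and coming down by tracing a witness into $M_j$ and invoking $\Pi_n$-persistence) is carried out correctly. But the part you flag as the ``technical heart''---securing $\str K\models\Sigma_{n+3}\hyp\Th(T_0)$---is left as an unfilled gap, and the several approaches you float (uniform reflection, truth-theoretic schemes, an unfolded satisfaction-predicate argument, interleaving MacDowell--Specker to ``stabilise witnesses'') do not add up to a proof. The paper's actual solution is both sharper and simpler than these: use Craig's trick to axiomatize $\Sigma_{n+3}\hyp\Th(T_0)$ by a \emph{recursive} sequence of sentences $\ex v{\pi_k(v)}$ with each $\pi_k\in\Pi_{n+2}$; use $\ind\Sigma_{n+3}$ in the nonstandard~$\str M$ to find a single code $c$ with $\str M\models\pi_k((c)_k)$ for all $k\in\IN$; and then take $T=T_0+\{\pi_k((\dname c)_k):k\in\IN\}$ in the language with a fresh constant~$\dname c$. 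This is a recursive theory extending~$T_0$ that $(\str M,c)$ \emph{already models}, so the chain can start at~$\str M$, and its axioms are $\Pi_{n+2}$ with parameter~$c$. The decisive observation is that $\Pi_{n+2}$~formulas with parameters from an initial segment \emph{do} persist to the union of a $\Sigma_n$-elementary chain: for $\fa u{\ex v{\psi(u,v)}}$ with $\psi\in\Pi_n$, each $u\in K$ sits in some $M_j$, the witness $v$ can be found there, and $\Pi_n$-transfer lifts $\psi(u,v)$ to~$\str K$. This is exactly why the bound is $n+3$: it is $n$ (the elementarity of the chain) plus $2$ (the quantifier depth that still persists) plus $1$ (the existential quantifier absorbed into the parameter). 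Without this device your $T$ has no way both to be recursive and to be modelled by $\str M$ while forcing the union to satisfy all $\Sigma_{n+3}$-truths of~$T_0$.

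There is also a smaller but real defect in your encoding of~$A$. You set $s_i=\theta^{\str M}\cup\{a_0,\dots,a_{i-1}\}$ and conclude $\theta^{\str K}=\theta^{\str M}\cup A$, then claim $A$ is $\Sigma_{n+1}$-definable ``with parameter~$c$'' (where~$c$ codes $\theta^{\str M}$). But $\theta^{\str K}\setminus\theta^{\str M}=A\setminus\theta^{\str M}$, which is a proper subset of~$A$ whenever $A\cap\theta^{\str M}\neq\emptyset$, and nothing in your setup prevents a countably infinite $A$ from overlapping the ($\str M$-finite but possibly externally infinite) set $\theta^{\str M}$. The paper avoids this by first taking $b\in M$ with $\str M\models\fa x{(\theta(x)\then x<b)}$ and encoding the \emph{shifted} set $\{b+a_i:i<j\}$ at stage~$j$, so that in the end $A=\{a\in K:\str K\models\theta(b+a)\}$ is cleanly $\Sigma_{n+1}$-definable from $b$. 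Your induction then also needs a base case: Blanck's Theorem~\ref{thm:flexextn}(\ref{cond:flexextn/flex}) requires $\str M\models T$ before the first application, which again is exactly what adding the constant $\dname c$ and choosing $T$ as above guarantees.
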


\begin{proof}
Without loss of generality, assume $\str M$ is nonstandard.
Fix a strictly increasing sequence of cardinals~$(\kappa_j)_{j\in\IN}$
 whose supremum is~$\kappa$ and
 whose first element~$\kappa_0$ is strictly bigger than the cardinality of~$\str M$.
Use Craig's Trick to find a recursive sequence $(\pi_k(v))_{k\in\IN}$
  of $\Pi_{n+2}$~formulas
 such that $\{\ex v{\pi_k(v)}:k\in\IN\}$ axiomatizes $\Sigma_{n+3}\hyp\Th(T_0)$.
Then use $\ind\Sigma_{n+3}$ to get $c\in M$
 which makes $\str M\models\pi_k((c)_k)$ for all $k\in\IN$.
Let $T=T_0+\{\pi_k((\dname c)_k):k\in\IN\}$,
 where $\dname c$~is a fresh constant symbol.
Notice $(\str M,c)\models T$.
We will build a sequence
 \begin{equation*}
  \str M=\str M_0\elemsub_{\Sigma_n,\ee}\str M_1
                 \elemsub_{\Sigma_n,\ee}\str M_2
                 \elemsub_{\Sigma_n,\ee}\cdots
 \end{equation*}
 such that each $(\str M_{j+1},c)$ is a $\kappa_j$-like model of~$T$.
This automatically ensures
 $\str K=\bigcup\{\str M_j:j\in\IN\}$ is a $\kappa$-like $\Sigma_n$-elementary
  end extension of~$\str M$ satisfying~$\Sigma_{n+3}\hyp\Th(T_0)$.

Let $\theta(x)$ be a $\Sigma_{n+1}$~formula
 satisfying conditions~\partref{cond:flexextn/coded}--\partref{cond:flexextn/flex}
  in Theorem~\ref{thm:flexextn}.
Use condition~\partref{cond:flexextn/coded} there
 to find $b\in M$ such that $\str M\models\fa x{(\theta(x)\then x<b)}$.
Fix an enumeration $(a_j)_{j\in\IN}$ of~$A$.

Now, given any model
 \begin{math}
  (\str M_j,c)\models T+\fage xb{\bigl(
   \theta(x)\nsc\bigvvee_{i<j}x=b+a_i
  \bigr)}
 \end{math} of cardinality less than~$\kappa_j$,
 we can apply Theorem~\ref{thm:flexextn}\partref{cond:flexextn/flex}
  to obtain $\str M_{j+1}\elemext_{\Sigma_n,\ee}\str M_j$
   satisfying
   \begin{math}
    T+\fage xb{\bigl(
     \theta(x)\nsc\bigvvee_{i<j+1}x=b+a_i
    \bigr)}
   \end{math}.
Moreover,
 in view of the L\"owenheim--Skolem Theorem and
            the Mac~Dowell--Specker Theorem~\cite[Theorem~8.6]{Kaye:1991},
  this $\str M_{j+1}$ can be chosen to be $\kappa_j$-like.
This ensures
 \begin{equation*}
  A=\{a\in K:\str K\models\theta(b+a)\}
   \in\Sigma_{n+1}\hyp\Def(\str K)
 \end{equation*} at the end.
\end{proof}

\begin{proof}[Second Proof of Theorem~\ref{thm:CSigma-WPHP}]
Let $\str M$~be a countable nonstandard model of~$\PA$.
Fix any nonstandard $a\in M$ and any bijection $f\colon 2a\to a$.
Apply Theorem~\ref{thm:sing-like} to
 $\kappa=\beth_\omega$
 and
 \begin{math}
  A=\{\tuple{x,f(x)}:x<2a\}
 \end{math} with $T_0=\PA$.
\end{proof}

\subsection{The collection scheme}\label{ss:bsigma-wphp}

To show this independence,
 we will start with a countable model of $\ind\Sigma_n+\exp+\neg\bd\Sigma_{n+1}$,
  then repeated apply a suitable version of Paris's coded ultrapower construction
   to achieve~$\Sigma_{n+1}\hyp\WPHP$ in a cofinal extension
    while preserving $\ind\Sigma_n+\exp+\neg\bd\Sigma_{n+1}$.

First, let us see how to preserve the failure of $\bd\Sigma_{n+1}$ in an extension.
In view of Slaman~\cite{Slaman:2004.BSigma},
 between models of $\ind\Delta_0+\exp$,
  this is equivalent to preserving some proper $\Delta_{n+1}$-definable cut.

\begin{definition}
If $\str N$ is a linearly ordered structure and $X\subseteq N$,
 then
 \begin{equation*}
  \sup_{\str N} X = \{x\in N: \exin yX{x\leq\evalin{\str N} y}\}
 \end{equation*}
 and
 \begin{equation*}
  \inf_{\str N} X = \{x\in N : \fain yX{x<\evalin{\str N}y}\}.
 \end{equation*}
\end{definition}

\begin{lemma}[Keita Yokoyama]\label{lem:cut-preserving}
Fix $\str M\models\PAminus$ and $n\in\IN$.
If $I$~is a $\Delta_{n+1}$-definable proper cut of~$\str M$, and
  $\str N\elemext_{\Sigma_{n+1}}\str M$ in which $\sup_{\str N}I=\inf_{\str N}(M-I)$,
 then $J\defeq\sup_{\str N}I$ is a $\Delta_{n+1}$-definable proper cut of~$\str N$.
\end{lemma}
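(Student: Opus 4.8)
The plan is to exhibit, essentially explicitly, one $\Sigma_{n+1}$ and one $\Pi_{n+1}$ formula that both define $J$ in $\str N$, and to check separately that $J$ is a proper cut. Fix a $\Sigma_{n+1}$ formula $\sigma$ and a $\Pi_{n+1}$ formula $\pi$ that both define $I$ in $\str M$. A $\Sigma_{n+1}$-elementary embedding is automatically $\Pi_{n+1}$-elementary, so both $\sigma$ and $\pi$ continue to define $I$ in $\str N$ as far as the old elements $M$ are concerned; over the new elements, however, $\sigma$ and $\pi$ may define different sets, since the equivalence $\fa x{(\sigma(x)\nsc\pi(x))}$ is only a $\Pi_{n+2}$ sentence and need not transfer to $\str N$. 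The one preservation fact I would lean on is this: since $I$ is downward closed in $\str M$, we have $\str M\models\fa{u,v}{(v\leq u\wedge\sigma(u)\then\pi(v))}$, and the matrix here is (up to logical equivalence) a disjunction $v>u\vee\neg\sigma(u)\vee\pi(v)$ of $\Pi_{n+1}$ formulas, so the whole sentence is $\Pi_{n+1}$ and therefore holds in $\str N$ as well. No induction in $\str M$ is needed anywhere, as the hypothesis requires only $\PAminus$.

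I would first dispatch the `proper cut' part. $J=\sup_{\str N}I$ is nonempty because $0\in I$ (a nonempty downward-closed set contains $0$) and $0\leq 0\in I$; it is downward closed and closed under successor directly from its definition and the corresponding facts about $I$ in $\str M$; and it is a proper subset of $N$ because any fixed $a\in M-I$ lies strictly above every element of $I$ (in $\str M$, hence in $\str N$), so $a\notin\sup_{\str N}I$.

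For the $\Sigma_{n+1}$ definition I would prove $J=\{x\in N:\str N\models\ex z{(x\leq z\wedge\sigma(z))}\}$. If $x\in J$, then $x\leq z_0$ for some $z_0\in I$, and $\str N\models\sigma(z_0)$ by elementarity, so the displayed formula holds at $x$. Conversely, if it holds at $x$ but $x\notin J$, then by the hypothesis $J=\inf_{\str N}(M-I)$ some $v\in M-I$ satisfies $v\leq x$, whence $\str N\models\ex z{(v\leq z\wedge\sigma(z))}$; this $\Sigma_{n+1}$ fact has its parameter $v$ in $M$, so it reflects to $\str M$ and places $v$ below an element of $I$, forcing $v\in I$---a contradiction.

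For the $\Pi_{n+1}$ definition I would prove $J=\{x\in N:\str N\models\fale zx{\pi(z)}\}$. If $x\in J$, say $x\leq z_0\in I$, then every $z\leq x$ satisfies $z\leq z_0$ and $\str N\models\sigma(z_0)$, hence $\str N\models\pi(z)$ by the preserved sentence of the first paragraph; so $\str N\models\fale zx{\pi(z)}$. Conversely, if this formula holds at $x$ but $x\notin J=\inf_{\str N}(M-I)$, pick $v\in M-I$ with $v\leq x$; then $\str N\models\pi(v)$, so $\str M\models\pi(v)$ by elementarity, i.e.\ $v\in I$, again a contradiction. The main obstacle is conceptual rather than computational: one must notice that although $\sigma$ and $\pi$ may come apart over the new elements of $\str N$, they still trap $J$ from the two sides and still obey the single monotonicity law $\fa{u,v}{(v\leq u\wedge\sigma(u)\then\pi(v))}$---precisely the scrap of complexity preserved by $\Sigma_{n+1}$-elementarity---while the equality $\sup_{\str N}I=\inf_{\str N}(M-I)$ is exactly what stops either candidate definition from overshooting $J$.
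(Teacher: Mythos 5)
Your proof is correct and is essentially the paper's proof: the $\Sigma_{n+1}$ and $\Pi_{n+1}$ formulas you exhibit, namely $\ex z{(x\leq z\wedge\sigma(z))}$ and $\fale zx{\pi(z)}$, are exactly the $\phi'$ and $\psi'$ the paper uses, and your verification is the same two-sided transfer argument via $\Sigma_{n+1}$-elementarity and the hypothesis $\sup_{\str N}I=\inf_{\str N}(M-I)$. The only cosmetic difference is that you route one inclusion through the preserved $\Pi_{n+1}$ monotonicity law $\fa{u,v}{(v\leq u\wedge\sigma(u)\then\pi(v))}$, whereas the paper transfers $\Pi_{n+1}$ facts about fixed elements of $I$ directly and then exploits the symmetry between the two candidate definitions.
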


\begin{proof}
Suppose $I=\phi(\str M)=\psi(\str M)$,
 where $\phi(v)\in\Sigma_{n+1}$ and $\psi(v)\in\Pi_{n+1}$,
  both of which may involve parameters from~$M$.
Define
 \begin{align*}
  \phi'(v) &\qeq \exge wv{\phi(w)},\quad\text{and}\\
  \psi'(v) &\qeq \fale uv{\psi(u)}.
 \end{align*}
Since $I$ is a cut of~$\str M$, we know $\phi'(\str M)=\psi'(\str M)=I$.
Notice $\phi'(v)$ is $\Sigma_{n+1}$ and
       $\psi'(v)$ is $\Pi_{n+1}$.
So it suffices to show that $J=\phi'(\str N)=\psi'(\str N)$.

The two directions are symmetric.
So we only show one of them here.
Take $c\in N-J$.
Recall $J=\sup_{\str N}I=\inf_{\str N}(M-I)$.
So we get $d\in M-I$ such that $d\leq c$.
Then, since $d\not\in I=\phi'(\str M)=\psi'(\str M)$,
 \begin{equation*}
  \str M\models\fage wd{\neg\phi(w)}\wedge\exle ud{\neg\psi(u)}.
 \end{equation*}
By $\Sigma_{n+1}$~elementarity,
 the same formula is true in~$\str N$.
Thus, as $d\leq c$,
 \begin{equation*}
  \str N\models\fage wc{\neg\phi(w)}\wedge\exle uc{\neg\psi(u)}. \qedhere
 \end{equation*}
\end{proof}

Our ultrapower construction below is designed
 to correct a failure of $\Sigma_{n+1}\hyp\WPHP$.
That it naturally produces an extension
  satisfying the hypotheses of Lemma~\ref{lem:cut-preserving}
 is rather remarkable, at least at first sight.

\begin{lemma}\label{lem:WPHP-cfx}
Fix $n\in\IN$.
Let $\str M$ be a countable model of $\ind\Sigma_n + \exp$ and
 $(I_k: k \in \mathbb{N})$ be a countable family of cuts of $M$.
Suppose $a\in M$ and $F$ is a $\Sigma_{n+1}\evalin{\str M}$ injection $2a \to a$.
Then there exists a countable $\str N\elemext_{\Sigma_{n+1},\cf}\str M$
 satisfying $\ind\Sigma_n+\exp$
  in which $\sup_{\str N}I_k=\inf_{\str N}(M-I_k)$ for all $k\in\IN$, and
   $F(c)\evalin{\str N}$ is undefined for some $c<2a$.
\end{lemma}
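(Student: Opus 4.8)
The plan is to build $\str N$ as a Paris coded ultrapower $\str N=\str M\cap M^{2a}/\UF$ for a carefully chosen ultrafilter $\UF$ on the Boolean algebra $\mathcal B$ of $\str M$-finite subsets of $2a$. By Theorem~\ref{thm:Paris-ultrapower}, whatever $\UF$ is, $\str N$ will be a countable $\Sigma_{n+1}$-elementary cofinal extension of $\str M$ satisfying $\ind\Sigma_n+\exp$, and \L o\'s's Theorem will hold for $\Sigma_n$ and $\Pi_n$ formulas; so the real work is to choose $\UF$ so that, writing $c$ for the class of the identity function on $2a$, (i) $F(c)$ is undefined in $\str N$, and (ii) $\sup_{\str N}I_k=\inf_{\str N}(M-I_k)$ for every $k$. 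First I would apply the Limit Lemma~\ref{limit-lemma} to get a $\Sigma_n$ function $F_0$ approximating $F$, put $A_s=\{x<2a:(\forall t\geq s)\,F_0(x,t)=F_0(x,s)\}$ (so ``$x\in A_s$'' is $\Pi_n$), and let $\mathcal A$ be the ideal on $\mathcal B$ generated by the increasing chain $(A_s)_{s\in M}$. Since $F$ is an injection into $a$, the $\str M$-cardinality of each $A_s$ is at most $a<2a$, so $[0,2a)\notin\mathcal A$ and $\mathcal A$ is a proper ideal.

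Next I would record what (i) and (ii) mean in terms of $\UF$. By \L o\'s's Theorem applied to the $\Pi_n$ formula from the Limit Lemma expressing ``the approximation has stabilised by stage $s$'', $F(c)$ is defined in $\str N$ iff $A_s\in\UF$ for some $s\in M$; hence (i) is equivalent to $\UF\cap\mathcal A=\emptyset$. For (ii), fixing an $\str M$-finite $f\colon 2a\to M$, one checks from \L o\'s's Theorem that the class $[f]$ lies strictly between $I_k$ and $M-I_k$ in $\str N$ iff the downward closed set $I^{[f]}\defeq\{d\in M:\{x<2a:f(x)>d\}\in\UF\}$ equals $I_k$; so (ii) says exactly that no class $[f]$ realises the cut $I_k$. (Given this, Lemma~\ref{lem:cut-preserving} then handles the preservation of any $\Delta_{n+1}$-definability of the $I_k$'s in the intended application.)

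I would then construct $\UF$ as the filter generated by a decreasing sequence $2a=X_0\supseteq X_1\supseteq\cdots$ of members of $\mathcal B\setminus\mathcal A$, dovetailing two kinds of steps. For each $\str M$-finite $Y\subseteq 2a$: replace the current $X$ by $X\cap Y$ if that stays outside $\mathcal A$ and by $X\setminus Y$ otherwise; since $\mathcal A$ is an ideal and $X\notin\mathcal A$, at least one of the two is outside $\mathcal A$, so the invariant is preserved and in the end $\UF$ is an ultrafilter disjoint from $\mathcal A$. For each pair $(f,k)$, a corrective step aiming at $I^{[f]}\neq I_k$: if there is $d\in I_k$ with $X\cap\{x:f(x)\leq d\}\notin\mathcal A$, replace $X$ by that set (forcing $d\notin I^{[f]}$); failing that, if there is $d\in M-I_k$ with $X\cap\{x:f(x)>d\}\notin\mathcal A$, replace $X$ by that set (forcing $d\in I^{[f]}$). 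Either outcome permanently secures $I^{[f]}\neq I_k$.

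The main obstacle is to show that a corrective step for $(f,k)$ can always be carried out, i.e.\ that at least one of its two alternatives is available. If neither is, then $X\cap\{f\leq d\}\in\mathcal A$ for all $d\in I_k$ and $X\cap\{f>d\}\in\mathcal A$ for all $d\in M-I_k$; unwinding ``being covered by some $A_s$'' shows that the cut $D\defeq\{d\in M:X\cap\{x:f(x)\leq d\}\in\mathcal A\}$ is $\Delta_{n+1}$-definable over $\str M$ with parameters $X,f$, and in the stuck situation it coincides with $I_k$ — and once that coincidence sets in, the ultrafilter steps above tend to force $I^{[f]}$ all the way to $I_k$. The remedy is to give the corrective steps high priority, carrying out the one for $(f,k)$ while $X$ is still large enough that $D\neq I_k$, so that one alternative does apply; checking that this priority arrangement can be sustained simultaneously for all countably many $(f,k)$ — using crucially that the $A_s$ are small, so that shrinking $X$ enlarges $D$ only in a controlled way — is the technical heart of the construction. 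Once $\UF$ is built, (i) and (ii) hold by the translations above, and the remaining properties of $\str N$ are supplied by Theorem~\ref{thm:Paris-ultrapower}.
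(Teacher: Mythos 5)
Your reduction of the problem is correct and matches the paper's: the same ultrapower $\str M\cap M^{2a}/\UF$, the same ideal $\mathcal A$ generated by the stages $A_s$, the same translation of (i) into $\UF\cap\mathcal A=\emptyset$ and of (ii) into ``no class $[f]$ realises the cut $I_k$''. The gap is the one you yourself flag: with only the invariant ``$X\notin\mathcal A$'', a corrective step for $(f,k)$ really can be unavailable (exactly when the $\Delta_{n+1}$-definable cut $D_{X,f}$ coincides with $I_k$), and the ``priority'' remedy you sketch is not carried out and does not obviously succeed, because an ultrafilter step $X\mapsto X\cap Y$ or $X\mapsto X\setminus Y$ can shrink $X$ arbitrarily close to $\mathcal A$, after which the cuts $D_{X,f}$ can jump unpredictably for all $f$ at once.

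The paper closes this gap by replacing the qualitative invariant ``$X\notin\mathcal A$'' with a quantitative one. Call $X$ \emph{large} if $\card{X\cap A_s}<\card X/2$ for all $s$, and \emph{very large} if $\card{X\cap A_s}<\card X/4$ for all $s$. Two claims are then proved. First, every large $X$ has a very large subset: choose $j$ maximal with $\card{X\cap A_s}\geq j\card X/8$ for some $s$, fix such an $s$, and take $Y=X-A_s$; for $t\geq s$ one gets $\card{Y\cap A_t}<\card X/8<\card Y/4$. Second, from a very large $Y$ one can always execute a corrective step: let $z$ be the median of $h$ on $Y$, so each of $\{i\in Y:h(i)\leq z\}$ and $\{i\in Y:h(i)>z-1\}$ has size $\geq\card Y/2$; take the half on the correct side of $I_k$, and very-largeness of $Y$ (the factor $1/4$) guarantees this half is still large. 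Alternating these two claims builds a descending chain of large sets meeting every corrective requirement, hence a \emph{filter} $\mathcal F$ disjoint from $\mathcal A$. Crucially, one then does not build the ultrafilter by hand at all: one simply extends $\mathcal F$ to any ultrafilter disjoint from $\mathcal A$ (such an extension always exists, since a maximal filter disjoint from an ideal is an ultrafilter). This sidesteps the interference between your ``ultrafilter steps'' and ``corrective steps'' entirely, and is the concrete realisation of your informal phrase ``while $X$ is still large enough''.

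So: your strategy and requirement analysis are sound, but the technical heart you identify and defer is not a routine bookkeeping matter — it requires the explicit half/quarter measure invariant and the decoupling of filter construction from ultrafilter completion.
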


\begin{proof}
We will construct an ultrafilter $\UF$ on $\mathcal{P}(2a)\evalin{\str M}$,
  i.e.,~the power set of $2a$ computed in $\str M$,
 so that the coded ultrapower
$$
    \str N \defeq \str M \cap M^{2a} / \UF
$$ has the required properties.

First, apply Limit Lemma~\ref{limit-lemma} to obtain
 a $\Sigma_n$ function $F_0$ approximating~$F$ over $\ind\Sigma_n$
  as in the proof of Lemma~\ref{lem:CSigma-cfx}.
For each $s \in M$, let
$$
    A_s = \{i < 2a: \forall t > s(F_0(i,t) = F_0(i,s))\}\evalin{\str M}.
$$
As in the proof of Lemma \ref{lem:CSigma-cfx},
 each $A_s$ is $\str M$-finite and $\str M\models|A_s|<a$.
Let $\mathcal{A}$ be the ideal generated by $(A_s: s \in M)$
 in $\mathcal{P}(2a)\evalin{\str M}$.
Say an $\str M$-finite set $X$ is \emph{large}
 if and only if $\str M \models \forall s(|X \cap A_s| < |X|/2)$, and
 \emph{very large} if and only if $\str M \models \forall s(|X \cap A_s| < |X|/4)$.

Then we construct a descending sequence $(X_\ell: \ell \in \mathbb{N})$
 of large $\str M$-finite sets
  starting from $X_0 = [0,2a-1]\evalin{\str M}$,
   which is clearly large.
We employ a forcing-style construction
 to ensure,
  for each $I_k$ and each $\str M$-finite $h\colon 2a \to M$,
   the existence of $y\in M$ and $\ell\in\IN$
    such that either
    \begin{itemize}
    \item $y \in I_k$ and $X_\ell \subseteq \{i < 2a: h(i) \leq y\}\evalin{\str M}$; or
    \item $y \not\in I_k$ and $X_\ell \subseteq \{i < 2a: h(i) > y\}\evalin{\str M}$.
    \end{itemize}
The two claims below can be viewed as density properties of
 an appropriate forcing notion.

\begin{claim}\label{clm:WPHP-cfx-density}
Every large $\str M$-finite set has a very large subset.
\end{claim}

\begin{poclaim}
Fix a large $X \in M$ and work in $\str M$. Let
$$
    j = \max\{i < 8: \exists s (|X \cap A_s| \geq i|X|/8)\},
$$
and let $s$ be such that $|X \cap A_s| \geq j|X|/8$.
Let $Y = X - A_s$.
Then, for each $t$,
 the maximality of $j$ implies
 \begin{math}
  \card{X\cap A_t}<(j+1)\card X/8
 \end{math}
 and hence,
  if $t\geq s$, then
  \begin{equation*}
   \card{Y\cap A_t}
   =\card{X\cap A_t}-\card{X\cap A_s}
   <(j+1)\frac{\card X}8-j\frac{\card X}8
   =\frac{\card X}8
   <\frac{\card Y}4
  \end{equation*} by the largeness of $X$.
\end{poclaim}

\begin{claim}\label{clm:WPHP-cfx-density-1}
For each $k \in \mathbb{N}$, each $\str M$-finite $h\colon 2a \to M$ and each very large $\str M$-finite $Y$, there exist a large $\str M$-finite $Z \subseteq Y$ and $y \in M$ such that either $y \in I_k$ and $Z \subseteq \{i < 2a: h(i) \leq y\}\evalin{\str M}$, or $y \not\in I_k$ and $Z \subseteq \{i < 2a: h(i) > y\}\evalin{\str M}$.
\end{claim}

\begin{poclaim}
Work in $\str M$. Since $h \in M$, the range of $h$ is bounded. Define
\begin{equation*}
    z = \min\{x: |\{i \in Y: h(i) \leq x\}| \geq |Y|/2\}.
 \end{equation*}
If $z \in I_k$, then let $y = z$ and $Z = \{i \in Y: h(i) \leq y\}$.
Clearly $|Z| \geq |Y|/2$ in this case.
If $z \not\in I_k$, then let $y = z - 1\in M - I_k$ and $Z = \{i \in Y: h(i) > y\}$.
The minimality of $z$ ensures $|Z| \geq |Y|/2$ in this case too.
As $Y$ is very large,
$$
    |Z \cap A_s| < |Y|/4 \leq |Z|/2
$$
for all $s \in M$. So $Z$ and $y$ are as desired.
\end{poclaim}

By the countability of $M$ and also of $(I_k)$,
 we can inductively apply the above claims to obtain the $X_\ell$'s we want.
As the reader can readily see,
 the largeness of the $X_\ell$'s implies that
  the filter $\mathcal{F}$ generated by $(X_\ell)$ is disjoint from $\mathcal{A}$.
Let $\UF$ be an ultrafilter on $\mathcal{P}(2a)\evalin{\str M}$
 which contains $\mathcal{F}$ and is disjoint from $\mathcal{A}$, and let
$$
  \str N = \str M \cap M^{2a} / \UF.
$$

\begin{claim}\label{clm:WPHP-cfx-cut}
For each $k \in \mathbb{N}$, $\sup_{\str N} I_k = \inf_{\str N} (M - I_k)$.
\end{claim}

\begin{poclaim}
Let $k \in \mathbb{N}$ and $[h] \in N$.
There are $X \in \UF$ and $y \in M$ such that
 either $h(i) \leq y \in I_k$ for all $i \in X$,
     or $h(i) > y > I_k$ for all $i \in X$.
So by \L o\'s's Theorem,
 either $[h] \in \sup_{\str N} I_k$
     or $[h] \not\in \inf_{\str N} (M - I_k)$.
\end{poclaim}

As in the proof of Lemma \ref{lem:CSigma-cfx},
 we know $\str M \elemsub_{\Sigma_{n+1}, \cf}\str N\models\ind\Sigma_n+\exp$ and
  $F\evalin{\str N}$ is undefined at the element
   represented by the identity function on $[0,2a-1]\evalin{\str M}$.
So $\str N$ satisfies all the properties required by the lemma.
\end{proof}

With Lemmata \ref{lem:cut-preserving} and \ref{lem:WPHP-cfx} at hand,
 separating $\Sigma_{n+1}\hyp\WPHP$ and $\bd\Sigma_{n+1}$
  is only a matter of routine iteration.

\begin{theorem}\label{thm:WPHP-BSigma}
Fix $n \in \mathbb{N}$.
Let $\str M$ be a countable model of $\ind\Sigma_n + \exp$ and
 $(I_k: k \in \mathbb{N})$ be a countable family of cuts of $\str M$.
Then there exists $\str N\elemext_{\Sigma_{n+1},\cf}\str M$
  satisfying $\ind\Sigma_n+\exp+\Sigma_{n+1}\hyp\WPHP$
 such that for all $k \in \mathbb{N}$,
  \begin{equation*}
   \sup_{\str N}I_k=\inf_{\str N}(M-I_k).
  \end{equation*}
Hence $\ind\Sigma_n + \exp + \Sigma_{n+1}\hyp\WPHP \nproves \bd\Sigma_{n+1}$.
\end{theorem}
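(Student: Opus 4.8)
The plan is to get the model-extension part by iterating Lemma~\ref{lem:WPHP-cfx}, and then to read off $\ind\Sigma_n+\exp+\Sigma_{n+1}\hyp\WPHP\nproves\bd\Sigma_{n+1}$ from Lemma~\ref{lem:cut-preserving} and Slaman's cut characterization of $\bd\Sigma_{n+1}$. For the extension part I would assume $\str M$ nonstandard (otherwise there is nothing to prove) and build a chain $\str M=\str N_0\elemsub_{\Sigma_{n+1},\cf}\str N_1\elemsub_{\Sigma_{n+1},\cf}\cdots$ of countable models of $\ind\Sigma_n+\exp$, putting $\str N=\bigcup_{j\in\IN}\str N_j$. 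By a standard dovetailing I impose one requirement for each tuple $\langle e,\vec p,a\rangle$ with $a\geq1$, demanding that if the $\Sigma_{n+1}$ formula with code $e$ and parameters $\vec p$ defines an injection $2a\to a$ in the first model in which $\vec p$ and $a$ are present, then this injection is made partial at some later stage. At stage $j$ I apply Lemma~\ref{lem:WPHP-cfx} to $\str N_j$ with the cut family $\bigl(\sup_{\str N_j}I_k\bigr)_{k\in\IN}$ — note each $\sup_{\str N_j}I_k$ genuinely is a cut of $\str N_j$, although $I_k$ itself need not be, since the extensions are only cofinal — together with the injection named by the $j$-th active requirement when there is one, and I set $\str N_{j+1}=\str N_j$ when no requirement is active. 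Since $\ind\Sigma_n$ and $\exp$ are preserved along unions of $\Sigma_{n+1}$-elementary chains, $\str N\models\ind\Sigma_n+\exp$, and $\str N\elemext_{\Sigma_{n+1},\cf}\str M$ by transitivity.

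Next I would check $\str N\models\Sigma_{n+1}\hyp\WPHP$. Suppose some $\Sigma_{n+1}$ formula with parameters defines a total injection $F\colon2a\to a$ in $\str N$, $a\geq1$; its parameters and $a$ lie in some $\str N_j$. Because $\str N_j\elemsub_{\Sigma_{n+1},\cf}\str N$, the assertions that $F$ is single-valued, injective and has range inside $a$ — all $\Pi_{n+1}$ — descend to $\str N_j$ (using that $\elemsub_{\Sigma_{n+1}}$ coincides with $\elemsub_{\Pi_{n+1}}$), and so does totality on the domain: every element of $[0,2a)^{\str N_j}$ is below $2a$ in $\str N$ and there receives a witness, which by $\Sigma_{n+1}$-elementarity is already present in $\str N_j$. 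Hence $F$ is an injection $2a\to a$ in $\str N_j$, and the same argument shows it remains one in every $\str N_{j'}$ with $j'\geq j$; therefore its requirement is active from stage $j$ on and is eventually met, yielding $c<2a$ with $F(c)\undefd$ in some $\str N_\ell$, $\ell>j$. But $F(c)\undefd$ is a $\Pi_{n+1}$ statement with parameters in $\str N_\ell$, so it transfers up to $\str N$, contradicting the totality of $F$ there.

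For the cuts I would prove by induction on $j$ that $\sup_{\str N_j}I_k=\inf_{\str N_j}(M-I_k)$ for all $k$; the case $j=0$ holds because $I_k$ is a cut of $\str M$. For the step, Lemma~\ref{lem:WPHP-cfx} applied with the cut $\hat I_k:=\sup_{\str N_j}I_k$ gives $\sup_{\str N_{j+1}}\hat I_k=\inf_{\str N_{j+1}}(N_j-\hat I_k)$, and an elementary order-theoretic check — using the inductive hypothesis, which identifies $N_j-\hat I_k$ with $\{x\in N_j:\exists z\in M-I_k,\ z\leq x\}$ — shows $\sup_{\str N_{j+1}}I_k=\sup_{\str N_{j+1}}\hat I_k$ and $\inf_{\str N_{j+1}}(M-I_k)=\inf_{\str N_{j+1}}(N_j-\hat I_k)$, so the invariant passes to $j+1$. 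Taking unions: if $x\in N$ lies above every element of $I_k$, choose $j$ with $x\in N_j$; then $x\notin\sup_{\str N_j}I_k=\inf_{\str N_j}(M-I_k)$, so $x$ lies above some element of $M-I_k$, which is exactly the nontrivial inclusion $\inf_{\str N}(M-I_k)\subseteq\sup_{\str N}I_k$. This completes the extension statement. For the non-implication, fix a countable $\str M\models\ind\Sigma_n+\exp+\neg\bd\Sigma_{n+1}$ (such models are well known to exist); by Slaman~\cite{Slaman:2004.BSigma} it carries a $\Delta_{n+1}$-definable proper cut $I$. Applying the extension statement to $\str M$ with the one-term family $(I)$ produces $\str N\elemext_{\Sigma_{n+1},\cf}\str M$ with $\str N\models\ind\Sigma_n+\exp+\Sigma_{n+1}\hyp\WPHP$ and $\sup_{\str N}I=\inf_{\str N}(M-I)$; then Lemma~\ref{lem:cut-preserving} makes $\sup_{\str N}I$ a $\Delta_{n+1}$-definable proper cut of $\str N$, so $\str N\models\neg\bd\Sigma_{n+1}$ by Slaman~\cite{Slaman:2004.BSigma} again, giving $\ind\Sigma_n+\exp+\Sigma_{n+1}\hyp\WPHP\nproves\bd\Sigma_{n+1}$.

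I expect the main obstacle to be the interaction of cofinality with elementarity in the verification that $\Sigma_{n+1}\hyp\WPHP$ survives in the union: since the $\str N_j$ are cofinal, not end, extensions, a counterexample injection in $\str N$ need not a priori restrict to one in a fixed $\str N_j$, and one has to argue carefully that totality, injectivity, single-valuedness and the codomain bound really do descend along a $\Sigma_{n+1}$-elementary cofinal extension, while also organizing the dovetailing so that every injection that ever materializes is eventually targeted at a stage where it is active. Carrying the cut $\sup_{\str N_j}I_k$ rather than $I_k$ itself is the other point that needs attention, though once the order-theoretic lemma behind $\sup_{\str N_{j+1}}I_k=\sup_{\str N_{j+1}}\hat I_k$ and $\inf_{\str N_{j+1}}(M-I_k)=\inf_{\str N_{j+1}}(N_j-\hat I_k)$ is isolated, it is routine.
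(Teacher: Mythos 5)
Your proof is correct and follows essentially the same route as the paper: iterate Lemma~\ref{lem:WPHP-cfx} along a dovetailed chain of $\Sigma_{n+1}$-elementary cofinal extensions, take the union, verify $\Sigma_{n+1}\hyp\WPHP$ by transferring a putative counterexample injection down into some $\str N_j$ and meeting it at a later stage, and read the non-implication off Lemma~\ref{lem:cut-preserving} and Slaman's cut characterization of $\bd\Sigma_{n+1}$. The care you take in carrying $\sup_{\str N_j}I_k$ (a genuine cut of $\str N_j$) rather than $I_k$ itself when re-applying Lemma~\ref{lem:WPHP-cfx}, together with the order-theoretic check that the invariant $\sup_{\str N_j}I_k=\inf_{\str N_j}(M-I_k)$ propagates, makes explicit a bookkeeping step the paper leaves implicit.
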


\begin{proof}
By repeated applications of Lemma \ref{lem:WPHP-cfx},
 obtain a sequence $(\str N_\ell: \ell \in \mathbb{N})$
  such that for all $k,\ell\in\IN$,
\begin{enumerate}
 \item $\str M = \str N_0 \elemsub_{\Sigma_{n+1}, \cf} \str N_\ell \elemsub_{\Sigma_{n+1}, \cf}\str N_{\ell+1} \models \ind\Sigma_n + \exp$;
 \item $\sup_{\str N_\ell} I_k = \inf_{\str N_\ell} (M - I_k)$;
 \item for each $\Sigma_{n+1}$ injection $F\colon2a\to a$ in $\str N_\ell$
   where $a\in N_\ell$,
  there exists $m>\ell$
   such that $F\evalin{\str N_m}$ is undefined
    at some $c\in N_m$ below~$2a$.\label{thm:WPHP-BSigma/3}
\end{enumerate}
Let $\str N = \bigcup_{\ell \in \mathbb{N}}\str N_\ell$.
Then $\str M \elemsub_{\Sigma_{n+1},\cf}\str N$ and
 $\sup_{\str N} I_k = \inf_{\str N} (M - I_k)$ for all $k\in\IN$.
Since $(\str N_\ell)$ is a $\Sigma_{n+1}$-elementary chain,
 the union $\str N$ satisfies
  $\bigcap_{\ell\in\IN}\Pi_{n+3}\hyp\Th(\str N_\ell)\supseteq\ind\Sigma_n+\exp$.

Suppose $a \in N$ and
 $F\colon 2a \to a$ is a $\Sigma_{n+1}\evalin{\str N}$ injection.
Pick a large enough $\ell$ such that $N_\ell$ contains
 $a$ and all the parameters in the definition of $F$.
Then $F\evalin{\str N_\ell}$ is a $\Sigma\evalin{\str N_\ell}_{n+1}$ injection
  $[0,2a-1]\evalin{\str N_\ell}\to[0,a-1]\evalin{\str N_\ell}$
 since $\str N_\ell \elemsub_{\Sigma_{n+1}}\str N$.
So (\ref{thm:WPHP-BSigma/3}) gives $m>\ell$ and $c\in N_m$ below $2a$
 such that $F\evalin{\str N_m}$ is undefined at $c$.
As $\str N_m \elemsub_{\Sigma_{n+1}}\str N$,
 $F\evalin{\str N}$ is undefined at $c$ as well,
  contradicting the assumption on $F$.
This shows that $\str N \models \Sigma_{n+1}\hyp\WPHP$.

If the $\str M$ above does not satisfy $\bd\Sigma_{n+1}$,
 then we can choose $I_0$ to be a proper $\Delta_{n+1}\evalin{\str M}$ cut,
  which ensures $\str N\not\models\bd\Sigma_{n+1}$
   in view of Lemma~\ref{lem:cut-preserving}.
The last part of the theorem follows.
\end{proof}

\begin{remark}
It is apparent that the assumption $\exp$ can be weakened
 in the cofinal extension constructions in this section:
 we only need a theory in which
  we can count the elements of $\Delta_0$-definable sets somehow.
For example, Theorem~\ref{thm:Paris-ultrapower} remains true
 even without~$\exp$ if we replace
  `$\str M$-finite' by `bounded $\Delta_0(\Sigma_n)$-definable' everywhere.
\end{remark}

\section{More about pigeonhole principles}\label{s:further}
As we saw in Sections~\ref{s:wwkl} and~\ref{s:fo-thy},
 the principle $\Sigma_n\hyp\WPHP$ arises naturally
  when one studies the Weak Weak K\"onig Lemma.
Clearly one can obtain a hierarchy of weaker pigeonhole principles
 by similarly changing the domains of the functions involved:
\begin{equation*}
 \begin{tabular}{lclll}
 $\Sigma_n\hyp\PHP(\num+1,\num)$
 &$\defeq$&$\forall a$
  &$\Sigma_n\colon a+1\to (2)^1_a$;\\
 $\Sigma_n\hyp\PHP(2\num,\num)$
 &$\defeq$&$\forall a\geq1$
  &$\Sigma_n\colon 2a\to(2)^1_a$;\\
 $\Sigma_n\hyp\PHP(\num^2,\num)$
 &$\defeq$&$\forall a\geq2$
  &$\Sigma_n\colon a^2\to(2)^1_a$;\\
 $\Sigma_n\hyp\PHP(2^\num,\num)$
 &$\defeq$&$\forall a$
  &$\Sigma_n\colon 2^a\to(2)^1_a$;\\
 \quad\vdots
 &\vdots&
  &\enspace\vdots\\
 $\Sigma_n\hyp\PHP(H(\num),\num)$
 &$\defeq$&$\forall a$
  &$\Sigma_n\colon H(a)\to(2)^1_a$;\\
 \quad\vdots
 &\vdots&
  &\enspace\vdots\\
 $\Sigma_n\hyp\PHP({<}\infty,\num)$
 &$\defeq$&$\forall a\ \exists b$
  &$\Sigma_n\colon b\to(2)^1_a$;\\
 $\Sigma_n\hyp\PHP(\infty,\num)$
 &$\defeq$&$\forall a$
  &$\Sigma_n\colon\infty\to(2)^1_a$.
 \end{tabular}
\end{equation*}
Here $\Sigma_n\hyp\PHP(\num+1,\num)$ is simply
  the usual $\Sigma_n\hyp\PHP$;
 the principle $\Sigma_n\hyp\PHP(2\num,\num)$
  is what we have called $\Sigma_n\hyp\WPHP$; and
 $\Sigma_n\hyp\PHP(\infty,\num)=\Cd\Sigma_n$.
Kaye~\cite[Section~3.2]{art:Th(kappa-like)}
 refers to $\{\Sigma_k\hyp\PHP({<}\infty,\num):k\in\IN\}$
  as a \emph{generalized pigeonhole principle}.

We saw several separation results for this hierarchy
 over $\ind\Delta_0+\exp$ for \emph{positive} $n\in\IN$.
On the one hand, Theorem~\ref{thm:WPHP-BSigma}
 separates $\Sigma_n\hyp\PHP(\num+1,\num)$
  from $\Sigma_n\hyp\PHP(2\num,\num)$.
On the other hand, both constructions in Section~\ref{ss:wphp-csigma}
 can separate $\Sigma_n\hyp\PHP(2\num,\num)$,
  $\Sigma_n\hyp\PHP(\num^2,\num)$, \dots\
  from $\Sigma_n\hyp\PHP({<}\infty,\num)$.
In fact, one can squeeze a little more out of the second construction.
The following improves Theorem~13 in Haken~\cite{phd:haken}.

\begin{theorem}\label{thm:loc-glob}
Let $n\in\IN$.
For any set of $\Sigma_{n+3}$~sentences~$S$
   that is consistent with~$\PA$ and
  any $S$-provably total unary function~$H$
   with a $\Sigma_{n+1}$-definable graph,
 \begin{equation*}
  S+\{\Sigma_k\hyp\PHP({<}\infty,\num):k\in\IN\}
  \nproves\Sigma_{n+1}\hyp\PHP(H(\num),\num).
 \end{equation*}
\end{theorem}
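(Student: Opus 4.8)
The plan is to imitate the second proof of Theorem~\ref{thm:CSigma-WPHP}, making two changes: the base theory $\PA$ is enlarged to $\PA+S$, and the doubling function is replaced by~$H$. As usual it suffices to build one model $\str K$ of $S+\{\Sigma_k\hyp\PHP({<}\infty,\num):k\in\IN\}+\neg\Sigma_{n+1}\hyp\PHP(H(\num),\num)$, i.e.\ a model of $S$ in which the generalized pigeonhole principle holds while some $\Sigma_{n+1}$-definable map injects $H(a)$ into~$a$ for a suitable~$a$.

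\textbf{Construction.} First I would fix a countable nonstandard $\str M\models\PA+S$; one exists since $S$ is consistent with~$\PA$. Pick a nonstandard $a\in M$ and put $h=H(a)\evalin{\str M}$, which is defined because $H$ is $S$-provably total. If $h\leq a$ (in particular whenever $h$ is standard), then the inclusion $[0,h)\hookrightarrow[0,a)$ is a $\Delta_0$-definable injection, so $\str M$ itself --- a model of~$\PA$, hence of the generalized pigeonhole principle and of~$S$ --- already falsifies $\Sigma_{n+1}\hyp\PHP(H(\num),\num)$, and we are done. So assume $h>a$; then $h$ is nonstandard, and $[0,h)\evalin{\str M}$ and $[0,a)\evalin{\str M}$ are both countably infinite. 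Choose an \emph{external} bijection $f\colon[0,h)\evalin{\str M}\to[0,a)\evalin{\str M}$ and let $A=\{\tuple{x,f(x)}:x<h\}$, a countable subset of~$M$. Now apply Theorem~\ref{thm:sing-like} to $T_0=\PA$, $\kappa=\beth_\omega$, the model~$\str M$, and the set~$A$, obtaining a $\beth_\omega$-like end extension $\str M\elemsub_{\Sigma_n,\ee}\str K$ satisfying $\Sigma_{n+3}\hyp\Th(\PA)$ with $A\in\Sigma_{n+1}\hyp\Def(\str K)$. We additionally want $\str K\models S$, which is not automatic since $S$ need not be recursive; but the proof of Theorem~\ref{thm:sing-like} supplies this with little extra work, because its auxiliary parameter may be chosen to encode an $\str M$-finite initial segment of the $\Sigma_{n+3}$-satisfaction diagram of~$\str M$ (available by overspill in $\str M\models\PA$) long enough to cover every sentence of~$S$; this forces $\str K\models\sigma$ for each $\sigma\in S$. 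Thus $\str K\models S+\Sigma_{n+3}\hyp\Th(\PA)$.

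\textbf{Verification.} Since $\str K$ is $\beth_\omega$-like, it satisfies the generalized pigeonhole principle $\{\Sigma_k\hyp\PHP({<}\infty,\num):k\in\IN\}$, a standard property of $\kappa$-like models in the sense of Kaye~\cite[Section~3.2]{art:Th(kappa-like)}. Next, as $\str K$ end-extends $\str M$ and $H$ has $\Sigma_{n+1}$ graph, the fact $H(a)=h$ passes up to~$\str K$ (the existential witness survives in an end extension and $\Sigma_n$-elementarity preserves its $\Pi_n$ matrix), while $\str K\models S$ keeps $H$ single-valued; moreover the set $A$ is unchanged from $\str M$ to~$\str K$ (an end extension adds no new pairs with first coordinate below~$h$). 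Reading off the $\Sigma_{n+1}$ definition of~$A$ in~$\str K$, the map $x\mapsto(\text{the unique }y\text{ with }\tuple{x,y}\in A)$ is a $\Sigma_{n+1}$-definable injection of $[0,h)\evalin{\str K}=[0,H(a))$ into~$[0,a)$. Hence $\str K\models\neg(\Sigma_{n+1}\colon H(a)\to(2)^1_a)$, so $\str K\not\models\Sigma_{n+1}\hyp\PHP(H(\num),\num)$, as required. (There is no conflict with $\str K\models\Sigma_{n+3}\hyp\Th(\PA)$: the instances of $\bd\Sigma_{n+1}$ are $\Pi_{n+4}$, not $\Sigma_{n+3}$, so $\bd\Sigma_{n+1}$ --- which would forbid this injection --- need not hold in~$\str K$.)

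\textbf{Main obstacle.} I expect the delicate part to be the transfer argument just sketched: one must confirm that, along a merely $\Sigma_n$-elementary end extension, the pair structure of~$A$, the value $H(a)$, and the totality of~$H$ all survive, so that the external bijection~$f$ really becomes a genuine $\Sigma_{n+1}$ injection inside~$\str K$, together with checking that the parameter-coding really pins down every member of the non-recursive theory~$S$ in~$\str K$. Neither point is conceptually new --- both refine devices already present in Theorem~\ref{thm:sing-like} and the second proof of Theorem~\ref{thm:CSigma-WPHP} (which improve Haken~\cite[Theorem~13]{phd:haken} in the same way) --- but this is where the bookkeeping has to be done carefully. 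The $\beth_\omega$-likeness yielding the generalized pigeonhole principle, by contrast, is off the shelf.
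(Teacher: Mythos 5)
Your overall plan---pick a model of $\PA+S$, choose a nonstandard $a$, externally biject $H(a)$ onto $a$, and push the graph into a $\beth_\omega$-like extension via Theorem~\ref{thm:sing-like}---is correct in outline, and your verification steps (end extension preserving $[0,H(a))$ and the value $H(a)$, $\beth_\omega$-likeness yielding the generalized pigeonhole principle, the $\Sigma_{n+1}$-definability of $A$ giving the offending injection) all check out.

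The genuine gap is in the handling of a non-recursive~$S$. You correctly notice that applying Theorem~\ref{thm:sing-like} with $T_0=\PA$ gives only $\str K\models\Sigma_{n+3}\hyp\Th(\PA)$, and that $\str K\models S$ must be secured by some other means. But the fix you sketch---``its auxiliary parameter may be chosen to encode an $\str M$-finite initial segment of the $\Sigma_{n+3}$-satisfaction diagram \ldots this forces $\str K\models\sigma$''---is not a matter of ``little extra work'': one cannot simply re-purpose the parameter of a theorem applied as a black box. To make this go through, one has to re-open the proof of Theorem~\ref{thm:sing-like}: introduce fresh constants for the coded truth fragment, for its nonstandard length, and for a coded list of witnesses; add to the recursive theory~$T$ a single sentence asserting that the $\Pi_{n+2}$ matrix of every $\Sigma_{n+3}$ sentence in the fragment holds of the corresponding witness; verify that this new sentence is itself $\Pi_{n+2}$ (so that $T$ stays recursive and so that the Mac~Dowell--Specker/Blanck extension step preserves it); and check in $\str M\models\PA$ that the constants can in fact be interpreted (totality and coding of the witnesses needs $\ind\Sigma_{n+3}$). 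None of this is indicated in your sketch, and ``this forces $\str K\models\sigma$'' leaves the actual transfer mechanism---namely that each $\sigma\in S$ has standard G\"odel number and therefore falls under the coded, preserved truth fragment---entirely implicit. As written, you are claiming a conclusion that Theorem~\ref{thm:sing-like} does not deliver.

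The paper avoids all of this by a compactness reduction that you do not consider: using a universal $\Sigma_{n+1}$ predicate, $\Sigma_{n+1}\hyp\PHP(H(\num),\num)$ is a single sentence over $\ind\Delta_0+\exp+S$, and $\ind\Delta_0+\exp$ is itself finitely axiomatizable. Hence if $S+\{\Sigma_k\hyp\PHP({<}\infty,\num):k\in\IN\}$ proved $\Sigma_{n+1}\hyp\PHP(H(\num),\num)$, some finite $S_0\subseteq S$ (large enough to witness the provable totality of~$H$) would already do so, so one may assume $S$ finite. Then $T_0\defeq\PA+S$ is recursive, Theorem~\ref{thm:sing-like} applies directly, and $\str K\models S$ comes for free from $\str K\models\Sigma_{n+3}\hyp\Th(\PA+S)\supseteq S$. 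This is both shorter and avoids any re-proof of Theorem~\ref{thm:sing-like}. Your route can be patched, but the paper's compactness trick is the clean and intended move, and its absence in your argument is the key missing idea. (A small side note: instances of $\bd\Sigma_{n+1}$ are $\Pi_{n+3}$, not $\Pi_{n+4}$ as you write; this does not affect your conclusion, since they are still not $\Sigma_{n+3}$.)
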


\begin{proof}
Using a universal $\Sigma_{n+1}$~predicate,
 one can finitely axiomatize $\Sigma_{n+1}\hyp\PHP(H(\num),\num)$
  over $\ind\Delta_0+\exp+S$, and
 $\ind\Delta_0+\exp$ itself is well known
  to be finitely axiomatizable~\cite[\S6]{incoll:fragPA+MRDP}.
So we may assume $S$~is finite without loss of generality.
Then run our second proof of Theorem~\ref{thm:CSigma-WPHP},
 changing $T_0$ to $\PA+S$ and $2a$ to~$H(a)$.
\end{proof}

Clearly one can strengthen $\{\Sigma_k\hyp\PHP({<}\infty,\num):k\in\IN\}$
  in Theorem~\ref{thm:loc-glob}
 to any theory satisfied in all $\beth_\omega$-like models
  of $\ind\Delta_0+\exp$.
As shown by Kaye~\cite[Theorem~3.20]{art:Th(kappa-like)},
 such a theory cannot be too strong,
  in the sense that it is always weaker than
  \begin{equation*}
   \IB+\exp\defeq\ind\Delta_0+\exp+\{\ind\Sigma_k\then\bd\Sigma_k:k\in\IN\},
  \end{equation*}
  which is partially conservative over all the usual fragments
   of Peano arithmetic~\cite[Theorem~4.1]{art:Th(kappa-like)}.
In fact, from our proof of Theorem~\ref{thm:loc-glob},
 one sees this implication is strict.
The strictness of this implication,
 which answers a question in Kaye~\cite[Problem~4.3]{art:Th(kappa-like)},
  was first shown by Theodore A. Slaman in around 2011 using a similar method;
   see Haken~\cite[Section~3.3]{phd:haken}.

When $n=0$, the situation is somewhat different:
 as shown by Paris--Wilkie--Woods~\cite[Corollary~2]{art:PWW}
         and Thapen~\cite[Lemma~2.1]{art:model-wphp},
  there is a way to construct
   a counterexample to $\Sigma_0\hyp\PHP(2\num,\num)$ from
   a counterexample to $\Sigma_0\hyp\PHP({<}\infty,\num)$
    in $\ind\Delta_0+\Omega_1$,
 where $\Omega_1$ denotes an axiom asserting
  the totality of $x\mapsto x^{\log x}$ over~$\ind\Delta_0$.
This construction does not work at higher levels of the arithmetic hierarchy
 because apparently one cannot iterate a $\Sigma_n$-definable function
  without increasing the complexity of the defining formula
   when $n\geq1$ and $\bd\Sigma_n$ is absent.
Using a diagonal argument,
 Paris--Wilkie--Woods~\cite[Theorem~1]{art:PWW} showed
  $\ind\Delta_0+\Omega_1\proves\Sigma_0\hyp\PHP(\num^2,\num)$.
So $\ind\Delta_0+\Omega_1\proves\Sigma_0\hyp\PHP(2\num,\num)$ too.
The question whether $\ind\Delta_0+\Omega_1\proves\Sigma_0\hyp\PHP(\num+1,\num)$,
  first raised by Macintyre,
 is a fundamental open question in bounded arithmetic~\cite[Problem~B(c)]{incoll:openprob}.

As observed by Dimitracopoulos and Paris~\cite[Remarks on page~79]{Dimitracopoulos.Paris:1986},
 there is some connection between the $\Sigma_0$ and the $\Sigma_1$~level:
 one can deduce from the Paris--Wilkie--Woods theorem
    in the previous paragraph
   that $\bd\Sigma_1+\Omega_1\proves\Sigma_1\hyp\PHP(2\num,\num)$,
  but the question whether
    $\bd\Sigma_1+\Omega_1\proves\Sigma_1\hyp\PHP(\num+1,\num)$
   is open because it is equivalent to Macintyre's question.

Although $\Sigma_0\hyp\PHP(2\num,\num)$ is known to be strictly weaker
  than $\Sigma_0\hyp\PHP(\num+1,\num)$ in the relativized setting~\cite{art:KPW/exp-lb-php,art:PBI/exp-lb-php},
 we do not yet have an unrelativized separation to date.
In this context, the coded ultrapower construction that
 we used to prove our unrelativized separation
   at higher levels of the arithmetic hierarchy
   (i.e.,~Theorem~\ref{thm:WPHP-BSigma})
  may provide useful information.

Our coded ultrapower constructions in Section~\ref{s:wph-fragments}
 is of independent model-theoretic interest.
Surprisingly little is known about non-elementary cofinal extensions
 of models of arithmetic.
For instance, all such constructions known so far
 make a new collection axiom true in the extension.
Our construction, on the contrary,
 can preserve all failures of collection at the appropriate level.

\begin{question}
Given $n\in\IN$, can one find a model of~$\bd\Sigma_{n+1}$
 with a cofinal extension satisfying $\ind\Delta_0$
  but not~$\bd\Sigma_{n+1}$?
\end{question}

In some sense, one can use
 Lemma~\ref{lem:WPH-basics}\partref{lem:WPH-basics/2} and
 Theorem~\ref{thm:CSigma-WPHP-cfx}
  to characterize $\Sigma_{n+1}\hyp\PHP(2\num,\num)$.

\begin{proposition}
Let $n\in\IN$ and
 $H$~be a provably total unary function in $\ind\Sigma_n+\exp$
  with a $\Sigma_{n+1}$-definable graph.
If
 \begin{math}
  \ind\Sigma_n+\exp+\Sigma_{n+1}\hyp\PHP({<}\infty,\num)
 \end{math}
 proves
 \begin{equation*}
  \fa x{H(x)>x}\quad\text{and}\quad
  \fa r{\fa w{\exge xw{H(x)\geq rx}}},
 \end{equation*}
 then it cannot prove
 \begin{equation*}
  \fa a{\bigl(
   \exge xa{(\Sigma_{n+1}\colon H(x)\to(2)^1_x)}
   \then(\Sigma_{n+1}\colon H(a)\to(2)^1_a)
  \bigr)}.
 \end{equation*}
\end{proposition}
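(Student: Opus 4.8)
The plan is to refute the displayed implication semantically. Write $T=\ind\Sigma_n+\exp+\Sigma_{n+1}\hyp\PHP({<}\infty,\num)$; the goal is to build a model $\str N\models T$ together with an element $a\in N$ at which the implication fails in $\str N$, that is, at which $\str N$ carries a $\Sigma_{n+1}$-definable injection $H(a)\to a$ while still satisfying $\Sigma_{n+1}\colon H(x)\to(2)^1_x$ for some $x\geq a$. Since $T$ proves the two displayed facts about $H$, every model of $T$ --- in particular $\str N$ --- satisfies $\fa x{H(x)>x}$ and $\fa r{\fa w{\exge xw{H(x)\geq rx}}}$, and it is the superlinearity statement that will let us meet the second demand.

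First I would take a countable $\str M_0\models\ind\Sigma_n+\exp$ carrying a $\Delta_{n+1}$-definable injection $g$ from $M_0$ into $[0,a)$ for a suitable $a\in M_0$, obtained by relativizing Groszek--Slaman exactly as in the first proof of Theorem~\ref{thm:CSigma-WPHP}. Using that $H$ is total in $\str M_0$, set $p=\max\{a,\,H^{\str M_0}(a)\}$ and apply Theorem~\ref{thm:CSigma-WPHP-cfx} with parameter $p$: this yields $\str N\elemext_{\Sigma_{n+1},\cf}\str M_0$ with $\str N\models\ind\Sigma_n+\exp$, with $[0,p]\evalin{\str M_0}=[0,p]\evalin{\str N}$, and with $\str N\models\Sigma_{n+1}\colon p^e b\to(2)^1_b$ for all $e\in N-\IN$ and all $b\in N$. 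This last clause already gives $\str N\models\Sigma_{n+1}\hyp\PHP({<}\infty,\num)$: for any $c\in N$, pick any nonstandard $e$ --- available since $\str N\supseteq\str M_0$ is nonstandard --- and observe that $b=p^e c$ witnesses $\ex b{(\Sigma_{n+1}\colon b\to(2)^1_c)}$. Hence $\str N\models T$, so $\str N$ satisfies both facts about $H$.

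Failure of the \emph{consequent} at $a$ is the easy half: the restriction of $g$ to the initial segment below $H^{\str M_0}(a)$ is a total $\Delta_{n+1}$-definable injection $[0,H^{\str M_0}(a))\to[0,a)$ in $\str M_0$, and since both $H^{\str M_0}(a)$ and $a$ are at most $p$, this segment and its codomain survive in $\str N$, so the same formula defines such an injection there --- this transfer is the routine argument already carried out in the first proof of Theorem~\ref{thm:CSigma-WPHP}. As $H$ is provably total and $\str M_0\elemsub_{\Sigma_{n+1}}\str N$, we have $H^{\str N}(a)=H^{\str M_0}(a)$, so $\str N$ has a $\Sigma_{n+1}$ injection $H(a)\to a$; thus $\Sigma_{n+1}\colon H(a)\to(2)^1_a$ fails in $\str N$. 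For the \emph{antecedent} at $a$: fix any $e\in N-\IN$ and invoke the superlinearity of $H$ in $\str N$ with $r=p^e$ and $w=a$ to obtain $x\geq a$ with $H^{\str N}(x)\geq p^e x$. If $\str N$ had a $\Sigma_{n+1}$ injection $H(x)\to x$, its restriction to the initial segment $[0,p^e x)\subseteq[0,H^{\str N}(x))$ would be a $\Sigma_{n+1}$ injection $p^e x\to x$, contradicting the clause $\Sigma_{n+1}\colon p^e b\to(2)^1_b$ of Theorem~\ref{thm:CSigma-WPHP-cfx} with this $e$ and $b=x$. So $\str N\models\Sigma_{n+1}\colon H(x)\to(2)^1_x$ with $x\geq a$; the displayed implication therefore fails in $\str N$ at $a$, and since $\str N\models T$, the theory $T$ does not prove it.

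The one point that needs real care --- and which I would single out as the crux --- is the choice of extension parameter: it must be (at least) $H^{\str M_0}(a)$, not $a$ itself. The counterexamples to $\Sigma_{n+1}\hyp\WPHP$ that Theorem~\ref{thm:CSigma-WPHP-cfx} manufactures are injections of shape $2a\to a$, and when $H$ is superlinear such an injection does \emph{not} restrict to one of shape $H(a)\to a$; one therefore has to force the whole segment $[0,H^{\str M_0}(a))$ through the extension so that the desired injection is simply inherited from $\str M_0$. Everything else is routine bookkeeping: the cofinal $\Sigma_{n+1}$-elementary extension keeps $\Sigma_{n+1}\hyp\PHP$ true at every $p^e$-stretched level; superlinearity pushes $H(x)$ up past one of those levels for a suitable large $x$; restrictions of $\Sigma_{n+1}$ functions to $\str M$-coded initial segments remain $\Sigma_{n+1}$; and the transfer of the $\Delta_{n+1}$ injection across the extension is verbatim the one in the first proof of Theorem~\ref{thm:CSigma-WPHP}.
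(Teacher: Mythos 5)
Your proof is correct and reaches the same final configuration as the paper's, but it gets there by a genuinely different route in the first step. The paper's proof begins by invoking Theorem~\ref{thm:loc-glob} to obtain a countable $\str M\models\ind\Sigma_n+\exp+\neg\Sigma_{n+1}\hyp\PHP(H(\num),\num)$; this brings in the whole Blanck-theorem/$\beth_\omega$-like-models machinery of the second proof of Theorem~\ref{thm:CSigma-WPHP}. You instead start from the first proof's ingredients: a relativized Groszek--Slaman model with a $\Delta_{n+1}$-definable injection $M_0\to[0,a)$, whose restriction to $[0,H(a))$ already yields the desired $\Sigma_{n+1}$ injection $H(a)\to a$. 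After that both arguments coincide: fix $r=\max\{a,H(a)\}$ (your $p$), apply Theorem~\ref{thm:CSigma-WPHP-cfx} at that parameter, and use the resulting scheme $\fa b{(\Sigma_{n+1}\colon r^e b\to(2)^1_b)}$ together with the superlinearity of $H$ to make the antecedent true while the consequent stays false. A minor stylistic difference: the paper argues by contradiction (supposing $T$ proves the implication and deriving $H(x)<H(a)^e x$ for all $x\geq a$, contradicting superlinearity), whereas you directly exhibit the failure in the model; these are the same argument viewed from opposite sides. Your version is the more elementary one, since it avoids Theorem~\ref{thm:loc-glob} and Blanck's flexible formulas entirely and relies only on the ultrapower technology (Theorem~\ref{thm:CSigma-WPHP-cfx}) already needed in both proofs. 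You also correctly identify the crux---the extension parameter must be at least $H^{\str M_0}(a)$, not $a$, so that the domain of the witnessing injection is preserved through the cofinal extension; this is exactly the $r=\max\{a,H(a)\}$ of the paper's proof.
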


\begin{proof}
Use Theorem~\ref{thm:loc-glob} to find a countable
 $\str M\models\ind\Sigma_n+\exp+\neg\Sigma_{n+1}\hyp\PHP(H(\num),\num)$.
Let $a\in M$ such that $\str M\not\models\Sigma_{n+1}\colon H(a)\to(2)^1_a$.
Fix $e\in M-\IN$.
Set $r=\max\{a,H(a)\}$.
Apply Theorem~\ref{thm:CSigma-WPHP-cfx} to find
 $\str N\elemext_{\Sigma_{n+1}}\str M$ satisfying
 \begin{math}
  \ind\Sigma_n+\exp+\fa b{(\Sigma_{n+1}\colon r^eb\to(2)^1_b)}
 \end{math} such that $[0,r]\evalin{\str M}=[0,r]\evalin{\str N}$.
Notice $\str N\not\models\Sigma_{n+1}\colon H(a)\to(2)^1_a$ as a result.
Hence if the first conjunct in the hypothesis of the proposition is true,
  but the conclusion is not,
 then
 \begin{math}
  \str N\models\fage xa{\neg(\Sigma_{n+1}\colon H(x)\to(2)^1_x)}
 \end{math}, and
 so
 \begin{math}
  \str N\models\fage xa{H(x)<H(a)^ex}
 \end{math}.
\end{proof}

Let us modify our second proof of Theorem~\ref{thm:CSigma-WPHP}
 to show a similar characterization for $\Sigma_{n+1}\hyp\PHP(\num^2,\num)$
  in terms of what we call \emph{$\Sigma_{n+1}$-cardinalities of numbers}.

\begin{definition}
Let $n\in\IN$.
If $\str M\models\ind\Delta_0$ and $a\in M$, then
 \begin{equation*}
  \Sigma_n\hyp\Card\evalin{\str M}(a)=\{ b\in M :
   \text{in $\str M$ some $\Sigma_n$-definable injection $b\to a$}
  \}.
 \end{equation*}
\end{definition}

Clearly, the $\Sigma_n\hyp\Card$ of a number is closed downwards and
 always contains the number itself.
In view of the usual set-theoretic convention,
 it is probably more appropriate to define $\Sigma_n\hyp\Card\evalin{\str M}(a)$ to be
 \begin{equation*}
  \{ b\in M :
   \text{in $\str M$ some $\Sigma_n$-definable injection $b+1\to a$}
  \}.
 \end{equation*}
We choose to adopt a slightly different definition
 because (1)~it actually does not make any difference
   in the cases we are interested in, and
  (2)~it makes the next proof neater.

\begin{proposition}\label{prop:in=cl}
Fix $n\in\IN$.
Let $\str M\models\ind\Delta_0$ and $a\in M$.
\begin{enumerate}
\item $a+1\in\Sigma_n\hyp\Card\evalin{\str M}(a)$ if and only if
 $\Sigma_n\hyp\Card\evalin{\str M}(a)$ is closed under $x\mapsto x+1$.
\item $2a\in\Sigma_n\hyp\Card\evalin{\str M}(a)$ if and only if
 $\Sigma_n\hyp\Card\evalin{\str M}(a)$ is closed under $x\mapsto2x$.
\item $a^2\in\Sigma_n\hyp\Card\evalin{\str M}(a)$ if and only if
 $\Sigma_n\hyp\Card\evalin{\str M}(a)$ is closed under $x\mapsto x^2$.
\end{enumerate}
\end{proposition}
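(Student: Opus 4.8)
The plan is to handle the three parts uniformly. In each, the ``if'' direction is essentially free: the identity function is a $\Sigma_n$-definable injection $a\to a$, so $a\in\Sigma_n\hyp\Card\evalin{\str M}(a)$, and closure of $\Sigma_n\hyp\Card\evalin{\str M}(a)$ under $x\mapsto x+1$ (respectively $x\mapsto 2x$, $x\mapsto x^2$), applied to $a$, immediately places $a+1$ (respectively $2a$, $a^2$) in $\Sigma_n\hyp\Card\evalin{\str M}(a)$. So the content lies in the ``only if'' directions, where the idea is always the same: from the hypothesised injection into $a$ out of the ``doubled'' object, together with an arbitrary $b\in\Sigma_n\hyp\Card\evalin{\str M}(a)$, one manufactures an injection into $a$ out of the ``doubled'' version of $b$, by precomposing with a functorial lift of the given injection $b\to a$.

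Concretely, for part~(3): assume $a^2\in\Sigma_n\hyp\Card\evalin{\str M}(a)$ via a $\Sigma_n$-definable injection $g\colon a^2\to a$, let $b\in\Sigma_n\hyp\Card\evalin{\str M}(a)$ via a $\Sigma_n$-definable injection $f\colon b\to a$, and define $h\colon b^2\to a^2$ by writing each $x<b^2$ uniquely as $x=bi+j$ with $i,j<b$ (division with remainder being available in $\ind\Delta_0$) and setting $h(bi+j)=af(i)+f(j)$. Since $f(i),f(j)<a$ this value lies below $a^2$, and injectivity of $f$ forces $h$ to be injective. Then $g\comp h\colon b^2\to a$ is an injection in $\str M$, so $b^2\in\Sigma_n\hyp\Card\evalin{\str M}(a)$. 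Parts~(1) and~(2) run identically, with $h\colon b+1\to a+1$ agreeing with $f$ on $\{v:v<b\}$ and sending $b$ to $a$, respectively $h\colon 2b\to 2a$ sending $bq+r$ to $aq+f(r)$ for $q<2$ and $r<b$; in both cases $h$ is injective and $g\comp h$ is the required injection from $b+1$, respectively $2b$, into $a$.

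The one point deserving care is that $g\comp h$ should again be $\Sigma_n$-definable. Each lift $h$ is defined from $f$ by an explicit $\Delta_0$ recipe, so its graph is $\Sigma_n$ (indeed $\Delta_0$ when $n=0$); and since the range of $h$ is contained in $a^2$ (respectively $a+1$, $2a$), one may express $(g\comp h)(x)=z$ as $\exists y<a^2\,(h(x)=y\wedge g(y)=z)$ with a \emph{bounded} existential quantifier, so closure of $\Sigma_n$ under conjunction and bounded quantification keeps $g\comp h$ within $\Sigma_n$. I expect this bookkeeping---particularly the $n=0$ case, where the added quantifier must stay bounded to remain inside $\Delta_0$---to be the only real thing to verify; there is no genuine obstacle. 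In particular the argument needs neither $\exp$ nor any pigeonhole principle, since the ``cardinalities'' here are honest numbers identified with their initial segments and $\ind\Delta_0$ already furnishes the pairing, Euclidean division, and arithmetic used by the lifts.
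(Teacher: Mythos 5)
Your proof is correct and follows essentially the same route as the paper: both treat the right-to-left directions as immediate and, for the converse, lift a given injection $f\colon b\to a$ to an injection $h$ on the ``doubled'' domain (your $h$ for part~(1) sends $b$ to $a$ rather than reindexing by $+1$ as the paper does, but these are interchangeable) and compose with the hypothesised injection out of $a+1$, $2a$, or $a^2$. The extra remarks on bounded quantification and the $n=0$ case are sound bookkeeping that the paper leaves implicit.
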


\begin{proof}
The right-to-left directions are obvious.
So let us concentrate on the left-to-right directions.
Fix $b\in\Sigma_n\hyp\Card\evalin{\str M}(a)$ and
 a $\Sigma_n$-definable injection $F\colon b\to a$.
\begin{enumerate}
\item Define $F_1\colon b+1\to a+1$ by setting, for each $x<b+1$,
 \begin{equation*}
  F_1(x)=\begin{cases}
   F(x)+1, &\text{if $x<b$;}\\
   0,      &\text{if $x=b$.}
  \end{cases}
 \end{equation*}

\item Define $F_2\colon2b\to2a$ by setting, for each $i<2$ and $v<b$,
 \begin{equation*}
  F_2(ib+v)=ia+F(v).
 \end{equation*}

\item Define $F_3\colon b^2\to a^2$ by setting, for all $u,v<b$,
 \begin{equation*}
  F_3(ub+v)=F(u)\times a+F(v).
 \end{equation*}
\end{enumerate}
Composing $F_j$ with a witness to the left-hand-side condition
 gives the injection we want.
\end{proof}

Although one may not expect that this list of equivalences goes on forever,
 one may expect at least an analogous equivalence for~$x\mapsto 2^x$.
Nevertheless, this extrapolated equivalence is not true,
 as one can deduce from the following theorem
  by Paris and Mills~\cite[Theorem~2]{art:ParisMills}.

\begin{theorem}[Paris--Mills]\label{thm:PM}
Let $\str M_0$ be a countable model of~$\PA$ and
 $I$~be a cut of~$\str M_0$ closed under multiplication.
Then $\str M_0$~has an elementary extension~$\str M$
 in which $\sup_{\str M}I=I$ and
  every interval $[0,b]\evalin{\str M}$ where $b\in M-I$ is uncountable. \qed
\end{theorem}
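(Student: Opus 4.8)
The plan is to build $\str M$ as the union of an elementary chain $(\str M_\xi)_{\xi<\omega_1}$ of countable models, with $\str M_0$ at the bottom and unions taken at limit stages, in such a way that each successor step forces one fresh element below every point lying above~$I$ while introducing nothing new at or below~$I$. Because $I\subseteq M_0$ and closure under multiplication is a condition on elements of $I$ alone, the pair $(\str M_\xi,I)$ inherits every hypothesis of $(\str M_0,I)$ so long as the equation $\sup_{\str M_\xi}I=I$ is maintained; this equation, together with $I$ being closed under successor inside $\str M_0$, also ensures $I$ remains a proper cut of~$\str M_\xi$. As $\str M_0$ is countable and coded ultrapowers of countable models are countable, each $\str M_\xi$ with $\xi<\omega_1$ is countable, while $\str M$ itself has cardinality~$\aleph_1$.

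The work is concentrated in the successor step. Let $\str M_\xi\models\PA$ be countable, $I$ a proper cut of it closed under multiplication, and $\sup_{\str M_\xi}I=I$. Fix some $d\in M_\xi\setminus I$ and put $\str M_{\xi+1}=\str M_\xi\cap M_\xi^{[0,d)}/\UF$, where $\UF$ is an ultrafilter, specified below, on the Boolean algebra $B$ of $\str M_\xi$-coded subsets of~$[0,d)$; by Paris's Theorem~\ref{thm:Paris-ultrapower} applied at every level~$n$ (permissible since $\str M_\xi\models\PA$) we get $\str M_\xi\elemsub\str M_{\xi+1}\models\PA$, regardless of~$\UF$. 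I will demand of $\UF$ that (i)~every $\str M_\xi$-coded partition of $[0,d)$ into at most $c$ blocks with $c\in I$ has a block in~$\UF$; (ii)~$\UF$ is nonprincipal; and (iii)~for each $b\in M_\xi\setminus I$ with $b\leq d$, no residue class modulo~$b$ lies in~$\UF$. From (i) it follows that every $\str M_\xi$-coded map $h\colon[0,d)\to[0,c)$ with $c\in I$ is constant on a set in~$\UF$, so that no point of $\str M_{\xi+1}\setminus M_\xi$ sits at or below~$I$, i.e.\ $\sup_{\str M_{\xi+1}}I=I$; and for each $b\in M_\xi\setminus I$, either $b>d$ and then (ii) makes the class of the identity map a new element below $d<b$, or $b\leq d$ and then (ii) and (iii) make the class of $x\mapsto x\bmod b$ a new element below~$b$.

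To produce such a $\UF$, enumerate the (countably many) $\str M_\xi$-coded partitions of $[0,d)$ whose index lies in~$I$; since this enumeration already includes all two-block partitions $\{Y,[0,d)\setminus Y\}$, the filter we build will decide every coded set and hence be an ultrafilter, and it will contain no singleton. Set $X_0=[0,d)$ and, at stage~$n$, let $X_{n+1}$ be a block of the $n$-th listed partition of maximal size inside~$X_n$. If $c_0,\dots,c_{n-1}\in I$ are the indices dealt with so far, then $\card{X_n}\geq d/(c_0\cdots c_{n-1})$; here is the crucial point: because $I$ is closed under multiplication, every partial product $c_0\cdots c_{n-1}$ again belongs to~$I$ and is therefore strictly below~$d$, so each $X_n$ is nonempty and the generated filter is proper. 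Furthermore $\card{X_n}$ never lies in~$I$, so $X_n$ is never contained in a residue class modulo any $b\notin I$, which gives (iii) at no extra cost. \emph{Thus closure of $I$ under multiplication is exactly what prevents the countably many partial products from overshooting the fixed bound~$d$}, and without it the construction collapses.

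Finally, let $\str M=\bigcup_{\xi<\omega_1}\str M_\xi$. Then $\str M\models\PA$, $\str M_0\elemsub\str M$, and $\sup_{\str M}I=I$ because a witness to its failure would already occur in some~$\str M_\xi$. Given $b\in M\setminus I$, pick $\xi$ with $b\in M_\xi$; for each of the $\aleph_1$ stages $\eta$ with $\xi\leq\eta<\omega_1$ the step $\str M_\eta\to\str M_{\eta+1}$ places a new element below~$b$, and these are pairwise distinct since the one added at stage~$\eta$ lies in $M_{\eta+1}$ but in no earlier model, so $[0,b]\evalin{\str M}$ is uncountable. The point I expect to require the most care is the bookkeeping inside the successor step: simultaneously keeping the filter proper (which is precisely what forces us to diagonalize only against $I$-indexed partitions and to let the $X_n$ shrink slowly), ensuring a new element below every $b\notin I$, and deciding every coded set so that $\UF$ is genuinely an ultrafilter — and then checking that closure of $I$ under multiplication is what makes all of this simultaneously possible.
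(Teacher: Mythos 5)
The paper states this result as a citation to Paris--Mills \cite{art:ParisMills} and gives no proof of its own, so there is nothing internal to compare against; your reconstruction is, however, correct, and it is essentially the standard Paris--Mills argument: iterate $\omega_1$ coded ultrapowers over a chosen gauge $d>I$, build the ultrafilter at each step by shrinking through a decreasing sequence $X_0\supseteq X_1\supseteq\cdots$ that at stage $n$ selects the largest block of the $n$-th $I$-indexed coded partition, and observe that multiplicative closure of $I$ is precisely what keeps $\card{X_n}\geq\lceil d/(c_0\cdots c_{n-1})\rceil$ above $I$ (hence nonempty, nonprincipal, and too large to fit in any residue class mod $b$ for $b\notin I$). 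The one point worth spelling out a touch more is the residue-class bound in your condition (iii): a residue class mod $b$ inside $[0,d)$ has size at most $\lceil d/b\rceil$, and since $b\notin I$ while $2c\in I$ for $c=c_0\cdots c_{n-1}$, one gets $\lceil d/b\rceil<\lceil d/c\rceil\leq\card{X_n}$ because $\lceil d/c\rceil\notin I$ is nonstandard; this is immediate but should be written rather than left as ``at no extra cost.''
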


Although Corollaries~\ref{cor:am>card} and~\ref{cor:mult-char}
  are formulated in terms of~$\PA$,
 it is not hard to see that they remain true
  when $\PA$~is replaced by any recursively axiomatized consistent
   extension of~$\PA$.

\begin{corollary}\label{cor:am>card}
Fix $n\in\IN$ and a countable $\str M_0\models\PA$.
Let $I$~be a cut of~$\str M_0$ closed under multiplication and $a\in I-\IN$.
Then $\str M_0$~has a $\Sigma_{n+1}$-elementary extension
  $\str K\models\Sigma_{n+3}\hyp\Th(\PA)$
 in which
  \begin{equation*}
   \Sigma_{n+1}\hyp\Card\evalin{\str K}(a)
   =\Sigma_{n+2}\hyp\Card\evalin{\str K}(a)
   =\dots=I.
  \end{equation*}
\end{corollary}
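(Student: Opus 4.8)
The plan is to build $\str K$ as an increasing union of a chain of models obtained by repeatedly applying Paris--Mills (Theorem~\ref{thm:PM}), interleaved with the trick from our second proof of Theorem~\ref{thm:CSigma-WPHP}, so as to (a)~make each interval $[0,b]\evalin{\str K}$ with $b\in M-I$ uncountable, and (b)~pin down each $\Sigma_{n+1+k}\hyp\Card\evalin{\str K}(a)$ to be exactly $I$. First I would observe that the nontrivial inclusion is $\Sigma_{n+1}\hyp\Card\evalin{\str K}(a)\subseteq I$; the reverse inclusion $I\subseteq\Sigma_{n+2}\hyp\Card\evalin{\str K}(a)$ is immediate since any $b\in I$ maps injectively into $a\in I-\IN$ just by the identity together with the fact that $I$ is a cut (indeed $b\leq a$), and $\Sigma_{n+1+k}\hyp\Card\subseteq\Sigma_{n+2+k}\hyp\Card$ trivially. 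So the whole content is to \emph{kill} every potential $\Sigma_{n+1}$-definable injection $b\to a$ with $b\notin I$, while simultaneously killing $\Sigma_{n+2}$-ones, etc.

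Next I would isolate the single-step lemma. Starting from a countable $(\str M_j,c)\models T$ (where, as in the proof of Theorem~\ref{thm:sing-like}, $T=\PA+\{\pi_k((\dname c)_k):k\in\IN\}$ is a finite-over-$\PA$ theory coding $\Sigma_{n+3}\hyp\Th(\PA)$, and $c$ codes the witnesses) with a prescribed countable set of ``bad'' data---a $\Sigma_{n+1+k}$-definable injection $F\colon b\to a$ with $b\notin I$, for some $k\leq$ the current stage---I want a $\Sigma_{n+1}$-elementary extension $\str M_{j+1}\models T$ in which that particular $F$ is no longer a total injection on $b$ and in which $\sup I=I$ is preserved. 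This is exactly the situation handled by Lemma~\ref{lem:CSigma-cfx} once one notes that $b\notin I$ forces $b\geq a^e b'$ for some $e\in M-\IN$ and $b'\in M$ (because $I$ is closed under multiplication, so if $b$ exceeded $a$ by only standardly-many applications of ``multiply by $a$'' it would still lie in $I$; hence $b/b'\geq a^e$ for nonstandard $e$, taking $b'$ a suitable divisor or simply $b'=1$): thus $F$ restricted to a block of size $a^e$ is a $\Sigma_{n+1+k}$-injection $a^e\to a$, and Lemma~\ref{lem:CSigma-cfx} (applied at level $n+k$ in place of $n$) produces a cofinal $\Sigma_{n+1+k}$-elementary---in particular $\Sigma_{n+1}$-elementary---extension in which $F$ becomes partial, with $[0,a]$ unchanged. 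Because the Paris ultrapower is cofinal, ``$\sup I=I$'' passes up automatically, and Theorem~\ref{thm:PM} can be reapplied at the next stage to restore uncountability of long intervals.

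Then I would run the bookkeeping: enumerate in a single $\omega$-list all triples $(k,\varphi,\text{parameters})$ where $\varphi$ is $\Sigma_{n+1+k}$ and could define an injection $b\to a$ with $b\notin I$; at stage $j$ handle the $j$-th such triple that is currently ``live'' (i.e.\ whose $b$ lies in the current model and outside $I$), applying the single-step lemma to destroy it, and then alternate with an application of Theorem~\ref{thm:PM} to blow up intervals $[0,b]\evalin{}$, $b\in M-I$, to uncountable size. Take $\str K=\bigcup_j\str M_j$. By construction $\str M_0\elemsub_{\Sigma_{n+1}}\str K$, $\str K\models\bigcap_j\Pi_{n+3}\hyp\Th(\str M_j)\supseteq\Sigma_{n+3}\hyp\Th(\PA)$ (using that the chain is $\Sigma_{n+1}$-elementary and $T$ codes $\Sigma_{n+3}\hyp\Th(\PA)$ via the standard Craig-trick argument from Theorem~\ref{thm:sing-like}), $\sup_{\str K}I=I$, and every $\Sigma_{n+1+k}$-injection witnessed in some $\str M_\ell$ into $a$ with domain outside $I$ has been made partial at a later stage and therefore, by $\Sigma_{n+1+k}$-elementarity up the chain, is partial in $\str K$; hence $\Sigma_{n+1+k}\hyp\Card\evalin{\str K}(a)\subseteq I$ for each $k$, giving the chain of equalities. \textbf{The main obstacle} I expect is bookkeeping consistency across levels: a single extension step must be $\Sigma_{n+1+k}$-elementary to kill a level-$(n+1+k)$ injection while only $\Sigma_{n+1}$-elementarity is guaranteed to propagate the whole diagonalization, and one must check that killing a high-level injection does not resurrect a previously-killed low-level one---resolved by noting that partiality of a $\Sigma_m$-formula is $\Pi_m$, hence preserved under $\Sigma_{m+1}$-elementary (in particular under $\Sigma_{n+1+k}$-elementary, for $k$ as large as needed) extensions, so we simply require the step handling level $n+1+k$ to be $\Sigma_{n+1+k+1}$-elementary, which Lemma~\ref{lem:CSigma-cfx} delivers.
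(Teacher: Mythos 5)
Your proposal takes a fundamentally different route from the paper's, and it has two serious gaps. The first concerns the inclusion $I\subseteq\Sigma_{n+1}\hyp\Card\evalin{\str K}(a)$, which you dismiss as immediate ``since $b\leq a$''. This is false: $I$ closed under multiplication contains $a^2,a^3,\dots$, all strictly larger than $a$, and the identity gives no injection $a^2\to a$. Producing a $\Sigma_{n+1}$-definable injection $b\to a$ for \emph{every} $b\in I$ is the real content of this inclusion, and the paper supplies it by fixing (externally) a bijection $f\colon I\to a$ between two countable sets and invoking Theorem~\ref{thm:sing-like} to make the set $\{\tuple{x,f(x)}:x\in I\}$ $\Sigma_{n+1}$-definable in a $\beth_\omega$-like end extension; Theorem~\ref{thm:sing-like} never appears in your plan.

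The second gap is that your iterated diagonalization via Lemma~\ref{lem:CSigma-cfx} is both unnecessary and, as described, broken. Unnecessary because the paper does no level-by-level diagonalization at all: Paris--Mills (Theorem~\ref{thm:PM}) pushed by L\"owenheim--Skolem to cardinality $\aleph_1$ makes every interval $[0,b]\evalin{\str M}$ with $b\in M-I$ uncountable while $[0,a]$ stays countable, and the single end extension via Theorem~\ref{thm:sing-like} then keeps $\sup_{\str K}I=I$ and only enlarges such intervals, so \emph{no} injection $b\to a$ can exist at any definability level, purely by cardinality --- this kills $\Sigma_{n+1+k}\hyp\Card\evalin{\str K}(a)\subseteq I$ for all $k$ simultaneously. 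Broken because (i) cofinality of the ultrapower does \emph{not} automatically give $\sup_{\str N}I=I$: Lemma~\ref{lem:CSigma-cfx} only fixes $[0,a]$, and new elements can be inserted inside $I$ above $a$, so $I$ need not remain a cut of the union; (ii) Lemma~\ref{lem:CSigma-cfx} requires a countable base model, so it cannot be applied after a Paris--Mills stage has made the model uncountable; (iii) making the stage that kills a $\Sigma_{n+1+k}$-injection merely $\Sigma_{n+1+k+1}$-elementary preserves lower-level $\Pi$-facts but not higher-level partialities established at \emph{earlier} stages, so without a running maximum of levels your later, lower-level steps can undo earlier, higher-level ones.
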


\begin{proof}
Let $\str M$~be an extension of $\str M_0$ given by Theorem~\ref{thm:PM}.
By the L\"owenheim--Skolem Theorem,
 this~$\str M$ can be chosen to have cardinality~$\aleph_1$.
Fix any bijection $f\colon I\to a$.
Apply Theorem~\ref{thm:sing-like} to
 $\kappa=\beth_\omega$
 and
 \begin{math}
  A=\{\tuple{x,f(x)}:x\in I\}
 \end{math} with $T_0=\PA$.
\end{proof}

\begin{corollary}\label{cor:mult-char}
Let $n\in\IN$ and $H$~be a provably total unary function in~$\PA$
 with a $\Sigma_{n+1}$-definable graph.
If $\PA\proves\fa w{\exge xw{H(x)\geq x^k}}$ for all $k\in\IN$,
 then there exist $\str K\models\Sigma_{n+3}\hyp\Th(\PA)$ and $a\in K$
  such that $H(a)\in\Sigma_{n+1}\hyp\Card\evalin{\str K}(a)$
   but $\Sigma_{n+1}\hyp\Card\evalin{\str K}(a)$ is not closed under~$H$.
\end{corollary}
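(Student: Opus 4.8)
The plan is to reduce to Corollary~\ref{cor:am>card}: I will manufacture, inside a suitable countable $\str M_0\models\PA$, a cut~$I$ closed under multiplication, together with a nonstandard $a\in I$ at which~$H$ remains inside~$I$, while~$H$ is forced out of~$I$ at some other point of~$I$; feeding $\str M_0$, $I$ and~$a$ into Corollary~\ref{cor:am>card} then produces the desired~$\str K$.

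For the ground model, note first that by compactness the theory $\PA\cup\{H(c)\geq c^k:k\in\IN\}$, in the language of arithmetic with a new constant~$c$, is consistent: a finite fragment mentions only exponents below some standard~$K$, and applying the hypothesis $\PA\proves\fa w{\exge xw{H(x)\geq x^k}}$ with $k=K$ and $w=2$ gives an $x\geq2$ with $H(x)\geq x^K$, hence $H(x)\geq x^j$ for all $j\leq K$. Let $\str M_0$ be a countable model of this theory and set $b_0=c\evalin{\str M_0}$, so that $H(b_0)\geq b_0^k$ for every $k\in\IN$. Then $b_0$ is nonstandard, since a standard value $H(b_0)$ could not dominate all powers of~$b_0$.

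Next I would locate~$a$ by overspill. In $\str M_0$ the formula $\fale yx{H(y)<b_0}$ holds of every standard~$x$, because then the finitely many values $H(y)$ with $y\leq x$ are all standard and so below the nonstandard~$b_0$; but it fails at $x=b_0$, since $H(b_0)\geq b_0^2>b_0$, and consequently the set it defines contains no element~$\geq b_0$. Overspill in $\str M_0\models\PA$ therefore yields a nonstandard $a<b_0$ with $H(y)<b_0$ for all $y\leq a$, in particular $H(a)<b_0$. Let $I=\{x\in M_0:x<b_0^k\text{ for some }k\in\IN\}$. This is a proper cut of $\str M_0$, since it omits~$H(b_0)$; it is closed under multiplication; and it contains the nonstandard element~$a$ as well as~$H(a)$ and~$b_0$. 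Applying Corollary~\ref{cor:am>card} to $\str M_0$, to this~$I$, and to $a\in I-\IN$ gives $\str K\models\Sigma_{n+3}\hyp\Th(\PA)$ with $\str M_0\elemsub_{\Sigma_{n+1}}\str K$ and $\Sigma_{n+1}\hyp\Card\evalin{\str K}(a)=I$. Since $a$ and~$b_0$ lie in~$M_0$ and the graph of~$H$ is $\Sigma_{n+1}$ (and~$H$ is provably functional, hence functional in~$\str K$), a routine elementarity argument shows that~$H$ takes the same values on~$a$ and~$b_0$ in~$\str K$ as in~$\str M_0$. Therefore $H(a)\in I=\Sigma_{n+1}\hyp\Card\evalin{\str K}(a)$, while $b_0\in\Sigma_{n+1}\hyp\Card\evalin{\str K}(a)$ but $H(b_0)\notin\Sigma_{n+1}\hyp\Card\evalin{\str K}(a)$; so $\Sigma_{n+1}\hyp\Card\evalin{\str K}(a)$ is not closed under~$H$, which is exactly what we need.

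The delicate point is the middle construction, where a single multiplicative cut~$I$ has to serve two opposing purposes at once: containing a nonstandard point~$a$ at which~$H$ stays `small', and being witnessed proper by~$b_0$, a point at which~$H$ overshoots every power of~$b_0$. Producing~$b_0$ is precisely where the quantitative growth assumption on~$H$ is consumed, and the overspill argument just below~$b_0$ is what pins~$H(a)$ back inside~$I$; once these are in place, the passage to~$\str K$ and the verification are only bookkeeping with $\Sigma_{n+1}$-elementarity and Corollary~\ref{cor:am>card}.
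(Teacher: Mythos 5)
Your overall plan matches the paper's: produce a point $b_0$ at which $H$ overshoots every standard power, take $I$ to be the cut generated by the powers of $b_0$, locate a nonstandard $a\in I$ with $H(a)\in I$ as well, and feed $I$ and $a$ into Corollary~\ref{cor:am>card}. The verification at the end (functionality of $H$ in $\str K$ via $\Sigma_{n+3}\hyp\Th(\PA)$, transfer of $H$-values from $\str M_0$ to $\str K$ via $\Sigma_{n+1}$-elementarity) is correct and is the same bookkeeping the paper leaves implicit. But you and the paper navigate to $a$ and $b_0$ in opposite orders, and the order you chose hides a gap.

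You obtain $b_0$ by compactness and then overspill \emph{downward} to find $a<b_0$. Both the claim that $b_0$ must be nonstandard and the claim that $\fale yx{H(y)<b_0}$ holds at every standard $x$ rest on the same unstated premise: that a $\PA$-provably total function with $\Sigma_{n+1}$-definable graph sends standard arguments to standard values in every model of $\PA$. For $n\geq1$ this is not a consequence of the hypotheses, because $\Sigma_{n+1}$ formulas are not absolute: in a nonstandard $\str M_0\models\PA$ the unique $y$ satisfying the graph formula $\phi(\bar m,y)$ can be nonstandard even when $\bar m$ is a standard numeral. So $b_0$ could, as far as your argument shows, be a standard number $\geq 2$ with $H(b_0)$ nonstandard; and even if $b_0$ is nonstandard, some standard $y\leq x$ could have $H(y)\geq b_0$, which would break the downward overspill.

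The paper's proof evades both issues by starting from an \emph{arbitrary} nonstandard $a\in\str M_0$ and overspilling \emph{upward} on the exponent $k$ in $\exge x{\max\{a,H(a)\}}{H(x)\geq x^k}$. The resulting $b\geq\max\{a,H(a)\}$ with $H(b)\geq b^k$ for all standard $k$ is automatically nonstandard and automatically above both $a$ and $H(a)$, so no compactness step is needed and no assumption about $H$ on standard inputs is consumed. Your argument is repairable---for instance by strengthening the compactness theory to force $c$ above every standard numeral and above every value $H$ takes on a standard numeral, or more simply by dropping the compactness step altogether and, inside any countable nonstandard $\str M_0\models\PA$, running the paper's upward overspill---but as written the middle step is not justified.
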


\begin{proof}
Take any nonstandard element~$a$ in a countable model $\str M_0\models\PA$.
By the hypothesis, we know
 $\str M_0\models\exge x{\max\{a,H(a)\}}{H(x)\geq x^k}$ for all $k\in\IN$.
So overspill gives $b\geq\max\{a,H(a)\}$ in~$\str M_0$
 such that $H(b)\geq b^k$ for all $k\in\IN$.
Then apply Corollary~\ref{cor:am>card} to
 $I=\sup_{\str M_0}\{b^k:k\in\IN\}$ to obtain the model~$\str K$ we want.
\end{proof}

Let us conclude with two general questions
 on the strength of weak pigeonhole principles.
Recall that the question
 whether $\ind\Delta_0+\neg\exp+\neg\bd\Sigma_1$ is consistent
  is widely open~\cite[Question~29]{incoll:openprob}.

\begin{question}
Does $\ind\Delta_0+\exp+\Sigma_{n+1}\hyp\PHP(2\num,\num)$
 prove $\ind\Sigma_n$ or $\bd\Sigma_n$ for any $n\geq1$?
\end{question}

\begin{question}
Is $\ind\Delta_0+\neg\exp+\neg\Sigma_1\hyp\PHP(\infty,\num)$ consistent?
\end{question}

\section*{Acknowledgements}
We thank Keita Yokoyama for numerous fruitful discussions
 which led to a simplification of the proof of Theorem~\ref{thm:WPHP-BSigma}
  and to the conception of Proposition~\ref{prop:in=cl}.
We thank Leszek Ko\l odziejczyk for introducing to us
 the references relevant to the $\Delta_0$~pigeonhole principle.
We thank Ali Enayat for bringing to our attention
 Blanck's recent preprint~\cite{unpub:hier-incompl}.

\bibliography{ms}
\bibliographystyle{plain}

\end{document}